\documentclass{amsart}
\usepackage{amssymb,amsmath}
\usepackage{stmaryrd}
\usepackage{yhmath}
\usepackage{latexsym}
\usepackage{amsthm}
\usepackage{amscd}
\usepackage{mathrsfs}
\usepackage[all]{xy}
\usepackage{mathtools}
\usepackage{arydshln}
\usepackage{bbm}
\usepackage{url}

\newcommand{\tu}{\mathrm{tu}}
\newcommand{\tn}{\mathrm{tn}}

\newcommand{\mcO}{\mathcal{O}}

\newcommand{\tPhi}{\widetilde{\Phi}}

\newcommand{\w}{\widetilde}
\renewcommand{\t}{\tilde}
\newcommand{\Z}{\mathbb{Z}}

\newcommand{\R}{\mathbb{R}}
\newcommand{\Q}{\mathbb{Q}}
\newcommand{\C}{\mathbb{C}}

\newcommand{\G}{\mathbf{G}}
\newcommand{\J}{\mathbf{J}}
\newcommand{\B}{\mathcal{B}}
\newcommand{\wG}{\widetilde{\mathbf{G}}}
\newcommand{\bfH}{\mathbf{H}}
\newcommand{\mfc}{\mathfrak{c}}
\newcommand{\mfg}{\mathfrak{g}}
\newcommand{\mfh}{\mathfrak{h}}
\newcommand{\mfi}{\mathfrak{i}}
\newcommand{\mfj}{\mathfrak{j}}

\DeclareMathOperator{\tr}{tr}
\DeclareMathOperator{\tc}{tc}
\DeclareMathOperator{\depth}{depth}

\DeclareMathOperator{\vol}{vol}
\DeclareMathOperator{\GL}{GL}
\DeclareMathOperator{\SO}{SO}
\DeclareMathOperator{\SL}{SL}
\DeclareMathOperator{\Sp}{Sp}

\DeclareMathOperator{\U}{U}
\DeclareMathOperator{\Lie}{Lie}
\DeclareMathOperator{\Nil}{Nil}

\DeclareMathOperator{\nInd}{n-Ind}

\DeclareMathOperator{\End}{End}

\DeclareMathOperator{\Ad}{Ad}

\DeclareMathOperator{\diag}{diag}
\DeclareMathOperator{\Span}{Span}
\DeclareMathOperator{\Out}{Out}

\pagestyle{plain}
 \setlength{\itemsep}{0pt}
 \setcounter{totalnumber}{3}
 \setcounter{topnumber}{1}
 \setcounter{bottomnumber}{3}
 \setcounter{secnumdepth}{3}

\theoremstyle{plain}
\newtheorem{thm}{Theorem}[section]
\newtheorem*{thm*}{Theorem}
\newtheorem{prop}[thm]{Proposition}
\newtheorem{lem}[thm]{Lemma}
\newtheorem{cor}[thm]{Corollary}

\newtheorem{hyp}[thm]{Hypothesis}

\theoremstyle{definition}
\newtheorem{defn}[thm]{Definition}

\theoremstyle{remark}
\newtheorem{rem}[thm]{Remark}
\newtheorem*{claim*}{Claim}

\SelectTips{cm}{11}

\title{Depth preserving property of the local Langlands correspondence for quasi-split classical groups in a large residual characteristic}
\author{Masao Oi}
\address{Graduate School of Mathematical Sciences, 
the University of Tokyo, 3-8-1 Komaba, Meguro-ku, Tokyo 153-8914, Japan.}
\email{masaooi@ms.u-tokyo.ac.jp}

\begin{document}

\begin{abstract}
For a quasi-split classical group over a $p$-adic field with sufficiently large residual characteristic, we prove that the maximum of depth of representations in each $L$-packet equals the depth of the corresponding $L$-parameter.
Furthermore, for quasi-split unitary groups, we show that the depth is constant in each $L$-packet.
The key is an analysis of the endoscopic character relation via harmonic analysis based on the Bruhat--Tits theory.
These results are slight generalizations of a result of Ganapathy and Varma in \cite{MR3709003}.
\end{abstract}

\subjclass[2010]{Primary: 22E50; Secondary: 11F70}
\keywords{local Langlands correspondence, depth of representations, endoscopy}

\maketitle

\section{Introduction}
Let $F$ be a $p$-adic field and $\G$ either a general linear group or a quasi-split classical group over $F$.
We denote the set of equivalence classes of irreducible smooth representations of $G=\G(F)$ by $\Pi(\G)$, and the set of conjugacy classes of $L$-parameters of $\G$ by $\Phi(\G)$.
Then the \textit{local Langlands correspondence for $\G$}, which has been established by Harris--Taylor (general linear groups, \cite{MR1876802}),  Arthur (symplectic or orthogonal groups, \cite{MR3135650}), and Mok (unitary groups, \cite{MR3338302}) gives a natural map from the set $\Pi(\G)$ to the set $\Phi(\G)$ with finite fibers (called $L$-packets).
In other words, the local Langlands correspondence gives a natural partition of the set $\Pi(\G)$ into finite sets parametrized by $L$-parameters:
\[
\Pi(\G)=\bigsqcup_{\phi\in\Phi(\G)} \Pi^{\G}_{\phi}.
\]

It is known that the local Langlands correspondence satisfies a lot of natural properties beyond its characterization.
One example for such a phenomenon is the \textit{depth preserving} property of the local Langlands correspondence for general linear groups.
To be more precise, let $\GL_{N}$ be the general linear group of size $N$.
Recall that every irreducible smooth representation of $\GL_{N}(F)$ has its \textit{depth}, which is a numerical invariant (non-negative rational number) defined by the theory of Moy--Prasad filtrations (\cite{MR1371680}).
Roughly speaking, the depth of a representation express how large subgroups having an invariant part in the representation are (see Definition \ref{defn:repdepth} for the precise definition).
On the other hand, also for an $L$-parameter of $\GL_{N}$, we can define its \textit{depth} by using the upper ramification filtration of the Weil group $W_{F}$ of $F$ (see Definition \ref{defn:pardepth} for the precise definition).
The depth of an $L$-parameter measures how deep the ramification of the $L$-parameter is.
Then it is known that the local Langlands correspondence for $\GL_{N}$ preserves the depth (see, e.g., \cite{MR3579297}).
Note that, when $N=1$, this is nothing but the well-known property of the local class field theory about the correspondence between higher unit groups of $F^{\times}$ and the upper ramification filtration of $W_{F}^{\mathrm{ab}}$.

Therefore it is a natural attempt to investigate the relationship between the depth of representations and that of $L$-parameters under the local Langlands correspondence for other groups.
At present, there is no complete description of the behavior of the depth under the local Langlands correspondence for general groups except for some small groups (see, for example, \cite{MR3618046} for the details).
However, in a recent paper \cite{MR3709003}, Ganapathy and Varma give the following partial answer to this problem:
\begin{thm}[{\cite[Corollary 10.6.4]{MR3709003}}]\label{thm:GV}
Let $\bfH$ be a quasi-split symplectic or special orthogonal group over $F$.
We assume that the residual characteristic is large enough.
Let $\phi$ be a tempered $L$-parameter of $\bfH$, and $\Pi_{\phi}^{\bfH}$ the $L$-packet of $\bfH$ for $\phi$.
Then we have
\[
\max\bigl\{\depth(\pi) \,\big\vert\, \pi \in \Pi_{\phi}^{\bfH}\bigr\}
\leq
\depth(\phi).
\]
\end{thm}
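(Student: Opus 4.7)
The plan is to use Arthur's twisted endoscopic character identity to transport the known depth-preserving property of the local Langlands correspondence for $\GL_N$ over to the classical group $\bfH$. Via the standard embedding $\iota \colon \widehat{\bfH} \hookrightarrow \GL_N(\C)$, the parameter $\phi$ lifts to $\phi^N := \iota \circ \phi$; since $\iota$ is injective, it preserves the upper ramification filtration, so $\depth(\phi^N) = \depth(\phi) =: r$. Arthur's classification attaches to $\phi^N$ a $\theta$-stable irreducible representation $\pi_{\phi^N}$ of $\GL_N(F)$, and the depth-preserving property of the LLC for $\GL_N$ gives $\depth(\pi_{\phi^N}) = r$. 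The $L$-packet $\Pi_\phi^{\bfH}$ is characterized by the twisted endoscopic character identity
\[
\sum_{\pi \in \Pi_\phi^{\bfH}} \Theta_\pi(f) = \Theta^\theta_{\pi_{\phi^N}}(f^N),
\]
where $(f,f^N) \in C_c^\infty(\bfH(F)) \times C_c^\infty(\GL_N(F))$ is a matching pair under $\theta$-twisted endoscopic transfer.

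The strategy is to argue by contradiction, extracting individual depth information by feeding Moy--Prasad idempotents into this identity. Recall that for $f = e_{x,s+} := \vol(H_{x,s+})^{-1}\mathbbm{1}_{H_{x,s+}}$ one has $\Theta_\pi(f) = \dim \pi^{H_{x,s+}} \in \Z_{\geq 0}$. Suppose some $\pi_0 \in \Pi_\phi^\bfH$ had $d_0 := \depth(\pi_0) > r$, and choose $x$ in $\B(\bfH)$ with $\pi_0^{H_{x,d_0+}} \neq 0$, so the $\pi_0$-summand is strictly positive; then by nonnegativity the left-hand side evaluated at $f = e_{x,d_0+}$ is strictly positive. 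To derive a contradiction, one uses $\iota$ to identify $\B(\bfH)$ with the $\theta$-fixed subbuilding of $\B(\GL_N)$ (with matched Moy--Prasad filtrations, valid in large residual characteristic) and produces a matching $f^N$ whose Moy--Prasad support on the $\GL_N$-side is controlled. The fact that $\depth(\pi_{\phi^N}) = r < d_0$ is then translated via the Harish-Chandra--Howe local character expansion of $\Theta^\theta_{\pi_{\phi^N}}$ — whose leading germs are supported by nilpotent orbits of depth at most $r$ — into the vanishing of the twisted trace $\Theta^\theta_{\pi_{\phi^N}}(f^N)$ on these too-deep test functions, contradicting the character identity and yielding $\max_\pi \depth(\pi) \leq r$.

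The hard part will be this last harmonic-analysis step: endoscopic transfer is defined only at the level of matching orbital integrals, never pointwise on functions, so making precise the relation between the Moy--Prasad support of $f$ and that of a matching $f^N$ is the essential technical task — this is precisely the \emph{analysis of the endoscopic character relation via harmonic analysis based on the Bruhat--Tits theory} highlighted in the abstract. Concretely I expect one to descend through the exponential (or Cayley) map to the Lie algebras, where the standard embedding induces a compatible embedding of Lie-algebra Moy--Prasad filtrations, and to invoke Waldspurger's descent together with Fourier-transform arguments to match Moy--Prasad idempotents against their twisted-endoscopic counterparts. The large-residual-characteristic hypothesis enters at every stage: for the exponential map to be well-defined on the relevant filtration steps, for tame descent of endoscopic data under $\iota$, and for the Harish-Chandra--Howe expansion to be valid in the depth range under consideration. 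Once these matchings are established, the positivity of $\dim \pi^{H_{x,s+}}$ for idempotent test functions turns the character identity into the desired uniform depth bound across the packet.
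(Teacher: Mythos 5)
There is a genuine gap, and it lies exactly where you flag ``the hard part.'' Your plan is to feed the idempotent $f^{H}=\vol(H_{x,d_{0}+})^{-1}\mathbbm{1}_{H_{x,d_{0}+}}$ into the character identity, find a matching $f^{N}$ on the $\GL_{N}$ side, and argue that $\Theta^{\theta}_{\pi_{\phi^{N}}}(f^{N})=0$ because $d_{0}>r=\depth(\pi_{\phi^{N}})$ and the ``leading germs are supported by nilpotent orbits of depth at most $r$.'' This vanishing claim is false, and no amount of control on the support of $f^{N}$ will rescue it. The Adler--Korman result (Theorem \ref{thm:t-hom} in this paper) says that $\Theta^{\G}_{\phi,\theta}$ \emph{agrees} with a sum $\sum_{\mcO}c_{\mcO}\,\widehat{\mu_{\mcO}}$ on the region $\mathcal{U}_{r+}$; in particular the expansion is \emph{still valid} on the smaller region $\mathcal{U}_{d_{0}+}$ for $d_{0}>r$. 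Nilpotent orbital integrals are homogeneous under scaling, so ``a nilpotent orbit of depth at most $r$'' is not a meaningful notion, and $\widehat{\mu_{\mcO}}(f)$ is generically nonzero for $f$ supported arbitrarily deeply; for the zero orbit it is essentially $\int f$. Indeed your own chain of equalities forces the conclusion that the twisted trace you want to kill is \emph{positive}: if $f^{H}$ is a transfer of $f^{N}$, then $\Theta^{\theta}_{\pi_{\phi^{N}}}(f^{N})=\Theta^{\bfH}_{\phi}(f^{H})=\sum_{\pi}\vol(H_{x,d_{0}+})^{-1}\dim\pi^{H_{x,d_{0}+}}>0$. So the purported contradiction is with the character identity itself, not with $\depth(\pi_{0})>r$.

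There is also a directionality problem you gloss over: the Langlands--Shelstad--Kottwitz transfer is a map $\mathcal{I}(\w{G})\rightarrow\mathcal{SI}(H)$, from $\GL_{N}$-side functions to $\bfH$-side ones. Starting with $f^{H}$ and ``producing a matching $f^{N}$'' requires surjectivity of this map, and even where surjectivity holds there is no reason a preimage of a Moy--Prasad idempotent has controlled Moy--Prasad support. The paper does prove a Moy--Prasad fundamental lemma pairing $\mathbbm{1}_{G_{x,r}\rtimes\theta}$ with $\mathbbm{1}_{H_{x,r}}$, but \emph{only} for unitary groups where $\G_{\theta}=\bfH$ and semisimple descent coincides with transfer (Theorem \ref{thm:MPFL}); for symplectic and orthogonal $\bfH$ one is in nonstandard endoscopy and no such pairing is established or used. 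Moreover that idempotent pairing is used in Section \ref{sec:min-unitary} for the \emph{lower} bound $\min\depth(\pi)\geq\depth(\phi)$, where positivity of $\dim\pi^{H_{x,r+}}$ helps rather than hurts; you have imported that technique into the upper-bound direction, where its sign works against you. The actual Ganapathy--Varma mechanism, as recalled in the introduction, runs entirely through germ expansions and never tries to produce a vanishing: Adler--Korman gives the character expansion of $\Theta^{\G}_{\phi,\theta}$ on $\mathcal{U}_{\depth(\phi)+}$; compatibility of the transfer with nilpotent germs (Waldspurger) then shows that the stable sum $\Theta^{\bfH}_{\phi}$ agrees with a germ expansion on $H_{\depth(\phi)+}$; and DeBacker's parametrization of nilpotent orbits via Bruhat--Tits theory converts the validity of a germ expansion on $H_{s+}$ into the bound $\max\depth(\pi)\leq s$. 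That last conversion, not a vanishing statement, is the step you are missing.
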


Our main theorem in this paper is the following:
\begin{thm}[Theorem \ref{thm:main-non-temp}]\label{thm:mainintro}
Let $\bfH$ be a quasi-split classical $($namely, symplectic, special orthogonal, or unitary$)$ group over $F$.
We assume that the residual characteristic is large enough $($see Hypothesis $\ref{hyp}$ for the detail$)$.
Let $\phi$ be an $L$-parameter of $\bfH$, and $\Pi_{\phi}^{\bfH}$ the $L$-packet of $\bfH$ for $\phi$.
Then we have
\[
\max\bigl\{\depth(\pi) \,\big\vert\, \pi \in \Pi_{\phi}^{\bfH}\bigr\}
=
\depth(\phi).
\]
\end{thm}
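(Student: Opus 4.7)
The plan is to prove the two inequalities separately, after first reducing from arbitrary $L$-parameters to tempered ones via the Langlands classification. Write $\phi$ in Langlands form as the parameter of a standard module parabolically induced from a tempered parameter $\phi_{\mathbf{M}}$ of a Levi subgroup $\mathbf{M}$, twisted by a strictly positive unramified character, with $\Pi_{\phi}^{\bfH}$ obtained by taking Langlands quotients. Both sides of the desired identity should be preserved under this reduction: on the representation side, Moy--Prasad depth behaves well under parabolic induction and passage to Langlands quotients, so the set of depths in $\Pi_{\phi}^{\bfH}$ coincides with that in $\Pi_{\phi_{\mathbf{M}}}^{\mathbf{M}}$; on the parameter side, unramified twisting does not affect the restriction of the $L$-parameter to the inertia subgroup, so $\depth(\phi) = \depth(\phi_{\mathbf{M}})$. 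The problem therefore reduces to the tempered case on Levi subgroups.

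For the tempered case, the inequality $\max\depth(\pi) \leq \depth(\phi)$ for symplectic and special orthogonal groups is exactly Theorem \ref{thm:GV}. I would extend this to unitary groups by running the Ganapathy--Varma machine with the twisted endoscopic character identity for the standard base change $\bfH \to \mathrm{Res}_{E/F}\GL_N$, where $E/F$ is the quadratic extension defining the unitary group. The crucial input is that, modulo transfer factors, the Moy--Prasad depth of an orbital integral on $\bfH$ matches a comparable depth of a twisted orbital integral on the $\GL_N$ side; combined with depth preservation for $\GL_N$, which gives $\depth(\Pi) = \depth(\phi)$ for the standard base-change lift $\Pi$, this bounds each $\depth(\pi)$ by $\depth(\phi)$. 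For the reverse inequality $\depth(\phi) \leq \max\depth(\pi)$, which is the genuinely new content, I would argue symmetrically: if every $\pi \in \Pi_{\phi}^{\bfH}$ has depth at most $r$, then the stable distribution $\sum_{\pi}\langle\,\cdot\,,\pi\rangle\,\Theta_\pi$ annihilates every test function supported on elements of depth exceeding $r$; via the (twisted) endoscopic character identity this forces the twisted character of $\Pi$ on $\GL_N$ to satisfy the corresponding vanishing, whence $\depth(\Pi) \leq r$ and therefore $\depth(\phi) \leq r$ by $\GL_N$ depth preservation.

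The main obstacle is the Bruhat--Tits-theoretic comparison of Moy--Prasad filtrations across the (twisted) endoscopic transfer, especially in the unitary case where the base change carries a nontrivial Galois twist and the matching of apartments is more delicate. Transfer factors are not a priori of depth zero, so depth-level vanishing on one side need not automatically transfer to depth-level vanishing on the other; the large-residual-characteristic hypothesis is needed precisely to ensure that transfer factors and the tori involved are tame, so that depth information survives the comparison of orbital integrals. Making this precise in both directions---as required for the new reverse inequality---is the technical heart of the argument.
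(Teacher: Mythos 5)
Your reduction from general $L$-parameters to tempered ones via the Langlands classification matches the paper exactly (this is Proposition \ref{prop:parab} together with the proof of Theorem \ref{thm:main-non-temp}), and your sketch of the $\leq$ direction, including the extension of the Ganapathy--Varma machine to the unitary case via the twisted base-change identity, is in line with the paper. You also correctly identify that the technical crux is a Bruhat--Tits comparison of Moy--Prasad data across the twisted endoscopic transfer. However, the argument you propose for the key reverse inequality $\depth(\phi)\leq\max\depth(\pi)$ rests on a false premise. You claim that if every $\pi\in\Pi_\phi^\bfH$ has depth at most $r$, then the stable character annihilates test functions ``supported on elements of depth exceeding $r$''; but the Moy--Prasad depth of $\pi$ carries no such vanishing information for $\Theta_\pi$. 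Depth $\leq r$ only guarantees the existence of a point $x$ with $\pi^{H_{x,r+}}\neq 0$, and in fact $\Theta_\pi(\mathbbm{1}_{H_{x,s}})=\dim\pi^{H_{x,s}}>0$ for all $s\geq r$ at such a point, so no vanishing occurs at all. Moreover, even granting a vanishing of $\Theta_\phi^\bfH$ on some class of $f^H$, the endoscopic character relation only writes $\Theta_{\phi,\theta}^\G(f)$ in terms of $\Theta_\phi^\bfH(f^H)$ for the one transfer $f^H$ of a given $f$; this would not constrain $\Theta_{\phi,\theta}^\G$ on an arbitrary bi-$G_{x,r+}$-invariant $f$.

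The paper's actual mechanism is not a symmetry argument and uses ingredients absent from your proposal. It converts the depth bound on the packet into a statement about the radius of the local character expansion (DeBacker's homogeneity, Theorem \ref{thm:hom}): $\Theta_\phi^\bfH$ agrees near the identity, via the Cayley map, with a combination of $\widehat{\mu_\mcO}$ on $H_{\max\depth(\pi)+}$. Lemma \ref{lem:radii} transports this through the endoscopic relation and the compatibility of semisimple descent with transfer (Proposition \ref{prop:ss-tran}) to a local character expansion of $\Theta_{\phi,\theta}^\G$ with the same radius. To extract the depth of $\pi_\phi^\G$, the paper invokes DeBacker's Bruhat--Tits parametrization of nilpotent orbits to build an explicit bi-$G_{\theta,x,r+}$-invariant test function $\chi_X\circ\mfc^{-1}$ on $G_\theta$ on which the expansion is visibly nonzero; the genuinely new ingredient, Proposition \ref{prop:GV1} and Corollary \ref{cor:MPdesc}, computes the semisimple descent of characteristic functions of Moy--Prasad cosets and thereby lifts this to a bi-$G_{x,r+}$-invariant function on $\widetilde{G}$. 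That lift forces $(\pi_\phi^\G)^{G_{x,r+}}\neq 0$, hence $\depth(\pi_\phi^\G)\leq r$ and then $\depth(\phi)\leq r$ by Theorem \ref{thm:GL}. Your proposal never invokes character expansions, homogeneity, or the nilpotent-orbit parametrization, all of which are load-bearing here, and the semisimple-descent computation you flag as ``the main obstacle'' does not enter your sketched argument at the point where it would actually be needed.
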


From now on, let $\bfH$ be a quasi-split classical group over $F$.
Before we explain the sketch of our proof, we recall the \textit{endoscopic character relation}, which is used to formulate the naturality of the local Langlands correspondence for $\bfH$.
First, we can regard $\bfH$ as an \textit{endoscopic group} of a (twisted) general linear group $\GL_{N}$ over $F$ (strictly speaking, when $\bfH$ is a unitary group, we have to consider the Weil restriction of $\GL_{N}$ with respect to a quadratic extension associated to $\bfH$).
In particular, we have an embedding $\iota$ from the $L$-group of $\bfH$ to that of $\GL_{N}$.
Here the size $N$ of the general linear group depends on each classical group (see Section \ref{sec:pre}.1 for details).
Now let us take an $L$-parameter $\phi$ of $\bfH$.
By the theory of Langlands classification, we can extend the local Langlands correspondence for tempered representations to nontempered representations formally.
Therefore, to consider the naturality of the local Langlands correspondence, we may assume that $\phi$ is tempered.
Then, by noting that $\phi$ is a homomorphism from $W_{F}\times\SL_{2}(\C)$ to ${}^{L}\bfH$, we obtain an $L$-parameter of $\GL_{N}$ by composing $\phi$ with the embedding $\iota$.
From these $L$-parameters, we get representations of two different groups.
One is the representation $\pi_{\phi}^{\GL_{N}}$ of $\GL_{N}(F)$ corresponding to $\iota\circ\phi$ under the local Langlands correspondence for $\GL_{N}$ (note that, for $\GL_{N}$, each $L$-packet is a singleton).
The other is an $L$-packet $\Pi_{\phi}^{\bfH}$, which is a finite set of representations of $H$, corresponding to $\phi$ under the local Langlands correspondence for $\bfH$.
\[
\xymatrix{
\Pi(\GL_{N}) \ni \pi_{\phi}^{\GL_{N}}& \ar@{<~>}[r]^-{\text{LLC for $\GL_{N}$}} &&& {}^{L}\!\GL_{N}\\
\Pi(\bfH) \supseteq \Pi_{\phi}^{\bfH}\ar@{~>}[u]^-{\text{endoscopic lifting}}& \ar@{<~>}[r]^-{\text{LLC for $\bfH$}} &&W_F\times\SL_2(\C) \ar[r]_-{\phi} \ar[ru]^-{\iota\circ\phi} & {}^{L}\bfH \ar@{^{(}->}[u]_-{\iota} \\
}
\]
In this situation, we say that $\pi_{\phi}^{\GL_{N}}$ is the \textit{endoscopic lift} of $\Pi_{\phi}^{\bfH}$ from $\bfH$ to $\GL_{N}$.
Then the endoscopic character relation is the following equality satisfied by the \textit{twisted character} $\Theta_{\phi,\theta}^{\GL_{N}}$ of $\pi_{\phi}^{\GL_{N}}$ and the \textit{characters} $\Theta_{\pi}$ of representations $\pi$ belonging to $\Pi_{\phi}^{\bfH}$:
\[
\Theta_{\phi,\theta}^{\GL_{N}}(f)
=\sum_{\pi\in\Pi_{\phi}^{\G}}\Theta_{\pi}(f^{H}).
\]
Here $f$ is any test function of $\GL_{N}(F)$ and $f^{H}$ is its Langlands--Shelstad--Kottwitz transfer to $H$ (see Section \ref{sec:pre}.2 for the details).
The important point is that the composition with the $L$-embedding does not change the depth of $L$-parameters.
Namely, by this formulation of the naturality of the local Langlands correspondence for $\bfH$ and the depth preserving property of the local Langlands correspondence for $\GL_{N}$, the depth preserving problem of the local Langlands correspondence for $\bfH$ is equivalent to that of the endoscopic lifting from $\bfH$ to $\GL_{N}$.
We tackle the latter problem by investigating the endoscopic character relations via harmonic analysis on $p$-adic reductive groups.

To explain the strategy of our proof of Theorem \ref{thm:mainintro}, we recall Ganapathy--Varma's method used in the proof of Theorem \ref{thm:GV} in \cite{MR3709003}.
The key tools in their proof are the following DeBacker's two results:
\begin{enumerate}
\item
Description of the radii of the character expansions of irreducible smooth representations (``homogeneity'', established in \cite{MR1914003}).
\item
Parametrization of nilpotent orbits via Bruhat--Tits theory (established in \cite{MR1935848}).
\end{enumerate}

Let us recall them.
First, for every irreducible smooth representation $\pi$ of $H$, we have its \textit{character} $\Theta_{\pi}$, which is an invariant distribution on $H$.
In general, it is very complicated and difficult to describe the behavior of the character $\Theta_{\pi}$. 
However, in some ``small neighborhood'' of the origin, we can express the character $\Theta_{\pi}$ as a linear combination of the nilpotent orbital integrals of Fourier transforms.
More precisely, if we have an appropriate exponential map $\mfc_{\bfH}$ from the Lie algebra $\mfh$ to $H$, then, for every function $f$ on the Lie algebra supported in the ``small neighborhood'' of the origin, we have
\[
\Theta_{\pi}(f\circ\mfc_{\bfH}^{-1})
=
\sum_{\mathcal{O}\in\Nil(\mfh)}c_{\mathcal{O}}\cdot\widehat{\mu_{\mathcal{O}}}(f)
\]
(this is called the \textit{character expansion} of the character of a representation, and established by Harish-Chandra (\cite{MR1702257})).
Then the following question about this character expansion naturally arises: what is the optimal size of the ``small neighborhood''?
In \cite{MR1914003}, DeBacker gave an answer to this question by using the Bruhat--Tits theory.
To be more precise, we put $r$ to be the depth of an irreducible smooth representation $\pi$ and $H_{r+}$ to be the union of $(r+)$-th Moy--Prasad filtrations of parahoric subgroups (see Section \ref{sec:pre}.5 for details).
Then DeBacker proved that the character expansion is valid on $\mfc_{\bfH}^{-1}(H_{r+})$ under some assumptions on the residual characteristic $p$ (see Section \ref{sec:max}.2 for the detail of the assumption). 
On the other hand, in another paper \cite{MR1935848}, DeBacker established a parametrization of nilpotent orbits via Bruhat--Tits theory under some assumptions on the residual characteristic.
By using this parametrization, we can recover the depth of an irreducible smooth representation from the radius of its character expansion.
Namely we can show that if $\Theta_{\pi}$ has a character expansion on $\mfc_{\bfH}^{-1}(H_{s+})$ for some positive number $s\in\R$, then the depth of $\pi$ is not greater than $s$.
In other words, we can say that the depth of an irreducible smooth representation gives an optimal radius of the character expansion.

On the other hand, for twisted characters of irreducible smooth representations, the theory of the character expansion can be formulated as follows:
for every function $f$ on the Lie algebra of $\GL_{N}$ supported in the ``small neighborhood'' of the origin, we have
\[
\Theta_{\phi,\theta}^{\GL_{N}}(f\circ\mfc^{-1})
=
\sum_{\mathcal{O}\in\Nil(\mfg_{\theta})}c_{\mathcal{O}}\cdot\widehat{\mu_{\mathcal{O}}}(f_{\theta}).
\]
Here $\mfc$ is a kind of exponential map (see Section \ref{sec:pre}.4), $\mfg_{\theta}$ is the Lie algebra of the group $\G_{\theta}$ which is the fixed part of an involution $\theta$ of $\G$ (see Section \ref{sec:pre}.1), and $f_{\theta}$ is a function on $\mfg_{\theta}$ which is a \textit{semisimple descent} of $f$ (see Section \ref{sec:descent}).
For this expansion of twisted characters, in \cite{MR2306039}, Adler and Korman established a result which is analogous to that of DeBacker under some assumptions on the residual characteristic of the same type as DeBacker's one.
Namely, they described the size of the ``small neighborhood'' where the character expansion is valid in terms of the depth of the representations.

Now we recall Ganapathy--Varma's method.
Their idea is to compare the depth of a tempered $L$-packet $\Pi_{\phi}^{\bfH}$ and its endoscopic lift $\pi_{\phi}^{\GL_{N}}$ by comparing the radii of the character expansions for $\Pi_{\phi}^{\bfH}$ and $\pi_{\phi}^{\GL_{N}}$ via the endoscopic character relation.
Under the assumption that the residual characteristic is large enough to satisfy the assumptions of DeBacker's results and Adler--Korman's result, Ganapathy and Varma proved Theorem \ref{thm:GV} in the following way:
\begin{enumerate}
\item
The radius of the character expansion of the twisted character $\Theta_{\phi,\theta}^{\GL_{N}}$ of $\pi_{\phi}^{\GL_{N}}$ is given by $\depth(\pi_{\phi}^{\GL_{N}})+$ (Adler--Korman's result).
\item
By using the endoscopic character relation, we know that the maximum of the radii of the character expansions of the characters of representations belonging to $\Pi_{\phi}^{\bfH}$ is smaller than $\depth(\pi_{\phi}^{\GL_{N}})+$.
\item
By using DeBacker's parametrization of the nilpotent orbits, we can conclude that the maximum of the depth of representations belonging to $\Pi_{\phi}^{\bfH}$ is smaller than $\depth(\pi_{\phi}^{\GL_{N}})+$.
\end{enumerate}

Then it is natural to consider the converse direction of this argument by swapping the roles of $\GL_{N}$ and $\bfH$, that is:
\begin{enumerate}
\item[$(1)'$]
The maximum of radii of the character expansions of the characters of representations $\pi$ belonging to $\Pi_{\phi}^{\bfH}$ is given by the maximum of $\depth(\pi)+$ (DeBacker's result).
\item[$(2)'$]
By using the endoscopic character relation, we know that the radii of the character expansions of the twisted characters $\Theta_{\phi,\theta}^{\GL_{N}}$ of $\pi_{\phi}^{\GL_{N}}$ is smaller than $\max\{\depth(\pi)+\}$.
\item[$(3)'$]
By using DeBacker's parametrization of the nilpotent orbits, we conclude that the depth of $\pi_{\phi}^{\GL_{N}}$ is smaller than $\max\{\depth(\pi)+\}$.
\end{enumerate}
However, we can not so immediately imitate Ganapathy--Varma's arguments.
The problem is in the step $(3)'$. 
That is, the behavior of the characteristic functions of the Moy--Prasad filtrations of parahoric subgroups under the semisimple descent is not so clear.

In this paper, in order to carry out the step $(3)'$, we investigate the semisimple descents for the characteristic functions of the Moy--Prasad filtrations of parahoric subgroups of general linear groups by a group-theoretic computation.
Then, as a consequence of such a computation, we can complete the above arguments of the converse direction and get the following converse inequality:
\[
\max\bigl\{\depth(\pi) \,\big\vert\, \pi \in \Pi_{\phi}^{\bfH}\bigr\}
\geq
\depth(\phi).
\]
In particular, by combining this with Theorem \ref{thm:GV}, we get the equality (Theorem \ref{thm:mainintro}).

When $\bfH$ is a unitary group $\U_{E/F}(N)$ associated to a quadratic extension $E$ of $F$, the semisimple descent coincides with the Langlands--Shelstad--Kottwitz transfer.
Thus, by the above computation of the semisimple descents for the characteristic functions of the Moy--Prasad filtrations, we get the following generalization of the \textit{fundamental lemma} to positive depth direction:
\begin{thm}[Theorem \ref{thm:MPFL}]\label{thm:MPFLintro}
We assume that the residual characteristic $p$ is not equal to $2$.
We take a point $x$ of the Bruhat--Tits building of $\bfH$ and we identify it with a point of the Bruhat--Tits building of $\G=\mathrm{Res_{E/F}}\GL_{N}$ canonically.
Let $r\in\R_{>0}$.
Let $H_{x,r}$ and $G_{x,r}$ be the $r$-th Moy--Prasad filtrations with respect to the point $x$.
Then $\vol(H_{x,r})^{-1}\mathbbm{1}_{H_{x,r}}\in C_{c}^{\infty}(H)$ is a transfer of $\vol(G_{x,r})^{-1}\mathbbm{1}_{G_{x,r}}\in C_{c}^{\infty}(G)$.
\end{thm}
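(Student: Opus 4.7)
The plan is to verify the transfer identity by an explicit computation of the semisimple descent of $\mathbbm{1}_{G_{x,r}}$, exploiting the observation recalled in the introduction that for the pair $(\G,\bfH)=(\mathrm{Res}_{E/F}\GL_{N},\U_{E/F}(N))$ the semisimple descent coincides with the Langlands--Shelstad--Kottwitz transfer. I would first transport the statement to the Lie algebra via the exponential-type map $\mfc$ of Section~\ref{sec:pre}.4. Because $p\neq 2$ and $r>0$, $\mfc$ is a measure-preserving, $\theta$-equivariant homeomorphism between $\mfg_{x,r}$ and $G_{x,r}$, restricting to one between $\mfh_{x,r}$ and $H_{x,r}$. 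Hence it suffices to show that the semisimple descent $(\mathbbm{1}_{\mfg_{x,r}})_{\theta}$ on $\mfh$ equals $\vol(\mfp_{x,r})\cdot\mathbbm{1}_{\mfh_{x,r}}$, where $\mfp$ denotes the $(-1)$-eigenspace of $d\theta$ on $\mfg$ and $\mfp_{x,r}:=\mfg_{x,r}\cap\mfp$.

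The core of the argument is an eigenspace splitting of the Moy--Prasad lattice: since $x\in\mathcal{B}(\bfH)\subseteq\mathcal{B}(\G)^{\theta}$, the lattice $\mfg_{x,r}$ is $d\theta$-stable, and because $p\neq 2$ one has the direct sum decomposition
\[
\mfg_{x,r}=\mfh_{x,r}\oplus\mfp_{x,r}.
\]
I would verify this by choosing a maximal $F$-torus $\mathbf{T}$ of $\bfH$, viewing its centralizer in $\G$ as a $\theta$-stable maximal $F$-torus of $\G$, and observing that the associated affine root spaces break into $\pm 1$-eigenspaces of $d\theta$; summing over affine roots with value $\geq r$ gives the splitting. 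Given this decomposition, the definition of semisimple descent as integration of $f$ along the $\mfp$-direction through each point of $\mfh$ immediately yields $(\mathbbm{1}_{\mfg_{x,r}})_{\theta}=\vol(\mfp_{x,r})\cdot\mathbbm{1}_{\mfh_{x,r}}$, and the additivity of Haar measure gives $\vol(G_{x,r})=\vol(H_{x,r})\cdot\vol(\mfp_{x,r})$, so that normalizing by $\vol(G_{x,r})^{-1}$ on the $\G$-side and $\vol(H_{x,r})^{-1}$ on the $\bfH$-side produces exactly the desired transfer.

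The main obstacle lies in the eigenspace splitting in the ramified case of $E/F$, where $x$ can be a non-hyperspecial vertex of $\mathcal{B}(\bfH)$ and the affine root system of $\G$ relative to a $\theta$-stable maximal $F$-torus is more delicate to describe. I would reduce this to a direct matrix calculation on the self-dual lattice chains in $E^{N}$ whose stabilizers realize the parahoric subgroups attached to $x$, using $p\neq 2$ to split each such lattice into its Hermitian and anti-Hermitian parts. A secondary point is to check that the semisimple descent formula invoked above (integration along $\mfp$ with trivial Jacobian) is valid at positive depth; this is automatic because the addition map $(Y,Z)\mapsto Y+Z$ from $\mfh\times\mfp$ to $\mfg$ is $F$-linear, so no nontrivial Jacobian correction appears near the origin.
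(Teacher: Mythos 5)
The central step in your plan---computing the semisimple descent of $\mathbbm{1}_{\mfg_{x,r}}$ by ``integration of $f$ along the $(-1)$-eigenspace direction through each point of $\mfh$''---is not what semisimple descent is, and this is precisely where the real content of the theorem lies. Semisimple descent here is defined by the matching of normalized orbital integrals, $I_{\gamma}(f_{\theta})=I_{\gamma\rtimes\theta}(f)$, not by a fiber integration. What is true to first order is that twisted conjugation $\delta\mapsto g\delta\theta(g)^{-1}$ moves $\delta$ in the $d\theta=-1$ direction, so the linear splitting $\mfg_{x,r}=\mfg_{\theta,x,r}\oplus\mfg_{x,r}^{d\theta=-1}$ is the correct infinitesimal picture; but the genuine input is the group-level statement $\tc(G_{x,r},G_{\theta,x,r})=G_{x,r}\rtimes\theta$, i.e.\ that the twisted $G_{x,r}$-conjugates of $H_{x,r}$ sweep out \emph{exactly} $G_{x,r}$ with nothing left over. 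This is Proposition~\ref{prop:GV1}, whose proof is a nontrivial induction controlling the higher-order terms $\mfg_{x,r}^{k}$ of the Cayley series using Lemma~\ref{lem:alcove}. Twisted conjugation is not translation, so the eigenspace decomposition of the lattice does not by itself give this; your ``trivial Jacobian'' remark applies to the $F$-linear addition map $\mfh\times\mfg^{d\theta=-1}\to\mfg$, which is not the map whose image you actually need to control. With Proposition~\ref{prop:GV1} in hand, the descent is then computed by Lemma~\ref{lem:GV2}, which itself rests on the matching of Weyl discriminants and on Kottwitz's descent lemma rather than on a Jacobian-free change of variables.

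A second, smaller gap: passing from semisimple descent to Langlands--Shelstad--Kottwitz transfer requires Proposition~\ref{prop:ss-tran}, which reconciles the map $\mfc$ used on the $\G$-side with the \emph{different} exponential-type map $\mfc'$ used on the $\bfH$-side. The paper handles this by observing that $h\mapsto h^{2}$ is a bijection on $H_{x,r}$ when $p\neq 2$, so that $\mathbbm{1}_{H_{x,r}}\circ\mfc=\mathbbm{1}_{H_{x,r}}\circ\mfc'=\mathbbm{1}_{\mfh_{x,r}}$. Your plan transports both sides via $\mfc$ only, and so does not address this discrepancy. That said, your normalizing constant and final formula are correct, and your root-space verification of $\mfg_{x,r}=\mfg_{\theta,x,r}\oplus\mfg_{x,r}^{d\theta=-1}$ is a reasonable alternative to Proposition~\ref{prop:twistMP}; the gap is only in what you do with that splitting.
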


We remark that a similar assertion for $r=0$ (namely, the fundamental lemma for parahoric subgroups) in the case where $E$ is unramified over $F$ are proved in \cite{MR868140} (see also \cite{MR2553880}).
This theorem is not only interesting itself, but also having an application to the depth preserving problem of the endoscopic lifting.
We can immediately deduce the following theorem from Theorem \ref{thm:MPFLintro} by using the endoscopic character relation:
\begin{thm}[Theorem \ref{thm:main2-non-temp}]
We assume that the residual characteristic $p$ is not equal to $2$.
Let $\phi$ be an $L$-parameter of $\bfH$, and $\Pi_{\phi}^{\bfH}$ the $L$-packet of $\bfH$ for $\phi$.
Then we have
\[
\min\bigl\{\depth(\pi) \,\big\vert\, \pi \in \Pi_{\phi}^{\bfH}\bigr\}
\geq
\depth(\phi).
\]
In particular, by combining it with Theorem $\ref{thm:GV}$, we have
\[
\depth(\pi)=\depth(\phi)
\]
for every $\pi\in\Pi_{\phi}^{\bfH}$ under the assumption that the residual characteristic $p$ is large enough $($see Hypothesis $\ref{hyp}$ for the detail$)$.
\end{thm}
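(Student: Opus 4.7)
The plan is to combine the endoscopic character relation with Theorem~\ref{thm:MPFLintro} (the positive-depth fundamental lemma), using characteristic functions of Moy--Prasad filtrations as test functions, and then to extract the lower bound on depth from a vanishing of twisted characters on the $\GL_N$ side. By Langlands classification one reduces to the case where $\phi$ is tempered, since the depth of an $L$-parameter and the depth of members of its $L$-packet are both controlled by the tempered data on the Levi subgroup; so assume from now on that $\phi$ is tempered and set $d := \depth(\phi)$. The $L$-embedding $\iota \colon {}^L\bfH \hookrightarrow {}^L\G$ preserves the depth of $L$-parameters, and the depth-preserving local Langlands correspondence for $\GL_N$ then yields $\depth(\pi_\phi^{\GL_N}) = d$. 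The case $d = 0$ is trivial, so assume $d > 0$.

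Fix $r \in (0, d)$ and a point $x$ in the Bruhat--Tits building of $\bfH$, which we identify with a $\theta$-fixed point of the building of $\G = \mathrm{Res}_{E/F}\GL_N$. Taking $r' > r$ to be the smallest filtration-jump value strictly above $r$ (so that $H_{x,r'} = H_{x,r+}$ and $G_{x,r'} = G_{x,r+}$) and applying Theorem~\ref{thm:MPFLintro}, the function $\vol(H_{x,r+})^{-1}\mathbbm{1}_{H_{x,r+}}$ is a Langlands--Shelstad--Kottwitz transfer of $f := \vol(G_{x,r+})^{-1}\mathbbm{1}_{G_{x,r+}}$. The endoscopic character relation then reads
\[
\Theta_{\phi,\theta}^{\GL_N}(f) \;=\; \sum_{\pi \in \Pi_\phi^\bfH} \dim\bigl(\pi^{H_{x,r+}}\bigr).
\]

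Since $G_{x,r+}$ is $\theta$-stable, the left-hand side equals $\tr\bigl(\theta \mid (\pi_\phi^{\GL_N})^{G_{x,r+}}\bigr)$, which vanishes because $r < d = \depth(\pi_\phi^{\GL_N})$ forces $(\pi_\phi^{\GL_N})^{G_{x,r+}} = 0$. All the summands on the right are nonnegative integers, so each must vanish, proving $\pi^{H_{x,r+}} = 0$ for every $\pi \in \Pi_\phi^\bfH$. Letting $x$ range over the building and $r \nearrow d$ yields $\depth(\pi) \geq d$ for every $\pi$ in the packet, which is the desired lower bound; the final equality in the theorem then follows by combining with Theorem~\ref{thm:GV}.

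The genuinely new ingredient is Theorem~\ref{thm:MPFLintro} itself; once that is in hand, the argument above is nearly formal. The subtlety requiring the most care is the Langlands-classification reduction to the tempered case, where one must verify that the depths of both the $L$-parameter and of every constituent of the associated standard module are governed by the tempered data on the Levi, together with the book-keeping ensuring that the identification of buildings and the $\theta$-stability of the relevant Moy--Prasad subgroups are compatible with the twisted trace formula $\Theta_{\phi,\theta}^{\GL_N}(\vol(K)^{-1}\mathbbm{1}_K) = \tr(\theta \mid (\pi_\phi^{\GL_N})^K)$.
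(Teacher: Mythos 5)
Your proposal is correct and takes essentially the same approach as the paper (Proposition~\ref{prop:main2} plus the Langlands-classification reduction of Theorem~\ref{thm:main-non-temp}): it relies on Theorem~\ref{thm:MPFL}, the endoscopic character relation, depth preservation for $\GL_N$, and Proposition~\ref{prop:parab}, with the only difference being that the paper starts from a $\pi$ of minimal depth $r$ and deduces $(\pi_\phi^{\G})^{G_{x,r+}}\neq 0$, whereas you run the contrapositive, fixing $r<\depth(\phi)$ and deducing $\pi^{H_{x,r+}}=0$ for all $\pi$ in the packet. The two arguments are logically equivalent; the minor cosmetic caveat is that your test function $\mathbbm{1}_{G_{x,r+}}$ should be written as $\mathbbm{1}_{G_{x,r+}\rtimes\theta}$ on the twisted space.
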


We finally remark that we cannot expect that the inequality in Theorem \ref{thm:GV}  holds for a general connected reductive group.
For example, in \cite[Section 7.4]{MR3164986} Reeder and Yu constructed a candidate of the $L$-parameters corresponding to ``simple supercuspidal representations'' of $\mathrm{SU}_{p}(\Q_{p})$ for an odd prime $p$, by assuming Hiraga--Ichino--Ikeda's formal degree conjecture.
In this example, the depth of simple supercuspidal representations and the depth of their $L$-parameters are given by $\frac{1}{2p}$ and $\frac{1}{2(p+1)}$, respectively.
See also \cite[Section 3.3]{MR3618046}.

We explain on the organization of this paper.
In Section \ref{sec:pre}, we collect some basic preliminaries which will be needed in this paper.
In Section \ref{sec:descent}, we compute the semisimple descent for the Moy--Prasad filtrations of parahoric subgroups of general linear groups.
In Section \ref{sec:max}, by using the results in Section \ref{sec:descent}, we evaluate the maximum of the depth of representations in each $L$-packet of classical groups according to the converse of Ganapathy--Varma's method.
In Section \ref{sec:min-unitary}, by focusing on the unitary case, we evaluate the minimum of the depth of representations in each $L$-packet.

\medbreak
\noindent{\bfseries Acknowledgment.}\quad
The author would like to thank his advisor Yoichi Mieda for his support and encouragement.
This work was carried out with the support from the Program for Leading Graduate Schools, MEXT, Japan.
This work was also supported by JSPS Research Fellowship for Young Scientists and KAKENHI Grant Number 17J05451.

\setcounter{tocdepth}{2}
\tableofcontents

\medbreak
\noindent{\bfseries Notation.}\quad
Let $p$ be a prime number.
In this paper, we always assume that $p$ is not equal to $2$.
We fix a $p$-adic field $F$.
We denote the Weil group of $F$ by $W_{F}$.
For an algebraic variety $\J$ over $F$, we denote its $F$-valued points by $J$.
When $\J$ is a connected reductive group, we write $\widehat{\J}$ and ${}^{L}\J=\widehat{\J}\rtimes W_{F}$ for its Langlands dual group and $L$-group, respectively.

\section{Basic preliminaries}\label{sec:pre}

\subsection{Twisted endoscopy for general linear groups}
Let $(\G, \bfH)$ be one of the following pairs of connected reductive groups over $F$:
\begin{itemize}
\item[(1)]
$\G:=\GL_{2n+1}$ and $\bfH$ is the split symplectic group of size $2n$.
\item[(2)]
$\G:=\GL_{2n}$ and $\bfH$ is the split special orthogonal group of size $2n+1$.
\item[(3)]
$\G:=\GL_{2n}$ and $\bfH$ is a quasi-split special orthogonal group of size $2n$.
\item[(4)]
$\G:=\mathrm{Res}_{E/F}\GL_{N}$ for a quadratic extension $E/F$, and $\bfH$ is the quasi-split unitary group with respect to $E/F$ in $N$ variables.
\end{itemize}

For $\G$, we consider the following automorphism over $F$:
\[
\theta\colon\G\rightarrow\G;\quad
g\mapsto J_{N} {}^{t}\!c(g)^{-1}J_{N}^{-1}.
\]
Here $c$ is 
\[
\begin{cases}
\text{the identity map} & \text{if } \G=\GL_{N},\\
\text{the Galois conjugation of $E/F$} & \text{if } \G=\mathrm{Res}_{E/F}\GL_{N},
\end{cases}
\]
and $J_{N}$ is the anti-diagonal matrix whose $(i, N+1-i)$-th entry is given by $(-1)^{i-1}$.
Then $\bfH$ is an endoscopic group for $(\G,\theta)$.

On the other hand, by using the above automorphism $\theta$, we can define a disconnected reductive group $\G\rtimes\langle\theta\rangle$ as the semi-direct product of $\G$ and the group $\langle\theta\rangle$ generated by $\theta$.
We write $\wG$ for the connected component $\G\rtimes\theta$ of this group which does not contain the unit element.
We note that $\wG$ has left and right actions of $\G$ and is a bi-$\G$-torsor with respect to these actions.
Namely, $(\wG,\G)$ is a ``twisted space'' in the sense of Labesse.

We denote the identity component $(\G^{\theta})^{0}$ of the $\theta$-fixed part of $\G$ by $\G_{\theta}$.
Note that this is given by
\[
\G_{\theta}
=
\begin{cases}
\SO_{2n+1} & \text{if } \G=\GL_{2n+1},\\
\Sp_{2n} & \text{if } \G=\GL_{2n},\\
\U_{E/F}(N) & \text{if } \G=\mathrm{Res}_{E/F}\GL_{N}.
\end{cases}
\]

\subsection{Orbital integral and the Langlands--Shelstad--Kottwitz transfer}
Let $\J$ be a connected reductive group over $F$.
For an open subset $\mathcal{V}$ of $J$ which is invariant under $J$-conjugation, we denote the set of conjugacy classes of strongly regular semisimple elements of $J$ belonging to $\mathcal{V}$ by $\Gamma(\mathcal{V})$.
For an element $f\in C_{c}^{\infty}(\mathcal{V})$ and $\gamma\in\Gamma(\mathcal{V})$, we define the \textit{normalized orbital integral} of $f$ at $\gamma$ by
\[
I_{\gamma}(f)
:=
|D_{J}(\gamma)|^{\frac{1}{2}}\cdot
\int_{J_{\gamma}\backslash J}
f(g^{-1}\gamma g) \,d\dot{g},
\]
where $D_{J}(\gamma)$ is the Weyl discriminant of $\gamma$ in $J$, $J_{\gamma}$ is the $F$-valued points of the centralizer $\J_{\gamma}$ of $\gamma$ in $\J$, and $d\dot{g}$ is a right $J$-invariant measure on $J_{\gamma}\backslash J$ induced by Haar measures on $J$ and $J_{\gamma}$.
Then we can regard $I(f)$ as a function on $\Gamma(\mathcal{V})$.
We denote the set of such normalized orbital integrals by $\mathcal{I}(\mathcal{V})$:
\[
\mathcal{I}(\mathcal{V})
:=
\{I(f) \mid f\in C_{c}^{\infty}(\mathcal{V})\}
\subset
\{\text{$\C$-valued functions on $\Gamma(\mathcal{V})$}\}.
\]

Now we furthermore assume that $\mathcal{V}$ is invariant under stable conjugacy.
Then, for $f\in C_{c}^{\infty}(\mathcal{V})$ and a strongly regular semisimple element $\gamma\in \Gamma(\mathcal{V})$, we can define the \textit{stable orbital integral} of $f$ at $\gamma$ by
\[
SI_{\gamma}(f):=
\sum_{\gamma'\sim_{\mathrm{st}}\gamma / \sim}
I_{\gamma'}(f),
\]
where the sum is over the set of $J$-conjugacy classes of stable conjugacy classes of $\gamma$.
If we put $\mathcal{SI}(\mathcal{V})$ to be the set of such stable orbital integrals, then we have a canonical surjection $\mathcal{I}(\mathcal{V})\twoheadrightarrow\mathcal{SI}(\mathcal{V});\, I(f)\mapsto SI(f)$.

We note that every $J$-invariant distribution on $C_{c}^{\infty}(\mathcal{V})$ (that is, a $\C$-linear functional $C_{c}^{\infty}(\mathcal{V})\rightarrow\C$) factors through this space $\mathcal{I}(\mathcal{V})$.
If a $J$-invariant distribution on $C_{c}^{\infty}(\mathcal{V})$ factors through $\mathcal{SI}(\mathcal{V})$, we say that it is a \textit{stable distribution}.

For the twisted space $\w{\G}$, we can define similar objects.
Namely, for an open subset $\w{\mathcal{V}}$ of $\w{G}$ which is invariant under $G$-conjugation, we denote the set of conjugacy classes of strongly regular semisimple elements of $\w{\mathcal{V}}$ belonging to $\w{\mathcal{V}}$ by $\Gamma(\w{\mathcal{V}})$.
For an element $f\in C_{c}^{\infty}(\w{G})$ and $\t\delta\in\Gamma(\w{G})$, we define the \textit{normalized orbital integral} of $f$ at $\t\delta$ by
\[
I_{\t\delta}(f)
:=
|D_{\w{G}}(\t\delta)|^{\frac{1}{2}}\cdot
\int_{G_{\t\delta}\backslash G}
f(g^{-1}\t\delta g) \,d\dot{g},
\]
where $D_{\w{G}}(\t\delta)$ is the Weyl discriminant of $\t\delta$ in $\w{G}$, $G_{\t\delta}$ is the $F$-valued points of the centralizer $\G_{\t\delta}$ of $\t\delta$ in $\G$, and $d\dot{g}$ is a right $G$-invariant measure on $G_{\t\delta}\backslash G$ induced by Haar measures on $G$ and $G_{\t\delta}$.
We denote the set of such normalized orbital integrals by $\mathcal{I}(\w{\mathcal{V}})$:
\[
\mathcal{I}(\w{\mathcal{V}})
:=
\{I(f) \mid f\in C_{c}^{\infty}(\w{\mathcal{V}})\}
\subset
\{\text{$\C$-valued functions on $\Gamma(\w{\mathcal{V}})$}\}
\]
(note that every $G$-invariant distribution on $C_{c}^{\infty}(\w{\mathcal{V}})$ factors through this space $\mathcal{I}(\w{\mathcal{V}})$).

Now we can define the notion of \textit{transfer} for test functions.
Let $(\G,\bfH)$ be a pair as in the previous subsection.
We denote by $\Delta^{\mathrm{IV}}$ the Kottwitz--Shelstad transfer factor with respect to $(\G,\bfH)$ without the fourth factor $\Delta_{\mathrm{IV}}$ (see \cite[Section 5]{MR1687096} for the definition).
Note that, in order to normalize $\Delta^{\mathrm{IV}}$, we have to choose  a $\theta$-stable Whittaker data of $\G$.
From now on, we fix a $\theta$-stable Whittaker data of $\G$.

\begin{defn}[matching orbital integral, transfer of test functions]\label{defn:moi}
We say $f\in C_{c}^{\infty}(\w{G})$ and $f^{H}\in C_{c}^{\infty}(H)$ have \textit{matching orbital integrals} if, for every strongly $\G$-regular semisimple element $\gamma$ of $H$, we have
\[
SI_{\gamma}(f^{H})
=
\sum_{\t\delta\leftrightarrow\gamma/\sim} \Delta^{\mathrm{IV}}(\gamma,\t\delta)I_{\t\delta}(f),
\]
where the sum is over the set of $G$-conjugacy classes of strongly regular elements of $\w{G}$ such that $\gamma$ is a norm of $\t\delta$.
In this situation, we say that $f^{H}$ is a \textit{transfer} of $f$.
\end{defn}

Here we note that, we choose measures appearing in the above orbital integrals as in the manner of \cite[Section 5.5]{MR1687096}.
See also \cite[Remark 6.6.2]{MR3709003}.

On the existence of transfer of test functions, we have the following highly non-trivial theorem, which was established by a great deal of efforts of a lot of people represented by Waldspurger, Ng\^o, and so on (see, e.g., \cite[54 page]{MR3135650} for the details):
\begin{thm}[Langlands--Shelstad--Kottwitz's transfer conjecture]\label{thm:LSK}
For every $f\in C_{c}^{\infty}(\w{G})$, there exists a transfer $f^{H}\in C_{c}^{\infty}(H)$ of $f$.
Namely, we have a map $\mathcal{I}(\w{G})\rightarrow \mathcal{SI}(H)$ characterized by the matching orbital integral condition.
\end{thm}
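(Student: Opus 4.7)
The plan is to follow the strategy by which this theorem was actually proved, namely a two-stage reduction due primarily to Waldspurger, followed by Ng\^{o}'s geometric proof of the fundamental lemma. The first task is to reduce the transfer of arbitrary test functions $f\in C_{c}^{\infty}(\w{G})$ to a transfer statement for one distinguished function: the unit of the (unramified, possibly twisted) Hecke algebra, i.e.\ the characteristic function of a hyperspecial subgroup. The second task is to establish that distinguished transfer identity.

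For the first stage, I would use Harish-Chandra's semisimple descent near each strongly regular twisted class $\t\delta = s\theta$: the local behaviour of orbital integrals near $\t\delta$ is controlled by orbital integrals on the Lie algebra $\mfg_{s\theta}$ of the twisted centralizer, composed with an exponential-type map. Combined with Shalika germ expansions, this reduces the transfer to a Lie-algebra transfer for test functions supported in a small neighbourhood of $0$. Waldspurger's work then globalizes the Lie-algebra problem via a simple trace formula and reduces the full transfer statement to the fundamental lemma (the transfer of $\mathbbm{1}_{\mfg(\mcO)}$ to $\mathbbm{1}_{\mfh(\mcO)}$) at unramified places; his further work on \emph{endoscopie tordue} reduces the twisted fundamental lemma for $(\w{\G},\bfH)$ to the standard fundamental lemma for various (untwisted) endoscopic triples.

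For the second stage, I would invoke Ng\^{o}'s theorem: over an equal-characteristic local field, orbital integrals on $\mfg$ can be interpreted as point counts on fibers of the Hitchin fibration, stable orbital integrals correspond to cohomology on the anisotropic locus of the Hitchin base, and the matching identity for the basic functions becomes an instance of the support theorem for the Hitchin fibration, together with the decomposition theorem of Beilinson--Bernstein--Deligne--Gabber. The transition from the equal-characteristic to the mixed-characteristic setting is supplied by Kazhdan's close-fields principle, which shows that (for fixed combinatorial data) the fundamental lemma over $F$ depends only on the truncation $\mcO/\mfp^{N}$ for some $N$, so the positive-characteristic case implies the $p$-adic case in large residue characteristic; this was later extended to all residue characteristics by Cluckers--Hales--Loeser via motivic integration.

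The hard part is unambiguously the geometric input: Ng\^{o}'s support theorem for the Hitchin fibration rests on delicate codimension estimates for the strata indexed by the $\delta$-invariant of Severi, combined with a careful exploitation of the decomposition theorem, and constitutes by far the deepest single ingredient. By contrast, once the Lie-algebra fundamental lemma is known, the descent, globalization, and twisted-to-standard reductions of the first stage are technically involved but proceed through a fairly standard harmonic-analytic toolkit on $p$-adic groups.
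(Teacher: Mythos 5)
The paper does not prove this statement: Theorem \ref{thm:LSK} is stated as a known, deep result from the literature, with the text explicitly saying that it was ``established by a great deal of efforts of a lot of people represented by Waldspurger, Ng\^o, and so on'' and pointing to the discussion in Arthur's book \cite{MR3135650}. There is no argument in the paper to compare your sketch against; the result is used as a black box.

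That said, your outline is an essentially accurate high-level account of how the theorem was actually proved in the literature, so there is no gap in the sense of a wrong idea or a step that would fail. Two small corrections of attribution and emphasis are worth flagging. First, Waldspurger's reduction is to the \emph{Lie algebra} fundamental lemma (together with its twisted and weighted variants), not to the transfer of the Hecke algebra unit on the group; you say this correctly a sentence later, but the opening framing about ``the unit of the Hecke algebra'' is not quite the right target of the reduction. Second, the passage between equal and mixed characteristic in the fundamental-lemma context is due to Waldspurger (\emph{Endoscopie et changement de caract\'eristique}) rather than to Kazhdan's close-fields principle as such; the Cluckers--Hales--Loeser motivic approach you mention is indeed the route that removes the residue-characteristic restriction. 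None of this affects the correctness of the claim, and none of it is part of the present paper: at the level of detail a one-paragraph sketch permits, your description is reasonable, but it should be understood as a survey of external work rather than a proof the paper itself offers.
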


\subsection{Arthur's theory and the endoscopic character relation}
In this section, we recall Arthur's theory of the endoscopic classification of representations of classical groups over $F$.

To state Arthur's theorem on the classification of representations, we define some notations.
Let $\J$ be either a general linear group or a quasi-split classical group over $F$.
We write $\Out(\J)$ for the group of outer automorphisms of $\J$, namely the quotient of the group $\mathrm{Aut}(\J)$ of automorphisms of $\J$ by the group $\mathrm{Inn}(\J)$ of inner automorphism of $\J$.
Here we note that $\Out(\J)$ is non-trivial only when $\J$ is an even special orthogonal group, and that in this case we have $\Out(\J)\cong\Z/2\Z$.

We denote the set of equivalence classes of irreducible smooth (resp.\ tempered) representations of $J$ by $\Pi(\J)$ (resp.\ $\Pi_{\mathrm{temp}}(\J)$).
Then the group $\Out(\J)$ acts on these sets.
We write $\widetilde{\Pi}(\J)$ (resp.\ $\widetilde{\Pi}_{\mathrm{temp}}(\J)$) for the set of $\Out(\J)$-orbits in $\Pi(\J)$ (resp.\ $\Pi_{\mathrm{temp}}(\J)$).
For an $\Out(\J)$-orbit $\tilde{\pi}$ in $\w\Pi_{\mathrm{temp}}(\J)$, we put
\[
\Theta_{\tilde{\pi}}
:=
\frac{1}{|\t\pi|}\sum_{\pi\in\t\pi}\Theta_{\pi},
\]
where $\Theta_{\pi}$ is the character of $\pi$.

We denote the set of $\widehat{\J}$-conjugacy classes of $L$-parameters (resp.\ tempered $L$-parameters) of $\J$ by $\Phi(\J)$ (resp.\ $\Phi_{\mathrm{temp}}(\J)$), and their $\Out(\J)$-orbits by $\tPhi(\J)$ (resp.\ $\tPhi_{\mathrm{temp}}(\J)$).

Now let $(\G,\bfH)$ be one of the pairs considered in Section \ref{sec:pre}.1.
Then, since $\bfH$ is an endoscopic group of $\G$, we can regard an element $\phi\in\Phi_{\mathrm{temp}}(\bfH)$ as an $L$-parameter of $\G$.
This operation induces an injection from $\tPhi_{\mathrm{temp}}(\bfH)$ to $\Phi_{\mathrm{temp}}(\G)$.

The following is the local part of Arthur's theory (the \textit{local Langlands correspondence} for $\bfH$):
\begin{thm}[{\cite[Theorems 1.5.1 and 2.2.1]{MR3135650}}, {\cite[Theorems 2.5.1 and 3.2.1]{MR3338302}}]\label{thm:Arthur}
We have a natural partition
\[
\widetilde{\Pi}_{\mathrm{temp}}(\bfH) = \bigsqcup_{\phi\in\tPhi_{\mathrm{temp}}(\bfH)} \Pi^{\bfH}_{\phi}.
\]
of $\widetilde{\Pi}_{\mathrm{temp}}(\bfH)$ into finite subsets $($which are called the $L$-packets$)$ parametrized by tempered $L$-parameters.
Here each $L$-packet $\Pi_{\phi}^{\bfH}$ satisfies the stability $($namely, the sum $\Theta_{\phi}^{\bfH}$ of the characters of representations belonging to $\Pi_{\phi}^{\bfH}$ is a stable distribution$)$ and the following equality for every $f\in C_{c}^{\infty}(\w{G})$:
\[
\Theta_{\phi,\theta}^{\G}(f)
=
\Theta_{\phi}^{\bfH}(f^{H}),
\]
where 
\begin{itemize}
\item
$\Theta_{\phi,\theta}^{\G}$ is the $\theta$-twisted character of $\pi_{\phi}^{\G}$, which is the irreducible tempered representation of $G$ corresponding to $\phi$ under the local Langlands correspondence for $\G$, normalized by the fixed $\theta$-stable Whittaker data of $\G$,
\item
$\Theta_{\phi}^{\bfH}$ is the sum of the characters of representations belonging to $\Pi_{\phi}^{\bfH}$, and
\item
$f^{H}\in C_{c}^{\infty}(H)$ is a transfer of $f$ $($note that the existence of a transfer is assured by Theorem $\ref{thm:LSK}$$)$.
\end{itemize}
\end{thm}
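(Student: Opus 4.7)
The proof is due to Arthur (symplectic and special orthogonal case, \cite{MR3135650}) and Mok (unitary case, \cite{MR3338302}), and the plan follows their common strategy of comparing the stable trace formula for $\bfH$ with the $\theta$-twisted trace formula for $\G$. The three assertions---the existence of a finite partition into $L$-packets indexed by $\tPhi_{\mathrm{temp}}(\bfH)$, the stability of $\Theta_{\phi}^{\bfH}$, and the twisted endoscopic character identity $\Theta_{\phi,\theta}^{\G}(f)=\Theta_{\phi}^{\bfH}(f^{H})$---are proved simultaneously as consequences of this comparison, together with a large joint induction on the rank and on the endoscopic data involved.

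First I would treat the case of a discrete (square-integrable) $L$-parameter $\phi$. The plan is to globalize $\phi$ to a global $L$-parameter $\phi^{\bullet}$ for a global form of $\bfH$ over a totally real (or CM) number field, arranged so that $\phi$ appears as the local component at one distinguished place $v_0$ while at all other places the local components are simple enough to be controlled directly. Composing with the $L$-embedding $\iota$ produces a global self-dual (conjugate self-dual, in the unitary case) cuspidal automorphic representation of $\GL_N$, recognized via Jacquet--Shalika and Moeglin--Waldspurger. The stabilization of the $\theta$-twisted trace formula for $\G^{\bullet}$ and of the ordinary trace formula for $\bfH^{\bullet}$---both resting on Theorem \ref{thm:LSK} together with Ng\^o's fundamental lemma and Waldspurger's reduction theorems---yields an equality of spectral distributions whose specialization at $v_0$, after peeling off the known factors at the other places using induction hypotheses, is exactly the desired endoscopic character identity. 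The stability of $\Theta_{\phi}^{\bfH}$ is built into this construction because the spectral contribution on the $\bfH$-side is a priori a stable distribution.

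Second, the general tempered case reduces to the discrete case by parabolic induction. One writes $\phi=\phi_M\oplus\phi_M^{\vee}\oplus\phi_0$ coming from a Levi $\bfM\subset\bfH$, constructs $\Pi_{\phi}^{\bfH}$ from the discrete $L$-packet of $\bfM$ via normalized parabolic induction, and verifies the twisted character identity on induced representations. The delicate point is the compatible normalization of intertwining operators on both sides: the fixed $\theta$-stable Whittaker datum of $\G$ pins down the normalization on the twisted general linear side (through Shahidi's local coefficients), and one must construct a matching normalization on the $\bfH$-side. Arthur accomplishes this via his \emph{local intertwining relation}, whose proof is itself a global argument of the same flavor, bootstrapping from smaller-rank instances of the theorem.

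The hard part is precisely this bootstrapping: almost every local statement is established only via globalization, and the global inputs rest on local statements of smaller rank or on companion endoscopic groups. The induction must therefore package together the trace formula comparison, the stabilization of the twisted trace formula (Moeglin--Waldspurger), the stable multiplicity formula with its delicate sign characters, the local intertwining relation, and the independence of the constructed local $L$-packet from the auxiliary global choices. Disentangling a genuinely self-contained local proof of even a single one of the three assertions does not appear to be possible with currently available tools; the whole theorem is proved in one inductive package rather than step by step.
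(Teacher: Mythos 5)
The paper does not prove this statement; it is quoted verbatim as a black-box input from Arthur \cite{MR3135650} and Mok \cite{MR3338302}, with no proof given in the text. Your sketch is a faithful high-level account of the strategy in those references (twisted trace formula comparison, globalization, stable multiplicity formula, local intertwining relation, joint induction), so it is consistent with the approach the paper implicitly relies on.
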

We call the representation $\pi_{\phi}^{\G}$ the \textit{endoscopic lift} of the $L$-packet $\Pi_{\phi}^{\bfH}$ from $\bfH$ to $\G$, and the equality in the above theorem the \textit{endoscopic character relation} for $\pi_{\phi}^{\G}$ and $\Pi_{\phi}^{\bfH}$.

\subsection{Cayley transform for classical groups}
In this subsection, we recall the definition of the Cayley transform and its fundamental properties proved in \cite{MR3709003}.

Let $\J$ be either a general linear group or a quasi-split classical group over $F$.
We set $\mfj:=\Lie\J(F)$, where $\Lie\J$ is the Lie algebra of $\J$.
For an element $g\in J$, we say that $g$ is topologically unipotent if we have $\lim_{n\rightarrow\infty}g^{p^{n}}=1$.
We write $J_{\tu}$ for the set of topologically unipotent elements of $J$.
On the other hand, if $\J$ is a classical group associated to a $F$-vector space $V$, then we can regard $\mfj$ as a subalgebra of $\End(V)$.
Thus, for an element $X\in \mfj$, we can consider its power as a matrix.
We say that $X\in\mfj$ is topologically nilpotent if we have $\lim_{n\rightarrow\infty}X^{n}=0$.
We write $\mfj_{\tn}$ for the set of topologically nilpotent elements of $\mfj$.

Now let $(\G,\bfH)$ be one of the pairs in Section \ref{sec:pre}.1. 

\begin{defn}[Cayley transform for general linear groups]\label{defn:Cayley}
Let $\mfc$ be the map from $\mathfrak{gl}_{N}(F)_{\tn}$ to $\GL_{N}(F)_{\tu}$ defined by
\[
\mfc\colon\mathfrak{gl}_{N}(F)_{\tn}\rightarrow \GL_{N}(F)_{\tu};\quad
X\mapsto \frac{1+\frac{X}{2}}{1-\frac{X}{2}}.
\]
We call this map $\mfc$ the \textit{Cayley transform} for $\GL_{N}$.
\end{defn}

\begin{prop}\label{prop:Cayley-properties}
\begin{enumerate}
\item
The Cayley transform $\mfc$ for $\GL_{N}$ is a homeomorphism, and its inverse is given by
\[
\mfc^{-1}\colon\GL_{N}(F)_{\tu}\rightarrow\mathfrak{gl}_{N}(F)_{\tn};\quad
g\mapsto \frac{2(g-1)}{g+1}.
\]
\item
For any $A\in\GL_{N}(F)$, we have 
\[
\mfc\circ (X\mapsto -{}^{t}\!X) = (g\mapsto {}^{t}\!g^{-1})\circ\mfc
\quad\text{and}\quad
\mathrm{Int}(A)\circ\mfc=\mfc\circ\Ad(A).
\]
In particular, for any quasi-split classical group $\J$ over $F$, $\mfc$ defines a homeomorphism from $\mfj_{\tn}$ to $J_{\tu}$.
\end{enumerate}
\end{prop}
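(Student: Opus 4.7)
My plan is to reduce everything to direct algebraic manipulations, being careful about well-definedness and about the classical-group case at the end.

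For part (1), I would first check that the two formulas make sense. The hypothesis $p\neq 2$ means $\frac{1}{2}\in \mcO_{F}$, so $X/2$ is topologically nilpotent whenever $X$ is, hence $1-X/2$ has an inverse given by the geometric series $\sum_{n\geq 0}(X/2)^{n}$. Dually, if $g\in\GL_{N}(F)_{\tu}$ then $g-1$ is topologically nilpotent, so $g+1=2+(g-1)$ is invertible. Next I would compute
\[
\mfc(X)-1 = \frac{X}{1-\tfrac{X}{2}},
\]
which is topologically nilpotent as a product of a topologically nilpotent element with an invertible one; hence $\mfc(X)\in\GL_{N}(F)_{\tu}$. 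The symmetric computation shows that the putative inverse $g\mapsto 2(g-1)(g+1)^{-1}$ sends $\GL_{N}(F)_{\tu}$ into $\mathfrak{gl}_{N}(F)_{\tn}$. The identities $\mfc^{-1}\circ\mfc=\mathrm{id}$ and $\mfc\circ\mfc^{-1}=\mathrm{id}$ are formal rational identities in the commutative subring of $\mathfrak{gl}_{N}$ generated by a single element, and continuity is automatic from the rational expressions.

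For the two identities in part (2), I would work entirely inside the commutative subring generated by $X$ (respectively $\Ad(A)X$). Since $1\pm X/2$ commute, we get
\[
{}^{t}\!\mfc(X)^{-1}
={}^{t}\!\bigl((1-\tfrac{X}{2})(1+\tfrac{X}{2})^{-1}\bigr)
=(1+\tfrac{{}^{t}\!X}{2})^{-1}(1-\tfrac{{}^{t}\!X}{2})
=\mfc(-{}^{t}\!X),
\]
which is the first identity. The second identity is immediate because conjugation by $A$ is a ring automorphism and hence commutes with $X\mapsto(1+X/2)(1-X/2)^{-1}$.

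For the final assertion, I would unwind the definition of the quasi-split classical group $\J$ from Section~2.1 as the fixed points of the involution $\sigma(g)=J_{N}\,{}^{t}\!c(g)^{-1}J_{N}^{-1}$ on $\G$; correspondingly $\mfj=\Lie\J(F)$ is cut out by $J_{N}\,{}^{t}\!c(X)J_{N}^{-1}=-X$. Combining the two identities of part (2) with the observation that the Galois conjugation $c$ (in the unitary case) commutes with $\mfc$, I would obtain
\[
\sigma\bigl(\mfc(X)\bigr)
=\mathrm{Int}(J_{N})\bigl({}^{t}\!c(\mfc(X))^{-1}\bigr)
=\mathrm{Int}(J_{N})\bigl(\mfc(-{}^{t}\!c(X))\bigr)
=\mfc\bigl(\mathrm{Ad}(J_{N})(-{}^{t}\!c(X))\bigr),
\]
which equals $\mfc(X)$ precisely when $X\in\mfj$. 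Hence $\mfc(\mfj_{\tn})\subseteq J_{\tu}$, and the dual computation shows $\mfc^{-1}(J_{\tu})\subseteq \mfj_{\tn}$, giving the required homeomorphism. The only mild subtlety—really more a bookkeeping point than an obstacle—is to track the Galois conjugation $c$ through the computation in the unitary case, for which one simply notes that $c$ is an $F$-algebra automorphism of $\mathrm{Mat}_{N}(E)$ and hence commutes with both the rational formula defining $\mfc$ and with the transpose.
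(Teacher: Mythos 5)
Your proof is correct in substance, and it is worth noting that the paper itself gives no argument here: it simply cites Lemma~3.2.3 and Remark~3.2.4 of Ganapathy--Varma \cite{MR3709003}. Your computations are presumably the content of those references, so this is less ``a different route'' than ``a route, where the paper has none.'' Two small points are worth tightening. First, the phrase that $\mfc(X)-1 = X(1-\tfrac{X}{2})^{-1}$ is topologically nilpotent ``as a product of a topologically nilpotent element with an invertible one'' is not a valid general principle (a topologically nilpotent element times a general invertible element need not be topologically nilpotent); what makes it work here is that the two factors commute and the second factor is a topological unit of the form $1 + (\text{topologically nilpotent})$, or equivalently that the eigenvalues of $\mfc(X)-1$ are $\lambda/(1-\lambda/2)$ with $|\lambda|<1$ and $|1-\lambda/2|=1$, hence all have absolute value $<1$. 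Second, for even and odd special orthogonal groups the involution $\sigma$ cuts out the full orthogonal group rather than its identity component, so after establishing $\sigma(\mfc(X))=\mfc(X)$ one should add the (one-line) observation that a topologically unipotent element of an orthogonal group automatically lies in the special orthogonal group when $p\neq 2$, since its determinant lies in $\{\pm1\}\cap(1+\mfp_F)=\{1\}$. Neither point undermines the argument; they are just places where the justification as written is imprecise.
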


\begin{proof}
The first assertion follows from Lemma 3.2.3 in \cite{MR3709003} and an easy computation.
The second assertion is cited from Remark 3.2.4 in \cite{MR3709003}.
\end{proof}

\begin{defn}\label{defn:Cayley-classic}
Let $\mfc'$ be the map from $\mfh_{\tn}$ to $H_{\tu}$ defined by
\[
\mfc'\colon\mfh_{\tn}\rightarrow H_{\tu};\quad
X\mapsto
\begin{cases}
\mfc(\frac{X}{2})^{2}& \text{if $\bfH=\Sp_{2n}$ or $\bfH=\SO_{2n+1}$},\\
\mfc(X)^{2}& \text{if $\bfH=\SO_{2n}$ or $\bfH=\U_{E/F}(N)$}.
\end{cases}
\]
\end{defn}


\subsection{Moy--Prasad filtrations of classical groups}
In this subsection, we collect basic properties of the Moy--Prasad filtrations of parahoric subgroups of classical groups.
We follow the notations of \cite[Section 10.1]{MR3709003}.
Namely, we use the following notations.
Let $\J$ be a connected reductive group over $F$.
We denote its Bruhat-Tits building by $\B(\J,F)$.
For a point $x\in\B(\J,F)$, we have a corresponding parahoric subgroup $J_{x}$ of $J$ and its Moy--Prasad filtration $\{J_{x,r}\}_{r\in\R_{\geq0}}$ (note that we have $J_{x}=J_{x,0}$).
Similarly to these filtrations, we also have the Moy--Prasad filtration $\{\mathfrak{j}_{x,r}\}_{r\in\R}$ of the Lie algebra $\mathfrak{j}=\Lie\J(F)$.
Here we note that we use the valuation of $F$ to define these filtrations and that it differs from the original definition in \cite{MR1371680} (in \cite{MR1371680}, the valuation of the splitting field of $\J$ is used to define the Moy--Prasad filtrations).
Namely, we normalize the indices of the Moy--Prasad filtrations so that, for any uniformizer $\varpi_{F}$ of $F$, the following hold:
\[
\mathfrak{j}_{x,r+1}=\varpi_{F}\cdot\mathfrak{j}_{x,r}.
\]
We write $J_{x,r:r+}$ and $\mathfrak{j}_{x,r:r+}$ for the quotients $J_{x,r}/J_{x,r+}$ and $\mathfrak{j}_{x,r}/\mathfrak{j}_{x,r+}$, respectively (here $r+$ means $r+\varepsilon$ for a sufficiently small positive number $\varepsilon$).
We put 
\[
J_{r(+)}:=\bigcup_{x\in\B(\J,F)} J_{x,r(+)}
\quad\text{and}\quad
\mathfrak{j}_{r(+)}:=\bigcup_{x\in\B(\J,F)} \mathfrak{j}_{x,r(+)}
\]
(note that we have $J_{\tu}=J_{0+}$).

When $\J$ is either a general linear group or a quasi-split classical group over $F$, we have the following property on the Cayley transform:
\begin{prop}[{\cite[Lemma 10.2.1]{MR3709003}}]\label{prop:Cayleyhomeo}
Let $\J$ be either a general linear group or a quasi-split classical group over $F$.
Let $x\in\B(\J,F)$ and $r\in\R_{>0}$.
The Cayley transform $\mfc$ induces a homeomorphism
\[
\mfc\colon\mfj_{x,r}\rightarrow{J}_{x,r}.
\]
In particular, $\mfc$ induces an isomorphism of abelian groups
\[
\mfc\colon\mfj_{x,r:r+}\xrightarrow{\cong}{J}_{x,r:r+}.
\]
\end{prop}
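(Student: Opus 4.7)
The plan is to reduce the assertion to the case $\J=\GL_{N}$ (or $\J=\mathrm{Res}_{E/F}\GL_{N}$) and then carry out a direct power-series computation, exploiting the multiplicative structure of the Moy--Prasad filtrations on $\mathfrak{gl}_{N}$.

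First I would handle the reduction. By Proposition \ref{prop:Cayley-properties}(2), the Cayley transform $\mfc$ of the ambient (possibly restricted) general linear group intertwines $X\mapsto -{}^{t}\!X$ with $g\mapsto {}^{t}\!g^{-1}$, and hence, after conjugation by $J_{N}$, intertwines the differential of $\theta$ on $\mfg$ with $\theta$ on $\G$. Consequently $\mfc$ carries $\mfh_{\tn}$ into $H_{\tu}$ by restriction. Combined with the standard compatibility from Bruhat--Tits theory that $\B(\bfH,F)$ embeds in $\B(\G,F)$ in such a way that, after identifying $x$ with its image,
\[
\mfh_{x,r}=\mfg_{x,r}\cap\mfh,\qquad H_{x,r}=G_{x,r}\cap H
\]
for $r>0$, the statement for $\bfH$ will follow from the statement for the ambient general linear group by restriction.

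Next, for the ambient case, I would prove the containment $\mfc(\mfg_{x,r})\subseteq G_{x,r}$ directly. The key structural input is the multiplicative property $\mfg_{x,s}\cdot\mfg_{x,t}\subseteq\mfg_{x,s+t}$ of the Moy--Prasad filtration on $\mathfrak{gl}_{N}$, which is built into the lattice-chain description of parahoric filtrations for $\GL_{N}$. Given $X\in\mfg_{x,r}$ with $r>0$, the element $X/2$ lies in $\mfg_{x,r}$ since $p\neq 2$, and the series $(1-X/2)^{-1}=\sum_{n\geq 0}(X/2)^{n}$ converges in $G_{x}$. Therefore
\[
\mfc(X)-1=X\cdot (1-X/2)^{-1}=\sum_{n\geq 1}2^{\,1-n}X^{n},
\]
whose $n$-th term lies in $\mfg_{x,nr}\subseteq \mfg_{x,r}$, so $\mfc(X)\in 1+\mfg_{x,r}=G_{x,r}$, using that for $\GL_{N}$ with $r>0$ the filtration $G_{x,r}$ is characterized by $g-1\in\mfg_{x,r}$. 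The inverse direction is symmetric: for $g=1+Y\in G_{x,r}$ one expands $\mfc^{-1}(g)=2Y(2+Y)^{-1}=Y\cdot (1+Y/2)^{-1}$ and concludes $\mfc^{-1}(g)\in\mfg_{x,r}$ by the same estimate. Continuity of both $\mfc$ and $\mfc^{-1}$ is evident from their expression as convergent matrix-valued power series.

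Finally, for the graded assertion the expansion above shows $\mfc(X)\equiv 1+X\pmod{\mfg_{x,2r}}$ and $\mfg_{x,2r}\subseteq\mfg_{x,r+}$. On the quotients $\mfg_{x,r:r+}\to G_{x,r:r+}$, $\mfc$ therefore reduces to $X\mapsto 1+X$, which is manifestly a group homomorphism from the additive group $\mfg_{x,r:r+}$ to $G_{x,r:r+}$ (the latter abelian since $r>0$); bijectivity on the quotients is inherited from bijectivity on the full filtrations. The main obstacle I anticipate is the verification of the compatibility of the Moy--Prasad filtrations of $\bfH$ with those of the ambient general linear group, especially in the unitary case with a ramified quadratic extension $E/F$, where the normalization of $r$ using the valuation of $F$ (rather than of $E$) must be tracked carefully; the power-series manipulations themselves are routine once this ambient filtration structure is in place.
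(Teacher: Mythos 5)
The paper does not actually prove this proposition; it is cited verbatim from \cite[Lemma 10.2.1]{MR3709003}, with the paper adding only a one-sentence remark that the quasi-split unitary case follows ``by the exactly same arguments.'' So there is no internal proof to compare with directly. That said, your computation for the ambient general linear group is correct and essentially complete: the power-series expansion $\mfc(X)-1=\sum_{n\geq1}2^{1-n}X^n$, the multiplicativity $\mfg_{x,s}\cdot\mfg_{x,t}\subseteq\mfg_{x,s+t}$ of the lattice filtration on $\mathfrak{gl}_N$, and the characterization $G_{x,r}=1+\mfg_{x,r}$ for $r>0$ together give both directions and the graded statement cleanly. This is almost certainly how the ambient case is handled in \cite{MR3709003} as well.

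The concern is the reduction step. You invoke, as ``standard compatibility from Bruhat--Tits theory,'' the group-level identity $H_{x,r}=G_{x,r}\cap H$. But observe how the paper treats the analogous statement: Proposition \ref{prop:twistMP}, which asserts $(G_{x,r})^{\theta}=G_{\theta,x,r}$ alongside the Lie algebra version $(\mfg_{x,r})^{d\theta}=\mfg_{\theta,x,r}$, is proved \emph{using} Proposition \ref{prop:Cayleyhomeo} together with Lemaire's comparison theorem \cite{MR2532460}. In that logic, Lemaire supplies the Lie algebra comparison $\mfh_{x,r}=\mfg_{x,r}\cap\mfh$, and the Cayley homeomorphism of Proposition \ref{prop:Cayleyhomeo} is what transports it to the group level. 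Your reduction thus risks circularity: if the only route you have to $H_{x,r}=G_{x,r}\cap H$ passes through the very homeomorphism you are trying to establish, the argument does not close. Indeed, once you grant the $\GL_N$ case and the Lie algebra compatibility, the two statements ``$\mfc(\mfh_{x,r})=H_{x,r}$'' and ``$G_{x,r}\cap H=H_{x,r}$'' become equivalent, so citing one to prove the other is vacuous. To repair this, you would need either to prove $H_{x,r}=G_{x,r}\cap H$ for $r>0$ by a Bruhat--Tits/root-subgroup argument that does not already assume the Cayley statement (this is possible but is genuine work, especially in the ramified unitary case with the $F$-normalization of indices, as you already anticipate), or to argue directly in $\mfh$: namely, run the same power series inside $\mathfrak{gl}_N$ to get $\mfc(X)\in G_{x,r}\cap H$ for $X\in\mfh_{x,r}\subseteq\mfg_{x,r}$, and then cite an independent source for the group-level Moy--Prasad compatibility rather than calling it standard.
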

Here we remark that, in \cite{MR3709003}, they treat only the cases of symplectic groups and orthogonal groups.
However, by the exactly same arguments, we can show the above proposition for quasi-split unitary groups.

We next recall the compatibility of the Moy--Prasad filtrations of general linear groups with the involution $\theta$ (here we use the notations in Section \ref{sec:pre}.1).
First, we note that the Bruhat-Tits building $\B(\G_{\theta},F)$ of $\G_{\theta}$ can be $G_{\theta}$-equivariantly identified with the $\theta$-fixed points of $\B(\G,F)$ (see \cite[Remark 10.2.2]{MR3709003}).
In the rest of this paper, we always use this identification $\B(\G_{\theta},F)\cong\B(\G,F)^{\theta}$.
Under this identification, we have the following properties:

\begin{prop}[{\cite[Lemma 10.2.3]{MR3709003}}]\label{prop:twistMP}
Let $x\in\B(\G_{\theta},F)$ and $r\in\R_{>0}$.
Then we have 
\[
({G}_{x,r})^{\theta}=G_{\theta,x,r}
\quad\text{and}\quad
({\mfg}_{x,r})^{d\theta}=\mfg_{\theta,x,r}.
\]
Moreover, we can identify $G_{\theta,x,r:r+}$ and $\mfg_{\theta,x,r:r+}$ with $({G}_{x,r:r+})^{\theta}$ and $({\mfg}_{x,r:r+})^{d\theta}$, respectively.
\end{prop}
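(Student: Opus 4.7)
The plan is to reduce the group-level statement to the Lie algebra statement via the Cayley transform, and then to establish the Lie algebra statement through a direct analysis on a $\theta$-stable apartment.

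First I would check that the Cayley transform intertwines the two involutions, i.e.\ $\theta\circ\mfc = \mfc\circ d\theta$ on $\mathfrak{gl}_N(F)_{\tn}$. A short direct computation from the formulas $\theta(g)=J_N\,{}^{t}\!c(g)^{-1}J_N^{-1}$ and $d\theta(X)=-J_N\,{}^{t}\!c(X)J_N^{-1}$ establishes this, using the compatibility of $\mfc$ with transposition and inner automorphisms recorded in Proposition \ref{prop:Cayley-properties}(2). Combined with Proposition \ref{prop:Cayleyhomeo}, which gives a homeomorphism $\mfc\colon\mfg_{x,r}\xrightarrow{\sim} G_{x,r}$ and an abelian group isomorphism $\mfg_{x,r:r+}\xrightarrow{\sim} G_{x,r:r+}$ for $r>0$, this intertwining transports the desired Lie algebra equality $(\mfg_{x,r})^{d\theta}=\mfg_{\theta,x,r}$ to the group equality $(G_{x,r})^\theta=G_{\theta,x,r}$, and simultaneously yields the identifications of the graded pieces. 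The whole proposition thus reduces to the Lie algebra claim.

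For the Lie algebra claim, the inclusion $\mfg_{\theta,x,r}\subseteq(\mfg_{x,r})^{d\theta}$ is immediate from the functoriality of the Moy--Prasad filtration under the closed embedding $\G_\theta\hookrightarrow\G$ together with the identification $\B(\G_\theta,F)\subseteq\B(\G,F)^\theta$. For the reverse inclusion, I would pick a maximal $F$-split torus $\mathbf{S}$ of $\G_\theta$ whose apartment in $\B(\G_\theta,F)$ contains $x$, and extend $\mathbf{S}$ to a $\theta$-stable maximal $F$-split torus $\mathbf{T}$ of $\G$; this is standard tame descent, available because $p\neq 2$ makes $\theta$ of order prime to $p$. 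On the apartment $\mathcal{A}(\mathbf{T},F)$ the filtration $\mfg_{x,r}$ decomposes as a direct sum of a torus piece and root-space pieces indexed by the roots of $\mathbf{T}$ in $\G$. Since $x$ is $\theta$-fixed, $d\theta$ permutes these pieces while preserving the valuations that define the filtration, and taking $d\theta$-fixed vectors and regrouping by $d\theta$-orbits of roots produces precisely the analogous decomposition of $\mfg_{\theta,x,r}$ along the roots of $\mathbf{S}$ in $\G_\theta$.

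The main hurdle I anticipate is the bookkeeping for each type of $d\theta$-orbit on roots, especially distinguishing $d\theta$-fixed roots from exchanged pairs and identifying the resulting contributions with the correct root spaces of $\G_\theta$ (for instance, $\theta$-fixed roots of $\GL_{2n}$ contributing to long roots of $\Sp_{2n}$). For exchanged pairs, which arise prominently in the restriction-of-scalars case $\G=\mathrm{Res}_{E/F}\GL_N$, one must verify that the indexing of the Moy--Prasad filtrations, normalized with respect to the valuation of $F$ rather than that of $E$ as recalled just before the proposition, lines up correctly under the identification. Once this combinatorial alignment is carried out case by case for the four pairs $(\G,\bfH)$ listed in Section \ref{sec:pre}.1, the reverse inclusion, and hence the proposition, follows.
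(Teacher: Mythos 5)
Your reduction of the group statement to the Lie algebra statement via the Cayley transform follows the paper's strategy, but there are two substantive issues.

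For the first assertion, the paper does not give a root-theoretic proof at all: it cites Lemaire's comparison theorem between lattice filtrations and Moy--Prasad filtrations (\cite{MR2532460}) together with Proposition~\ref{prop:Cayleyhomeo}, so that the compatibility of the filtrations with $\theta$ becomes transparent on the level of lattices in the underlying vector space. Your proposed alternative, decomposing $\mfg_{x,r}$ along a $\theta$-stable maximal $F$-split torus $\mathbf{T}\supset\mathbf{S}$, breaks down exactly in the restriction-of-scalars case: when $\G=\mathrm{Res}_{E/F}\GL_N$ with $E\neq F$, the centralizer of a maximal $F$-split torus $\mathbf{S}$ of $\G_\theta=\U_{E/F}(N)$ in $\G$ is a $\theta$-stable maximal torus that splits only over $E$, so there is no $\theta$-stable maximal $F$-split torus of $\G$ containing $\mathbf{S}$. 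You would need to pass to apartments over the splitting field and descend, which is precisely the extra layer of argument that the citation of Lemaire avoids.

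For the second assertion, the claim that the Cayley intertwining ``simultaneously yields the identifications of the graded pieces'' hides a genuine gap. Knowing $(\mfg_{x,r})^{d\theta}=\mfg_{\theta,x,r}$ for all $r>0$ gives you at once an injection $\mfg_{\theta,x,r:r+}\hookrightarrow(\mfg_{x,r:r+})^{d\theta}$ (the kernel of $\mfg_{\theta,x,r}\to(\mfg_{x,r:r+})^{d\theta}$ is $(\mfg_{x,r+})^{d\theta}=\mfg_{\theta,x,r+}$), but it does not give surjectivity: an element $X\in\mfg_{x,r}$ whose class is $d\theta$-fixed modulo $\mfg_{x,r+}$ need not itself be $d\theta$-fixed. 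The paper closes this gap with the averaging trick $X'=X+\tfrac{1}{2}\bigl(d\theta(X)-X\bigr)$, which uses $p\neq 2$: then $X'\in(\mfg_{x,r})^{d\theta}=\mfg_{\theta,x,r}$ and $X'-X\in\mfg_{x,r+}$, so $X'$ represents the same class. Your proposal needs to supply this step (or an equivalent) before the graded-piece identifications can be called proven.
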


\begin{proof}
The first assertion can be deduced from the comparison theorem of the lattice filtrations and the Moy--Prasad filtrations (\cite{MR2532460}) and Proposition \ref{prop:Cayleyhomeo} (see \cite[Remark 10.2.2]{MR3709003}).

We check the second assertion.
By the first assertion, we can identify $G_{\theta,x,r:r+}$ and $\mfg_{\theta,x,r:r+}$ as subsets of $({G}_{x,r:r+})^{\theta}$ and $({\mfg}_{x,r:r+})^{d\theta}$, respectively.
We show that $G_{\theta,x,r:r+}=({G}_{x,r:r+})^{\theta}$ and that $\mfg_{\theta,x,r:r+}=({\mfg}_{x,r:r+})^{d\theta}$.
By Proposition \ref{prop:Cayleyhomeo} and the commutativity of $\mfc$ and $\theta$ (Proposition \ref{prop:Cayley-properties}), it suffices to show only the latter equality $\mfg_{\theta,x,r:r+}=({\mfg}_{x,r:r+})^{d\theta}$.
Let $X$ be an element of $\mfg_{x,r}$ satisfying $d\theta(X+\mfg_{x,r+})=X+\mfg_{x,r+}$.
If we put $Y:=d\theta(X)-X$ and $X':=X+\frac{1}{2}Y$, then we have $Y\in\mfg_{x,r+}$ and $X'\in\mfg_{\theta,x,r}$.
Namely, the coset $X'+\mfg_{\theta,x,r+}$ of $\mfg_{\theta,x,r:r+}$ maps to $X+\mfg_{x,r+}$ under the injection $\mfg_{\theta,x,r:r+}\hookrightarrow({\mfg}_{x,r:r+})^{d\theta}$.
This completes the proof.
\end{proof}

\subsection{Depth of representations}
In this subsection, we recall the notion of \textit{depth} of representations.
Let $\J$ be a connected reductive group over $F$.

\begin{defn}\label{defn:repdepth}
For an irreducible smooth representation $\pi$ of $J$, we define its depth to be
\[
\depth(\pi)
:=
\inf \bigr\{r\in\R_{\geq0} \,\big\vert\, \pi^{J_{x, r+}}\neq0 \text{ for some } x\in\mathcal{B}(\J,F)\bigr\}\in\R_{\geq0}.
\]
\end{defn}

\begin{prop}[{\cite[Theorem 3.5]{MR1371680}}]\label{prop:depth}
For every irreducible smooth representation $\pi$ of $J$, its depth is attained by a point of $\mathcal{B}(\J,F)$.
\end{prop}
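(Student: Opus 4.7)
The plan is to reduce the statement to the existence of unrefined minimal $K$-types as developed in \cite{MR1371680}. Recall that the depth is defined as an infimum over pairs $(x,r)$ with $\pi^{J_{x,r+}}\neq 0$, so the content of the proposition is that this infimum is actually attained at some point $x\in\mathcal{B}(\J,F)$. A first useful observation is that, for any fixed $x$, the set of jump values of the filtration $\{J_{x,r+}\}_{r\geq 0}$ is a discrete subset of $\mathbb{Q}_{\geq 0}$, since the Moy--Prasad filtrations are built from affine root data with values in a discrete rational set. Consequently, for fixed $x$, the set $\{r\mid \pi^{J_{x,r+}}\neq 0\}$ is an interval of the form $[r_{x},\infty)$, and $r_{x}$ itself lies in this discrete set whenever it is finite.

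Next, I would appeal to the construction of unrefined minimal $K$-types. Concretely, one shows that every irreducible smooth representation $\pi$ of $J$ contains a pair $(J_{x,r},\chi)$, where $r\in\mathbb{Q}_{\geq 0}$, $\chi$ is a representation of $J_{x,r}$ trivial on $J_{x,r+}$, and $\chi$ is \emph{non-degenerate} in the sense of Moy--Prasad (for $r=0$ this reduces to a cuspidal representation of the reductive quotient $J_{x,0:0+}$, and for $r>0$ it corresponds via $\mathfrak{j}_{x,r:r+}\cong J_{x,r:r+}$ to a non-degenerate coset in the dual of the abelian quotient). The genericity condition then forces $\pi^{J_{x,r+}}\neq 0$ while $\pi^{J_{y,s+}}=0$ for all $s<r$ and all $y$, so that $\depth(\pi)=r$ is realized at the point $x$.

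To construct such a minimal $K$-type, one starts from any $(y,s)$ with $\pi^{J_{y,s+}}\neq 0$, decomposes the nonzero $J_{y,s}/J_{y,s+}$-module $\pi^{J_{y,s+}}$ (which is abelian for $s>0$) into characters using the isomorphism with $\mathfrak{j}_{y,s:s+}$, and, if the resulting coset fails to be non-degenerate, uses a building-theoretic refinement (moving to a smaller facet containing $y$ in its closure, in the spirit of the descent arguments underlying DeBacker's parametrization) to produce a pair $(x,r)$ with $r<s$ whose associated datum appears in $\pi$. The main obstacle — and the heart of Moy--Prasad's argument — is to ensure that this refinement process terminates. Termination follows from the combination of the rational discreteness of the jump values of the filtrations, the finiteness of $J$-conjugacy classes of facets in $\mathcal{B}(\J,F)$, and the admissibility of $\pi$, which together prevent an infinite strictly decreasing sequence of candidate depths. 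Once termination is established, the terminal pair $(J_{x,r},\chi)$ is a minimal $K$-type, and its point $x$ is where the depth is attained.
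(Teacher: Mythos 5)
The paper gives no argument for this proposition; it is cited directly from Moy--Prasad \cite[Theorem 3.5]{MR1371680}, and your proposal attempts to reconstruct the proof inside that reference via unrefined minimal $K$-types. That is indeed the route the cited source takes, so the overall strategy is the right one. Two points, however, are glossed over in a way that leaves genuine gaps.

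First, the claim that a non-degenerate pair $(J_{x,r},\chi)$ occurring in $\pi$ forces $\pi^{J_{y,s+}}=0$ for all $s<r$ and all $y\in\mathcal{B}(\J,F)$ is not a formal consequence of the definition of non-degeneracy; it \emph{is} the central theorem of Moy--Prasad's theory of unrefined minimal $K$-types. As written, your sketch makes it look as if "genericity" yields this for free, whereas in fact it is precisely what one must prove (or else explicitly cite). Second, and more seriously, the termination argument does not close as stated. You invoke "rational discreteness of the jump values of the filtrations" plus finiteness of $J$-orbits of facets, but the jump set of $\{J_{x,r}\}_r$ for a \emph{general} point $x$ of a facet is of the form $\{\alpha(x)+\mathbb{Z}\}_{\alpha}$ and its denominators are not uniformly bounded as $x$ varies inside the facet; consequently the union of candidate depths over all points of the building is not obviously a closed discrete subset of $\mathbb{R}_{\geq0}$, and an infinite strictly decreasing sequence of candidate depths is not ruled out by these two ingredients alone. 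Moy--Prasad handle this by first showing one may restrict attention to the \emph{optimal points} of the building (in particular barycenters of facets), at which the jump values lie in $\tfrac{1}{e}\mathbb{Z}$ for a single integer $e$, and that the infimum defining $\depth(\pi)$ is unchanged under this restriction. That reduction step needs to appear before your discreteness-plus-finiteness argument can be invoked; also, admissibility of $\pi$ plays no role in termination. With those two corrections your outline agrees with the cited reference.
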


\begin{defn}\label{defn:pardepth}
For an $L$-parameter $\phi$ of $\J$, we define its depth to be 
\[
\depth(\phi)
:=
\inf \bigr\{r\in\R_{\geq0} \,\big\vert\, \text{$\phi|_{I_{F}^{r+}}$ is trivial}\bigr\}\in\R_{\geq0}.
\]
Here $I_{F}^{\bullet}$ is the ramification filtration of the inertia subgroup $I_{F}$ of $W_{F}$.
\end{defn}

In the case of general linear groups, the following \textit{depth preserving property} of the local Langlands correspondence is known:

\begin{thm}[{\cite[2.3.6]{MR2508720} and \cite[Proposition 4.5]{MR3579297}}]\label{thm:GL}
When $\J$ is $\GL_{N}$, for every $\pi\in\Pi(\GL_{N})$ and $\phi\in\Phi(\GL_{N})$ corresponding under the local Langlands correspondence for $\GL_{N}$, we have 
\[
\depth(\pi)=\depth(\phi).
\]
\end{thm}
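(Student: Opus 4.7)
The plan is to reduce Theorem \ref{thm:GL} to the supercuspidal case and then invoke an explicit description of the local Langlands correspondence for $\GL_N$ to match depths on both sides.

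The first step is the reduction to supercuspidals. By the Bernstein--Zelevinsky classification, every $\pi\in\Pi(\GL_N)$ arises as the Langlands quotient of a standard module parabolically induced from essentially square-integrable representations of the form $\mathrm{St}(\rho,m)$ with $\rho$ supercuspidal. A direct inspection of how Moy--Prasad filtrations behave under parabolic induction, and of how the depth of $\mathrm{St}(\rho,m)$ relates to that of $\rho$, gives $\depth(\pi)=\max_i\depth(\rho_i)$. On the Galois side, $\phi$ correspondingly decomposes as $\bigoplus_i\phi_i\otimes\mathrm{Sym}^{m_i-1}$ with each $\phi_i$ an irreducible representation of $W_F$, and since the $\mathrm{Sym}^{m-1}$ factor of the Deligne $\SL_2$ does not affect the restriction to $I_F^{r+}$, we obtain $\depth(\phi)=\max_i\depth(\phi_i)$. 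It therefore suffices to treat a supercuspidal $\pi$ together with an irreducible parameter $\phi$.

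Next, for a supercuspidal $\pi$, I would invoke the Bushnell--Kutzko classification: $\pi=\cInd_J^G\lambda$ for a maximal simple type $(J,\lambda)$ attached to a simple stratum $[\mathfrak{A},n,0,\beta]$, where $E=F[\beta]$ is a field extension of $F$ and $\mathfrak{A}$ is a hereditary $\mathcal{O}_F$-order in $M_N(F)$. Comparing $J$ with the Moy--Prasad subgroups $G_{x,r}$ at the point $x\in\B(\GL_N,F)$ associated to $\mathfrak{A}$ yields $\depth(\pi)=n/e(\mathfrak{A}|\mathcal{O}_F)$ (the depth-zero case corresponding to $n=0$). For the corresponding irreducible parameter, I would use Bushnell--Henniart's explicit realization of the LLC: up to a \emph{rectifier} twist, $\phi=\Ind_{W_E}^{W_F}\chi$ for an admissible character $\chi$ of $W_E$ encoded by $\beta$. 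By local class field theory on $E$ together with the Hasse--Arf theorem, the depth of $\chi$ with respect to the upper-numbering filtration on $W_E$ coincides with the Moy--Prasad depth of the corresponding character of $E^\times$, and induction from $W_E$ to $W_F$ rescales this by the ramification index $e(E/F)$, giving $\depth(\phi)=n/e(\mathfrak{A}|\mathcal{O}_F)$ as well.

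The main obstacle is the rectifier character appearing in Bushnell--Henniart's construction: it is a tame character of $E^\times$ needed to enforce compatibility with $\varepsilon$-factors, and to close the argument one must verify that its depth is strictly smaller than the depth of $\chi$, so that it does not perturb the depth of $\phi$. A secondary difficulty is the fully wild case $p\mid[E:F]$, where the essentially tame construction of Bushnell--Henniart does not apply directly; one then either invokes their refined construction for totally wild supercuspidals or proceeds indirectly via compatibility of the LLC with twists, base change, and automorphic induction, each of which preserves depth on both sides.
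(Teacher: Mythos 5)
The paper does not actually prove this theorem: it is imported from \cite[2.3.6]{MR2508720} and \cite[Proposition 4.5]{MR3579297}, so there is no internal proof to compare against. Your first step --- reducing to the supercuspidal case via the Bernstein--Zelevinsky/Langlands classification, the compatibility of depth with parabolic induction (Proposition \ref{prop:parab} in this paper), and the observation that the $\mathrm{Sym}^{m-1}$ factor coming from the Deligne $\SL_{2}$ is invisible to the inertia filtration --- is correct and is exactly how the cited reference proceeds.

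The supercuspidal step, however, contains a genuine gap that you have named but not closed. Bushnell--Henniart's parametrization $\phi=\Ind_{W_{E}}^{W_{F}}\chi$ by admissible pairs covers only the \emph{essentially tame} supercuspidals; in the fully wild case there is no explicit model of the parameter from which one can read off the ramification filtration, and the fallback ``via twists, base change, and automorphic induction'' is a heuristic, not an argument. Even in the essentially tame case you would still owe the identity $\depth(\cInd_{J}^{G}\lambda)=n/e(\mathfrak{A}|\mathcal{O}_{F})$ and the control of the rectifier. The standard way to treat all supercuspidals uniformly --- and the route taken in \cite{MR3579297} --- is through conductors rather than explicit models: for an irreducible $\phi$ of dimension $N$ the subspaces $\phi^{I_{F}^{u}}$ are $W_{F}$-stable, hence $0$ or all of $\phi$, so $\Swan(\phi)=N\cdot\depth(\phi)$; on the other side, Bushnell's computation of the conductor of a simple type gives $a(\pi)=N(1+\depth(\pi))$ for positive-depth supercuspidals (the depth-zero case being elementary); and since the local Langlands correspondence for $\GL_{N}$ preserves $\varepsilon$-factors of pairs by its characterization, $a(\pi)=a(\phi)$ and the depths agree. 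This argument needs no explicit description of the correspondence and no case division on wildness, so I would recommend replacing your second step by it.
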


\section{Semisimple descent for the Moy--Prasad filtrations of general linear groups}\label{sec:descent}

Let $\G$ be a general linear group over $F$.
We consider the involution $\theta$ on $\G$ as in Section \ref{sec:pre}.1.
In this section, we investigate the semisimple descent of the characteristic functions of the Moy--Prasad filtrations of parahoric subgroups of $G$.

Now let us recall the \textit{semisimple descent} of test functions supported on the topologically unipotent elements.
We define the map $\tc$ as follows:
\[
\tc\colon G\times G_{\theta} \rightarrow \w{G};\quad (g,x)\mapsto g\cdot(x\rtimes\theta)\cdot g^{-1}.
\]
Let $\mathcal{U}$ (resp.\ $\mathcal{U}_{r}$) be the image of $G\times G_{\theta,\tu}$ (resp.\ $G\times G_{\theta,r}$) under the map $\tc$.
Then the canonical inclusion
\[
G_{\theta,\tu}\hookrightarrow\mathcal{U};\, x\mapsto x\rtimes\theta
\]
induces a bijection $\Gamma(G_{\theta,\tu})\cong\Gamma(\mathcal{U})$ (see \cite[Corollary 4.0.4]{MR3709003}).
Similarly, for $r\in\R_{>0}$, we have a bijection $\Gamma(G_{\theta,r})\cong\Gamma(\mathcal{U}_{r})$.

\begin{defn}[Semisimple descent at $\theta$]
For $f\in C_{c}^{\infty}(\mathcal{U})$ and $f_{\theta}\in C_{c}^{\infty}(G_{\theta,\tu})$ (resp.\ $f\in C_{c}^{\infty}(\mathcal{U}_{r})$ and $f_{\theta}\in C_{c}^{\infty}(G_{\theta,r})$), we say that $f_{\theta}$ is a semisimple descent of $f$ if $I(f_{\theta})$ coincides with $I(f)$ as $\C$-valued functions on $\Gamma(G_{\theta,\tu})\cong\Gamma(\mathcal{U})$ (resp. $\Gamma(G_{\theta,r})\cong\Gamma(\mathcal{U}_{r})$).
Namely, for every strongly regular semisimple element $\gamma\in G_{\theta,\tu}$ (resp.\ $\gamma\in G_{\theta,r}$), we have
\[
I_{\gamma}(f_{\theta})
=
I_{\gamma\rtimes\theta}(f).
\]
\end{defn}
Here we note that, for every $\gamma\in G_{\theta,\tu}$, the centralizer $(G^{\theta})_{\gamma}$ of $\gamma$ in $G^{\theta}$ coincides with the centralizer $G_{\gamma\rtimes\theta}$ of $\gamma\rtimes\theta$ in $G$.
We use the same Haar measure on these centralizer groups in the above orbital integrals (see \cite[Definition 4.2.2]{MR3709003}).
Then we have the following:
\begin{prop}[{\cite[Lemma 10.4.2]{MR3709003}}]
As subsets of the set of $\C$-valued functions on $\Gamma(G_{\theta,\tu})\cong\Gamma(\mathcal{U})$, we have $\mathcal{I}(G_{\theta,\tu})=\mathcal{I}(\mathcal{U})$.
Similarly, we have $\mathcal{I}(G_{\theta,r})=\mathcal{I}(\mathcal{U}_{r})$.
\[
\xymatrix{
C_{c}^{\infty}(G_{\theta,\tu})\ar@{->>}[r]&\mathcal{I}(G_{\theta,\tu})\ar@{=}[d]\ar@{^{(}->}[r]&\{\Gamma(G_{\theta,\tu})\rightarrow\C\}\ar@{=}[d]\\
C_{c}^{\infty}(\mathcal{U})\ar@{->>}[r]&\mathcal{I}(\mathcal{U})\ar@{^{(}->}[r]&\{\Gamma(\mathcal{U})\rightarrow\C\}
}
\]
\end{prop}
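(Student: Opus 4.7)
The plan is to establish the two equalities $\mathcal{I}(G_{\theta,\tu}) = \mathcal{I}(\mathcal{U})$ and $\mathcal{I}(G_{\theta,r}) = \mathcal{I}(\mathcal{U}_{r})$ by constructing explicit descent and lifting maps between the spaces $C_c^{\infty}(G_{\theta,\tu})$ and $C_c^{\infty}(\mathcal{U})$ (and likewise for their filtered versions) that preserve orbital integrals under the canonical bijection $\Gamma(G_{\theta,\tu})\cong\Gamma(\mathcal{U})$.

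For the inclusion $\mathcal{I}(\mathcal{U})\subseteq\mathcal{I}(G_{\theta,\tu})$, given $f\in C_c^{\infty}(\mathcal{U})$, I would define a semisimple descent $f_{\theta}$ on $G_{\theta,\tu}$ by the fiber-integral formula
\[
f_{\theta}(x) \;:=\; \int_{G_{\theta}\backslash G} f\bigl(g^{-1}(x\rtimes\theta)g\bigr)\,d\dot{g},
\]
with measures normalized as in Section~\ref{sec:pre}.2. First I would check that the topological unipotence constraint on $x$ makes the integrand vanish outside a compact subset of $G_{\theta}\backslash G$, so that $f_{\theta}\in C_{c}^{\infty}(G_{\theta,\tu})$. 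Next I would unfold $I_{\gamma}(f_{\theta})$ using the fibration $(G_{\theta})_{\gamma}\backslash G\twoheadrightarrow G_{\theta}\backslash G$ with fiber $(G_{\theta})_{\gamma}\backslash G_{\theta}$ and compare the resulting double integral with $I_{\gamma\rtimes\theta}(f)$. Since the twisted centralizer $G_{\gamma\rtimes\theta}$ equals $(G_{\theta})_{\gamma}$ for strongly regular $\gamma\in G_{\theta,\tu}$ and the Weyl discriminants satisfy $|D_{\wG}(\gamma\rtimes\theta)|=|D_{G_{\theta}}(\gamma)|$ (coming from the twisted vs.\ untwisted adjoint actions on $\mathfrak{g}$ and $\mathfrak{g}_{\theta}$), the match $I_{\gamma}(f_{\theta})=I_{\gamma\rtimes\theta}(f)$ drops out.

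For the reverse inclusion $\mathcal{I}(G_{\theta,\tu})\subseteq\mathcal{I}(\mathcal{U})$, given $f_{\theta}\in C_{c}^{\infty}(G_{\theta,\tu})$, I would lift it to an $f\in C_c^{\infty}(\mathcal{U})$ via a cutoff construction: choose $\phi\in C_{c}^{\infty}(G)$ such that $\int_{G_{\theta}}\phi(hg)\,dh=1$ for all $g$ in a suitable neighborhood, and define $f$ on $\wG$ so that its $\tc$-pullback to $G\times G_{\theta,\tu}$ equals $\phi(g)\cdot f_{\theta}(x)$ (times an appropriate Jacobian; this is well-defined because $\tc$ is a submersion onto its image on the strongly regular locus, with generic fibers being $(G_{\theta})_{x}$-torsors). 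Applying the descent computation from the first direction to this $f$ recovers $f_{\theta}$, giving $I(f)=I(f_{\theta})$.

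The main obstacle will be the analytic verification that these descent and lifting operations preserve both smoothness and compact support; this requires careful control of the properness of $\tc$ restricted to $G\times G_{\theta,\tu}$, which I expect to follow from the fact that a topologically unipotent twisted element has its entire $G$-conjugacy class meeting $G_{\theta,\tu}\rtimes\theta$ in a compact set (modulo the centralizer). The filtration case $\mathcal{I}(G_{\theta,r})=\mathcal{I}(\mathcal{U}_{r})$ then follows by the identical argument, using the definition $\mathcal{U}_{r}=\tc(G\times G_{\theta,r})$ and the openness and $G_{\theta}$-invariance of $G_{\theta,r}$ inside $G_{\theta,\tu}$.
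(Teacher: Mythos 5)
The paper gives no proof of this proposition; it is quoted directly from \cite[Lemma 10.4.2]{MR3709003}, so the comparison is with the argument given there. Your overall shape --- descend orbital integrals in one direction, lift in the other, using $G_{\gamma\rtimes\theta}=(G_{\theta})_{\gamma}$ and the matching of Weyl discriminants on topologically unipotent elements --- is the right one, but two central steps do not work as written. The fiber integral $f_{\theta}(x)=\int_{G_{\theta}\backslash G}f(g^{-1}(x\rtimes\theta)g)\,d\dot{g}$ is not well-defined: replacing $g$ by $hg$ with $h\in G_{\theta}$ turns the integrand into $f(g^{-1}(h^{-1}xh\rtimes\theta)g)$, so for fixed $x$ the integrand does not descend to $G_{\theta}\backslash G$ (only the collection of its $G_{\theta}$-orbital integrals depends on $\dot{g}$ alone). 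The standard repair, and the one used in the source, is Harish-Chandra's submersion principle applied to $\tc\colon G\times G_{\theta,\tu}\to\mathcal{U}$ (submersivity is \cite[Lemma 4.0.6]{MR3709003}, invoked in Section \ref{sec:descent}): write $f=\tc_{*}\phi$ with $\phi\in C_{c}^{\infty}(G\times G_{\theta,\tu})$ and set $f_{\theta}(x)=\int_{G}\phi(g,x)\,dg$; this is Kottwitz's descent lemma \cite[Lemma 2.3]{MR2192014}, which this paper cites for exactly this purpose. Relatedly, your appeal to ``properness of $\tc$ restricted to $G\times G_{\theta,\tu}$'' cannot be rescued: $\tc$ is not proper, since its fiber over a strongly regular $\gamma\rtimes\theta$ is, modulo the $G_{\theta}$-action, a torsor under the noncompact torus $G_{\gamma\rtimes\theta}$. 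What is true is compactness of $\{g\in G : g^{-1}(\gamma\rtimes\theta)g\in\supp f\}$ modulo $G_{\gamma\rtimes\theta}=(G_{\theta})_{\gamma}$, and converting that into the convergence you need is precisely the content of the submersion-principle argument, together with the bijection $\Gamma(G_{\theta,\tu})\cong\Gamma(\mathcal{U})$ of \cite[Corollary 4.0.4]{MR3709003}.

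For the reverse inclusion $\mathcal{I}(G_{\theta,\tu})\subseteq\mathcal{I}(\mathcal{U})$, your cutoff construction has the same difficulty (a $\phi\in C_{c}^{\infty}(G)$ with $\int_{G_{\theta}}\phi(hg)\,dh=1$ exists only for $g$ in a compact-modulo-$G_{\theta}$ region, and one must still verify the orbital integrals of $\tc_{*}(\phi\otimes f_{\theta})$ against the noncompact fibers). The source bypasses this entirely: it proves the explicit statement that $\vol(K\cap G^{\theta})^{-1}\mathbbm{1}_{C_{\theta}}$ is a semisimple descent of $\vol(K)^{-1}\mathbbm{1}_{\tc(K,C_{\theta})}$ (Lemma \ref{lem:GV2} here), and then any $f_{\theta}\in C_{c}^{\infty}(G_{\theta,\tu})$ has the same orbital integrals as a finite linear combination of such characteristic functions, obtained by averaging $f_{\theta}$ over conjugation by a small $K\cap G^{\theta}$. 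Reducing to that lemma is both simpler and what the cited proof actually does; the filtered case $\mathcal{I}(G_{\theta,r})=\mathcal{I}(\mathcal{U}_{r})$ then follows as you say, using $\mathcal{U}_{r}=\tc(G\times G_{\theta,r})$.
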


We show a small lemma which will be needed in the next proposition:
\begin{lem}\label{lem:alcove}
We take the diagonal maximal $F$-split torus $\mathbf{T}$ in $\G$ and consider its $\theta$-fixed part $\mathbf{T}^{\theta}$, which is a maximal $F$-split torus in $\G_{\theta}$.
Then the fundamental alcove of the apartment $\mathcal{A}(\mathbf{T}^{\theta},F)$ in $\mathcal{B}(\G_{\theta},F)$ is contained in that of the apartment $\mathcal{A}(\mathbf{T},F)$ in $\mathcal{B}(\G,F)$.
\end{lem}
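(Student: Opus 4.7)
The plan is to combine the explicit description of $\theta$ on $\mathbf{T}$ with the $\theta$-equivariant identification $\mathcal{B}(\G_\theta, F) = \mathcal{B}(\G, F)^\theta$ discussed just before Proposition \ref{prop:twistMP}, and to reduce the lemma to the observation that the fundamental alcove of $\G$ is $\theta$-stable.

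First I would describe the action of $\theta$ on $\mathcal{A}(\mathbf{T}, F)$. Since $\theta$ acts on $\mathbf{T}$ by $(t_1, \ldots, t_N) \mapsto (t_N^{-1}, \ldots, t_1^{-1})$ (the Galois conjugation $c$ in the unitary case acts trivially on the $F$-split part of the diagonal torus), the induced action on $\mathcal{A}(\mathbf{T}, F) \cong \R^N$ sends $(x_1, \ldots, x_N)$ to $(-x_N, \ldots, -x_1)$, so that $\mathcal{A}(\mathbf{T}^\theta, F)$ is identified with the linear subspace $\{x \in \R^N : x_i + x_{N+1-i} = 0\}$. Next I would observe that the standard Iwahori $I_\G$ of $\G$ is $\theta$-stable, because both conjugation by $J_N$ and the inverse-transpose reverse the upper- and lower-triangular structure, and hence their composition preserves the upper-triangular-mod-$\varpi$ condition defining $I_\G$. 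Therefore $\theta$ permutes the affine simple roots of $\G$, whence the fundamental alcove $\mathbf{C}$ of $\G$ is $\theta$-stable, and $\mathbf{C}^\theta = \mathbf{C} \cap \mathcal{A}(\mathbf{T}^\theta, F)$ is a non-empty convex polytope contained in $\mathbf{C}$.

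Then I would appeal to Proposition \ref{prop:twistMP}, which identifies the parahoric subgroup of $\G_\theta$ at any point $x \in \mathcal{A}(\mathbf{T}^\theta, F)$ with $(\G_x)^\theta$. For $x$ in the interior of $\mathbf{C}^\theta$ we have $\G_x = I_\G$, and hence the parahoric of $\G_\theta$ at $x$ is $(I_\G)^\theta = I_\G \cap G_\theta$. A case-by-case inspection then confirms that this is the standard Iwahori of $\G_\theta$, so that $\mathbf{C}^\theta$ is precisely the fundamental alcove of $\G_\theta$ in the sense of the paper; the desired containment $\mathbf{C}_\theta = \mathbf{C}^\theta \subseteq \mathbf{C}$ follows by construction.

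The main obstacle is the case-by-case verification that $(I_\G)^\theta$ coincides with the standard Iwahori of $\G_\theta$ and that $\mathbf{C}^\theta$ is a full alcove rather than a proper subpolytope of one; this requires writing down explicit affine-root-group data for each of the four pairs $(\G, \G_\theta)$ of Section \ref{sec:pre}.1. In the ramified unitary case one must be particularly careful, because the affine root system of $\U_{E/F}(N)$ on the $F$-split apartment is non-reduced; however, the identification is standard via the description of the Bruhat--Tits building of $\U_{E/F}(N)$ as the fixed-point building of $\mathrm{Res}_{E/F}\GL_N$.
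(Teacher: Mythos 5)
Your strategy is genuinely different from the paper's. The paper simply lists the simple affine roots $\Pi_{\G}$ and $\Pi_{\G_\theta}$ in each of the four cases and verifies directly, by expressing each $\alpha\in\Pi_\G$ restricted to $\mathcal{A}(\mathbf{T}^\theta,F)$ as a nonnegative combination of $\Pi_{\G_\theta}$ plus a nonnegative constant, that $x\in\mathbf{C}_\theta$ forces $x\in\mathbf{C}$. You instead reduce to the $\theta$-stability of the Iwahori $I_\G$ (giving $\mathbf{C}^\theta:=\mathbf{C}\cap\mathcal{A}(\mathbf{T}^\theta,F)\subseteq\mathbf{C}$), and then try to show that $\mathbf{C}^\theta$ equals the fundamental alcove $\mathbf{C}_\theta$ of $\G_\theta$. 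The framing is appealing, but as written it does not close the argument, for the following reasons.

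First, there is a logical gap in the step ``$(I_\G)^\theta$ is the standard Iwahori of $\G_\theta$, so that $\mathbf{C}^\theta$ is precisely the fundamental alcove of $\G_\theta$.'' Knowing that the parahoric of $\G_\theta$ at every interior point of $\mathbf{C}^\theta$ is the Iwahori only shows $\mathbf{C}^\theta\subseteq\mathbf{C}_\theta$; it does not exclude $\mathbf{C}^\theta$ being a proper open subpolytope of $\mathbf{C}_\theta$. You acknowledge this separately at the end (``and that $\mathbf{C}^\theta$ is a full alcove rather than a proper subpolytope''), but that second claim is exactly equivalent to the containment $\mathbf{C}_\theta\subseteq\mathbf{C}^\theta$, which together with the trivial $\mathbf{C}^\theta\subseteq\mathbf{C}$ is precisely what the paper's case-by-case affine-root computation establishes. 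In other words, the reformulation does not shrink the case-by-case burden; the hard part is outsourced to a check of the same order of difficulty, and to verify it one must in any case write down the affine root data of $\G_\theta$ on $\mathcal{A}(\mathbf{T}^\theta,F)$, just as the paper does. This is genuinely nontrivial: the affine walls of $\G_\theta$ are in general not restrictions of affine walls of $\G$ (for instance, in the ramified $\U_{E/F}(2n+1)$ case one of the simple affine roots is $f_n+\tfrac14$, whose wall does not come from any $\GL_N$ wall under the normalization the paper uses), so the naive intuition that the two chamber decompositions of $\mathcal{A}(\mathbf{T}^\theta,F)$ coincide is false and the ``full alcove'' claim needs genuine care in each case.

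Second, two technical imprecisions would need repair. You invoke Proposition \ref{prop:twistMP} to identify the parahoric $G_{\theta,x,0}$ with $(G_{x,0})^\theta$, but that proposition is stated (and proved via the Cayley transform) only for the filtration pieces with $r>0$; the $r=0$ statement is the Prasad--Yu type identification referenced in \cite[Remark 10.2.2]{MR3709003} and requires a separate citation. Also, the equation $(I_\G)^\theta=I_\G\cap G_\theta$ fails in case (1): there $\G^\theta=\mathrm{O}_{2n+1}\supsetneq\SO_{2n+1}=\G_\theta$ and $-I_{2n+1}\in(I_\G)^\theta\setminus G_\theta$, so $(I_\G)^\theta$ strictly contains $I_\G\cap G_\theta$; the correct group to compare with the standard Iwahori of $\G_\theta$ is $I_\G\cap G_\theta$, not $(I_\G)^\theta$.

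In summary, the idea of using $\theta$-stability of $I_\G$ and the building identification is a legitimate conceptual alternative, and the first half of your argument ($\mathbf{C}^\theta\subseteq\mathbf{C}$) is correct. But the second half, $\mathbf{C}^\theta=\mathbf{C}_\theta$, carries the entire content of the lemma and is only asserted, not proved; filling it in essentially reproduces the paper's affine-root computation, so nothing is gained, and the two technical points above must be fixed along the way.
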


\begin{proof}
We check the assertion by a case-by-case computation.
See, for example, \cite[Section 10.1]{MR0327923} for a description of affine roots of classical groups.

In the case of (1), we have $\G=\GL_{2n+1}$ and $\G_{\theta}=\SO_{2n+1}$.
We identify $\mathcal{A}(\mathbf{T},F)$ with $X_{\ast}(\mathbf{T})\otimes_{\Z}\R$, and $\mathcal{A}(\mathbf{T}^{\theta},F)$ with $X_{\ast}(\mathbf{T}^{\theta})\otimes_{\Z}\R=(X_{\ast}(\mathbf{T})\otimes_{\Z}\R)^{\theta}$.
If we write $e_{i}\in X^{\ast}(\mathbf{T})$ for the $i$-th projection from $\mathbf{T}$ to $\mathbb{G}_{m}$, then $X^{\ast}(\mathbf{T}^{\theta})\otimes_{\Z}\R$ is spanned by $f_{i}:=e_{i}-e_{2n+2-i}$.
Then the sets of simple affine roots with respect to the fundamental alcoves are given by
\[
\Pi_{\G}=
\{e_{1}-e_{2},\ldots,e_{2n}-e_{2n+1}, e_{2n+1}-e_{1}+1\}\text{ and}
\]
\[
\Pi_{\G_{\theta}}=
\{f_{1}-f_{2},\ldots,f_{n-1}-f_{n}, f_{n}, -f_{1}-f_{2}+1\}.
\]
The fundamental alcoves of $\mathcal{A}(\mathbf{T},F)$ and $\mathcal{A}(\mathbf{T}^{\theta},F)$ are described as 
\[
\{x\in X_{\ast}(\mathbf{T})\otimes_{\Z}\R \mid \text{$\alpha(x)>0$ for every $\alpha\in\Pi_{\G}$}\}\text{ and}
\]
\[
\{x\in X_{\ast}(\mathbf{T}^{\theta})\otimes_{\Z}\R \mid \text{$\alpha(x)>0$ for every $\alpha\in\Pi_{\G_{\theta}}$}\}.
\]
Therefore, in order to show the assertion, we have to check the following:
for every $x\in X_{\ast}(\mathbf{T}^{\theta})\otimes_{\Z}\R$, if $\alpha(x)>0$ for every $\alpha\in\Pi_{\G_{\theta}}$, then we have $\alpha(x)>0$ for every $\alpha\in\Pi_{\G}$.
Let $x\in X_{\ast}(\mathbf{T}^{\theta})\otimes_{\Z}\R$ (note that we have $\theta(x)=x$).
Then, for $1\leq i < n$, we have 
\begin{align*}
(e_{i}-e_{i+1})(x)
&=
\frac{(e_{i}-e_{i+1})(x)+(e_{i}-e_{i+1})(\theta(x))}{2}\\
&=
\frac{(e_{i}-e_{i+1})(x)+\theta(e_{i}-e_{i+1})(x)}{2}\\
&=
\frac{(e_{i}-e_{i+1})(x)+(-e_{2n+2-i}+e_{2n+1-i})(x)}{2}
=
\frac{(f_{i}-f_{i+1})(x)}{2}.
\end{align*}
For $i=n$, we have
\[
(e_{n}-e_{n+1})(x)
=
f_{n}(x)
\]
Moreover, for $n<i\leq2n$, we have
\[
(e_{i}-e_{i+1})(x)
=
(e_{i}-e_{i+1})(\theta(x))
=
\theta(e_{i}-e_{i+1})(x)
=
(e_{2n+1-i}-e_{2n+2-i})(x).
\]
Finally, we have
\[
(e_{2n+1}-e_{1}+1)(x)
=
\bigl((-f_{1}-f_{2}+1)+e_{2}-e_{2n}\bigr)(x).
\]
Therefore, if $\alpha(x)>0$ for every $\alpha\in\Pi_{\G_{\theta}}$, then we have $\alpha(x)>0$ for every $\alpha\in\Pi_{\G}$.

In the cases of (2) and (3), we have $\G=\GL_{2n}$ and $\G_{\theta}=\Sp_{2n}$.
Then, in the same usage of notations as above, we have
\[
\Pi_{\G}=
\{e_{1}-e_{2},\ldots,e_{2n-1}-e_{2n}, e_{2n}-e_{1}+1\}\text{ and}
\]
\[
\Pi_{\G_{\theta}}=
\{f_{1}-f_{2},\ldots,f_{n-1}-f_{n}, 2f_{n}, -2f_{1}+1\},
\]
where $f_{i}=e_{i}-e_{2n+1-i}$.
For $x\in X_{\ast}(\mathbf{T}^{\theta})\otimes_{\Z}\R$, we have
\begin{align*}
(e_{i}-e_{i+1})(x)
&=
\begin{cases}
\frac{(f_{i}-f_{i+1})(x)}{2} & 1\leq i <n,\\
\frac{(f_{2n-i}-f_{2n-i+1})(x)}{2} & n+1\leq i \leq2n-1,
\end{cases}\\
(e_{n}-e_{n+1})(x)
&=
\frac{2f_{n}}{2}(x), \text{ and}\\
(e_{2n}-e_{1}+1)(x)
&=
\bigl((-2f_{1}+1)+e_{1}-e_{2n}\bigr)(x).
\end{align*}
Thus the assertion follows.

Finally, we consider the case of (4).
If $E$ is unramified over $F$, then we have
\[
\Pi_{\G}=
\{e_{1}-e_{2},\ldots,e_{N-1}-e_{N}, e_{N}-e_{1}+1\},\text{ and}
\]
\[
\Pi_{\G_{\theta}}=
\begin{cases}
\{f_{1}-f_{2},\ldots,f_{n-1}-f_{n}, 2f_{n}, -2f_{1}+1\} & N=2n,\\
\{f_{1}-f_{2},\ldots,f_{n-1}-f_{n}, f_{n}, -2f_{1}+1\} & N=2n+1,
\end{cases}
\]
where $f_{i}=e_{i}-e_{N+1-i}$.
Thus we can show the assertion by the same computation as in the cases of (2) and (3).
If $E$ is ramified over $F$, then we have
\[
\Pi_{\G}=
\biggl\{e_{1}-e_{2},\ldots,e_{N-1}-e_{N}, e_{N}-e_{1}+\frac{1}{2}\biggr\}, \text{ and}
\]
\[
\Pi_{\G_{\theta}}=
\begin{cases}
\{f_{1}-f_{2},\ldots,f_{n-1}-f_{n}, 2f_{n}, -f_{1}-f_{2}+\frac{1}{2}\} & N=2n,\\
\{f_{1}-f_{2},\ldots,f_{n-1}-f_{n}, f_{n}+\frac{1}{4}, -2f_{1}\} & N=2n+1.
\end{cases}
\]
In this case, we can express $e_{N}-e_{1}+\frac{1}{2}$ by using positive affine roots of $\Pi_{\G_{\theta}}$ and positive constants as follows:
\[
e_{N}-e_{1}+\frac{1}{2}
=-f_{1}+\frac{1}{2}
=
\begin{cases}
(-f_{1}-f_{2}+\frac{1}{2})+\frac{2f_{2}}{2} & N=2n,\\
-2f_{1}+(f_{1}+\frac{1}{4})+\frac{1}{4} & N=2n+1.
\end{cases}
\]
Thus we can show the assertion by the same way as in the previous cases.
\end{proof}

The following is the most essential proposition in this paper:

\begin{prop}[{generalization of \cite[Lemma 4.2.4 (i)]{MR3709003}}]\label{prop:GV1}
Let $x\in\B(\G_{\theta},F)$ and $r\in\R_{>0}$.
Then we have $\tc(G_{x,r},G_{\theta,x,r})=G_{x,r}\rtimes\theta$.
\end{prop}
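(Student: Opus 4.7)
The plan is to prove $\tc(G_{x,r},G_{\theta,x,r})=G_{x,r}\rtimes\theta$ by two inclusions. For $\subseteq$: since $x\in\B(\G_\theta,F)\cong\B(\G,F)^\theta$ is $\theta$-fixed, $\theta$ preserves $G_{x,r}$, and $G_{\theta,x,r}\subseteq G_{x,r}$ by Proposition \ref{prop:twistMP}; hence $\tc(g,y)=gy\theta(g)^{-1}\rtimes\theta\in G_{x,r}\rtimes\theta$ for all $g\in G_{x,r}$ and $y\in G_{\theta,x,r}$. The nontrivial direction is to show that the map $\mu\colon G_{x,r}\times G_{\theta,x,r}\to G_{x,r}$, $(g,y)\mapsto gy\theta(g)^{-1}$, is surjective. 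My strategy is to linearize via the Cayley transform and then lift by successive approximation in the pro-$p$ group $G_{x,r}$.

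\textbf{Linearization at the graded level.} Fix a jump $s\geq r$ of the Moy--Prasad filtration. Proposition \ref{prop:Cayley-properties} gives $\theta\circ\mfc=\mfc\circ d\theta$ and $\mfc(W)^{-1}=\mfc(-W)$, so for $Z\in\mfg_{x,s}$ and $Y\in\mfg_{\theta,x,s}$ one has $\mfc(Z)\mfc(Y)\theta(\mfc(Z))^{-1}=\mfc(Z)\mfc(Y)\mfc(-d\theta(Z))$. Higher-order corrections in the product of three Cayley transforms lie in $\mfg_{x,2s}\subseteq\mfg_{x,s+}$ (using $s>0$), so modulo $\mfg_{x,s+}$ this element has Cayley coordinate $(1-d\theta)(Z)+Y$. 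By Proposition \ref{prop:twistMP} and the assumption $p\neq 2$, the graded piece splits $d\theta$-equivariantly as $\mfg_{x,s:s+}=\mfg_{\theta,x,s:s+}\oplus(\mfg_{x,s:s+})^{-d\theta}$; the operator $1-d\theta$ vanishes on the first summand (which is hit by $Y$) and acts as multiplication by $2$ on the second (a bijection since $p\neq 2$). Together with Proposition \ref{prop:Cayleyhomeo}, this shows that the induced map $G_{x,s}\times G_{\theta,x,s}\to G_{x,s:s+}$ is surjective at every jump $s\geq r$.

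\textbf{Successive approximation.} Let $r=s_0<s_1<\cdots$ be the successive jumps. Given $h\in G_{x,r}$, I inductively construct $(g^{(n)},y^{(n)})\in G_{x,r}\times G_{\theta,x,r}$ with $\mu(g^{(n)},y^{(n)})\equiv h\pmod{G_{x,s_n}}$ and with $g^{(n+1)}(g^{(n)})^{-1}\in G_{x,s_n}$ and $y^{(n+1)}(y^{(n)})^{-1}\in G_{\theta,x,s_n}$; compactness of the pro-$p$ groups then yields limits $g,y$ with $\mu(g,y)=h$. For the inductive step, write $\mu(g^{(n)},y^{(n)})=h\cdot u$ with $u\in G_{x,s_n}$ and try an update $(g^{(n)}\tilde g,y^{(n)}\tilde y)$ with $(\tilde g,\tilde y)\in G_{x,s_n}\times G_{\theta,x,s_n}$. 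Reorganizing the resulting product, the non-commutativity corrections take the form of commutators lying in $[G_{x,r},G_{x,s_n}]\subseteq G_{x,r+s_n}\subseteq G_{x,s_n+}$ (this uses $r>0$). Modulo $G_{x,s_n+}$, the inductive step therefore reduces to finding $(\tilde g,\tilde y)$ with $\mu(\tilde g,\tilde y)\equiv u^{-1}\pmod{G_{x,s_n+}}$, which is exactly the graded surjectivity established above.

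\textbf{Main obstacle.} The main technical point is controlling the non-commutative corrections in each stage of the approximation. This hinges on the standard commutator estimate $[G_{x,a},G_{x,b}]\subseteq G_{x,a+b}$ for Moy--Prasad filtrations together with $r>0$, so that commutators with the ``running'' elements $g^{(n)}, y^{(n)}\in G_{x,r}$ land in strictly deeper filtration and vanish in the relevant graded piece. Lemma \ref{lem:alcove} is available to reduce, by $G_\theta$-equivariance of both sides of the proposed equality, to $x$ lying in the intersection of the closures of the fundamental alcoves of $\B(\G_\theta,F)$ and $\B(\G,F)$; at such an $x$ the affine root decomposition of $\mfg_{x,s:s+}$ and its $d\theta$-eigenspace splitting can be written down explicitly, which makes the verification of graded surjectivity fully transparent.
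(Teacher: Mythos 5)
Your argument is correct, but it takes a genuinely different route from the paper's. The paper starts from the submersivity of $\tc$ plus compactness to extract a crude bound of the form $\tc(G_{x,r},G_{\theta,x,r})\supset\mfc(\mfg_{\theta,x,r}\oplus\varpi^{m+1}\mfg_{x,r}^{d\theta=-1})\rtimes\theta$, and then runs a two-level induction: an inner induction on $k$ that pushes the $-1$ eigenspace component into $\varpi^{m}\mfg_{x,r}^{k}$ by explicit $\theta$-conjugations $y'=\mfc(\tfrac12 X_{2})$ and power-series manipulations with the Cayley transform, and an outer reverse induction on $m$ that is closed by the estimate $\mfg_{x,r}^{(e+1)N}\subset\varpi\mfg_{x,r}$; it is precisely for this last estimate that Lemma \ref{lem:alcove} and the Iwahori lattice $\mfi$ with its element $\varphi$ are needed. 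You instead prove the graded surjectivity of $(g,y)\mapsto gy\theta(g)^{-1}$ on each quotient $G_{x,s:s+}$ directly — which amounts to the surjectivity of $Z\mapsto(1-d\theta)(Z)$ onto $(\mfg_{x,s:s+})^{-d\theta}$, using $p\neq 2$ and the $d\theta$-equivariant splitting of Proposition \ref{prop:twistMP} — and then lift by successive approximation along the filtration jumps, with non-commutativity controlled by the commutator estimate $[G_{x,a},G_{x,b}]\subset G_{x,a+b}$. This avoids the submersivity/compactness step, Lemma \ref{lem:alcove}, the Iwahori structure, and the $k$-fold product analysis entirely, at the (small) cost of invoking the standard Moy--Prasad commutator estimate, which the paper never explicitly cites. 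Two cosmetic remarks: in your inductive step the element to be hit is not $u^{-1}$ itself but the conjugate $u'=\theta(g^{(n)})^{-1}u^{-1}\theta(g^{(n)})\in G_{x,s_{n}}$, obtained after moving $g^{(n)}, y^{(n)}\in G_{x,r}$ past $\tilde g,\tilde y$ modulo $G_{x,s_{n}+}$ (this does not matter since the graded surjectivity hits any target); and the appeal to Lemma \ref{lem:alcove} in your final paragraph is unnecessary, since graded surjectivity already follows from the eigenspace splitting without any affine root bookkeeping.
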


\begin{proof}
%
We put $E_{0}=F$ if $\G=\GL_{N}$, and $E_{0}=E$ if $\G=\mathrm{Res}_{E/F}\GL_{N}$.
Let $e$ be the ramification index of the extension $E_{0}/F$.
We fix a uniformizer $\varpi$ of $E_{0}$ such that $\varpi^{e}$ belongs to $F^{\times}$ (namely, $\varpi^{e}$ is a uniformizer of $F$).

First, since $G_{\theta}$ acts on the set of alcoves of $\mathcal{B}(\G_{\theta},F)$ transitively (see, e.g., \cite[Remark 2]{MR2435422}), we may assume that $x$ is contained in the fundamental alcove of $\mathcal{B}(\G_{\theta},F)$ by taking $G_{\theta}$-conjugation.
On the other hand, the inclusion $\mathcal{B}(\G_{\theta},F)\subset\mathcal{B}(\G,{F})$ maps the fundamental alcove of $\mathcal{B}(\G_{\theta},F)$ into that of $\mathcal{B}(\G,F)$ by Lemma \ref{lem:alcove}.
Namely, we may assume that $\mfg_{x,0}$ is contained in the standard Iwahori sublattice $\mfi$ of $\mfg=\mathfrak{gl}_{N}(E_{0})$:
\[
\mfi = \begin{pmatrix}
 \mathcal{O}_{E_{0}}&&\mathcal{O}_{E_{0}}\\
 &\ddots&\\
 \mathfrak{p}_{E_{0}}&&\mathcal{O}_{E_{0}}
\end{pmatrix}.
\]
In particular, we may assume that $\mfg_{x,r}$ is contained in $\mfi_{s-1}$.
Here $\mfi_{\bullet}$ is the Moy--Prasad filtration of $\mfi$ attached to the barycenter of the fundamental alcove of $\mathcal{B}(\G,{F})$ and $s\in\Z_{>0}$ is the integer satisfying $s-1<r\leq s$.
Then we have the following chain of lattices:
\[
\mfi_{s-1+}\supset\mfg_{x,r}\supset\mfi_{s}.
\]
We note that, if we put
\[
\varphi
:=
\begin{pmatrix}
0&1&\hdots&0\\
\vdots&\ddots&\ddots&\vdots\\
0&&\ddots&1\\
\varpi&0&\hdots&0
\end{pmatrix},
\]
then we have $\mfi_{s-1+}=\varphi\cdot\mfi_{s-1}$ and $\mfi_{s}=\varpi^{e}\cdot\mfi_{s-1}=\varphi^{eN}\cdot\mfi_{s-1}$.

Now let us prove the assertion.
We follow the proof of \cite[Lemma 4.2.4 (i)]{MR3709003}.
Since the inclusion $\tc({G}_{x,r},G_{\theta,x,r})\subset{G}_{x,r}\rtimes\theta$ is clear, our task is to show the converse inclusion $\tc({G}_{x,r},G_{\theta,x,r})\supset{G}_{x,r}\rtimes\theta$.
We recall that, by Proposition \ref{prop:Cayleyhomeo}, $\mfc$ defines a homeomorphism from $\mfg_{x,r}$ to $G_{x,r}$.
Hence it is enough to show that $\tc({G}_{x,r},G_{\theta,x,r})\supset\mfc(\mfg_{x,r})\rtimes\theta$.

Since $\mfg_{x,r}$ is $\theta$-stable, we can consider the eigenspace decomposition of $\mfg_{x,r}$ with respect to $d\theta$ (note that we can always take such a decomposition since the order of $d\theta$ is $2$ and $2$ is invertible in the ring of integers $\mcO_{E_{0}}$ by the assumption that $p$ is not equal to $2$).
We denote the eigenspace with the eigenvalue $-1$ by $\mfg_{x,r}^{d\theta=-1}$.
Then we have $\mfg_{x,r}=\mfg_{\theta,x,r}\oplus\mfg_{x,r}^{d\theta=-1}$.

By the submersivity of the map $\tc$ (\cite[Lemma 4.0.6]{MR3709003}), $\tc({G}_{x,r},G_{\theta,x,r})$ is an open subset of ${G}_{x,r}\rtimes\theta=\mfc(\mfg_{x,r})\rtimes\theta$.
Combining this with the compactness of $\mfg_{\theta,x,r}$, we can take an integer $m\in\Z_{\geq0}$ such that 
$\tc({G}_{x,r},G_{\theta,x,r})\supset\mfc(\mfg_{\theta,x,r}\oplus\varpi^{m+1}\mfg_{x,r}^{d\theta=-1})\rtimes\theta$.
Indeed, for every $W\in \mfg_{\theta,x,r}$, we can take a positive integer $m_{W}$ satisfying $\tc({G}_{x,r},G_{\theta,x,r})\supset (W+\varpi^{m_{W}}\mfg_{\theta,x,r})\oplus\varpi^{m_{W}}\mfg_{x,r}^{d\theta=-1}$ by the openness of $\tc({G}_{x,r},G_{\theta,x,r})$.
Then we have 
\[
\tc({G}_{x,r},G_{\theta,x,r})
\supset
\bigcup_{W\in\mfg_{\theta,x,r}} \bigl((W+\varpi^{m_{W}}\mfg_{\theta,x,r})\oplus\varpi^{m_{W}}\mfg_{x,r}^{d\theta=-1}\bigr)\rtimes\theta
\supset
\mfg_{\theta,x,r}\rtimes\theta.
\]
On the other hand, by the compactness of $\mfg_{\theta,x,r}$, we can take a finite subset $\{W_{i}\}_{i=1}^{n}$ satisfying
\[
\bigcup_{i=1}^{n} (W_{i}+\varpi^{m_{W_{i}}}\mfg_{\theta,x,r})
\supset
\mfg_{\theta,x,r}.
\]
Thus, if we put $m+1$ to be the maximum of $\{m_{W_{i}}\}_{i=1}^{n}$, then we have
\begin{align*}
\tc({G}_{x,r},G_{\theta,x,r})
&\supset
\bigcup_{i=1}^{n} \bigl((W_{i}+\varpi^{m_{W_{i}}}\mfg_{\theta,x,r})\oplus\varpi^{m_{W_{i}}}\mfg_{x,r}^{d\theta=-1}\bigr)\rtimes\theta\\
&\supset
\bigcup_{i=1}^{n} \bigl((W_{i}+\varpi^{m_{W_{i}}}\mfg_{\theta,x,r})\oplus\varpi^{m+1}\mfg_{x,r}^{d\theta=-1}\bigr)\rtimes\theta\\
&\supset
(\mfg_{\theta,x,r}\oplus\varpi^{m+1}\mfg_{x,r}^{d\theta=-1})\rtimes\theta.
\end{align*}

From this, we will show that $\tc({G}_{x,r},G_{\theta,x,r})\supset\mfc(\mfg_{\theta,x,r}\oplus\varpi^{m}\mfg_{x,r}^{d\theta=-1})\rtimes\theta$.
If we can show this, then we get $\tc({G}_{x,r},G_{\theta,x,r})\supset\mfc(\mfg_{x,r})\rtimes\theta$ by the reverse induction on $m$.

Now we take an element $g\in\mfc(\mfg_{\theta,x,r}\oplus\varpi^{m}\mfg_{x,r}^{d\theta=-1})$.
In order to show that $\tc({G}_{x,r},G_{\theta,x,r})\ni g\rtimes\theta$, we first show the following claim:
\begin{claim*}
For any $k\in\Z_{>0}$, there exists $y\in{G}_{x,r}$ satisfying
\[
y^{-1}g\theta(y)\in\mfc(\mfg_{\theta,x,r}+\varpi^{m}\mfg_{x,r}^{k}),
\]
where
\[
\mfg_{x,r}^{k}
:=
\underbrace{\mfg_{x,r}\cdots\mfg_{x,r}}_{k}
:=
\Span_{\mcO_{E_{0}}}\{Z_{1}\cdots Z_{k}\mid Z_{1},\ldots,Z_{k}\in\mfg_{x,r}\}
\]
(note that $\mfg_{x,r}^{k}$ is an $\mathcal{O}_{E_{0}}$-submodule of $\mfg_{x,r}$ since we have $\mfg_{x,r}^{2}\subset\mfg_{x,r}$, see \cite[Lemma 10.2.3 (d)]{MR3709003}).
\end{claim*}

\begin{proof}[Proof of Claim.]
We show this claim by induction on $k$.
If $k=1$, the assertion in obvious since we have 
\[
\mfc(\mfg_{\theta,x,r}+\varpi^{m}\mfg_{x,r}^{1})
=
\mfc\bigl(\mfg_{\theta,x,r}+\varpi^{m}(\mfg_{\theta,x,r}\oplus\mfg_{x,r}^{d\theta=-1})\bigr)
=
\mfc(\mfg_{\theta,x,r}\oplus\varpi^{m}\mfg_{x,r}^{d\theta=-1})
\]
and $g$ already belongs to this set.

Next we assume the assertion for $k$, and show the assertion for $k+1$.
By the induction hypothesis (the assertion for $k$), we can take an element $y\in{G}_{x,r}$ satisfying
\[
y^{-1}g\theta(y)\in\mfc(\mfg_{\theta,x,r}+\varpi^{m}\mfg_{x,r}^{k}).
\]
We take $X_{1}\in\mfg_{\theta,x,r}$ and $X_{2}\in\mfg_{x,r}^{d\theta=-1}\cap\varpi^{m}\mfg_{x,r}^{k}$ satisfying $y^{-1}g\theta(y)=\mfc(X_{1}+X_{2})\in\mfc(\mfg_{\theta,x,r}\oplus\varpi^{m}\mfg_{x,r}^{d\theta=-1})$ (note that $\mfg_{x,r}^{k}$ is $\theta$-stable, hence we can take such $X_{2}$).
It is enough to find an element $y'\in{G}_{x,r}$ satisfying
\[
y'^{-1}y^{-1}g\theta(y)\theta(y')\in\mfc(\mfg_{\theta,x,r}+\varpi^{m}\mfg_{x,r}^{k+1}).
\]

We put $X:=X_{1}+X_{2}$ and $Y:=\frac{1}{2}X_{2}$.
We show that $y':=\mfc(Y)$ satisfies the above condition.
We first note that, for any $W\in\mfg_{\tn}$, the power series expansion of $\mfc(W)$ is given by
\[
\mfc(W)
=\frac{1+\frac{W}{2}}{1-\frac{W}{2}}
=1+W+2\cdot\biggl(\frac{W}{2}\biggr)^{2}+2\cdot\biggl(\frac{W}{2}\biggr)^{3}+\cdots.
\]
Thus, by noting that $\varpi^{m}\mfg_{x,r}^{k+1}$ is closed in $\mfg$, hence complete, we have
\[
\mfc(X)\in\mfc(X_{1})+X_{2}+\varpi^{m}\mfg_{x,r}^{k+1} 
\quad\text{and}\quad
\mfc(-Y)\in 1-Y+\varpi^{m}\mfg_{x,r}^{k+1}.
\]
On the other hand, since $\theta$ and $\mfc$ commutes (Proposition \ref{prop:Cayley-properties}) and $d\theta$ acts on $Y$ via $(-1)$-multiplication, we have $\theta(\mfc(Y))=\mfc(-Y)$.
Thus we have
\begin{align*}
y'^{-1}\cdot \mfc(X)\cdot \theta(y')
&=\mfc(-Y)\cdot \mfc(X) \cdot \mfc(-Y)\\
&\in(1-Y)\bigl(\mfc(X_{1})+X_{2}\bigr)(1-Y)+\varpi^{m}{\mfg}^{k+1}_{x,r}\\
&=\mfc(X_{1})+X_{2}-2Y+\varpi^{m}{\mfg}^{k+1}_{x,r}\\
&=\mfc(X_{1})+\varpi^{m}{\mfg}^{k+1}_{x,r}
\end{align*}
(note that we have $Y\mfc(X_{1})$ and $\mfc(X_{1})Y$ belong to $Y+\varpi^{m}{\mfg}^{k+1}_{x,r}$).
Since $X$ and $-Y$ belong to ${\mfg}_{x,r}$, their images $\mfc(X)$ and $\mfc(-Y)$ belong to $G_{x,r}$ by Proposition \ref{prop:Cayleyhomeo}.
In particular, the product $\mfc(-Y)\cdot \mfc(X) \cdot \mfc(-Y)$ lies in ${G}_{x,r}=\mfc(\mfg_{x,r})$.
If we put 
\[
\mfc(-Y)\cdot \mfc(X) \cdot \mfc(-Y)=\mfc(X_{1})+Z
\]
 for $Z\in\varpi^{m}\mfg_{x,r}^{k+1}$, then, by Proposition \ref{prop:Cayley-properties} (1), the inverse image of $\mfc(-Y)\cdot \mfc(X) \cdot \mfc(-Y)$ via $\mfc$ is given by
\begin{align*}
\mfc^{-1}\bigl(\mfc(X_{1})+Z\bigr)
&=\frac{2(\mfc(X_{1})+Z-1)}{\mfc(X_{1})+Z+1}
=\frac{\mfc(X_{1})+Z-1}{1+\frac{(\mfc(X_{1})+Z-1)}{2}}\\
&=\bigl(\mfc(X_{1})+Z-1\bigr)\biggl(1-\frac{\mfc(X_{1})+Z-1}{2}+\frac{(\mfc(X_{1})+Z-1)^{2}}{2^{2}}-\cdots\biggr)\\
&\in\bigl(\mfc(X_{1})-1\bigr)\biggl(1-\frac{\mfc(X_{1})-1}{2}+\frac{(\mfc(X_{1})-1)^{2}}{2^{2}}-\cdots\biggr)+\varpi^{m}\mfg_{x,r}^{k+1}\\
&=\mfc^{-1}\bigl(\mfc(X_{1})\bigr)+\varpi^{m}\mfg_{x,r}^{k+1}\\
&=X_{1}+\varpi^{m}\mfg_{x,r}^{k+1}.
\end{align*}
Therefore the $\theta$-conjugated element $y'^{-1}\cdot \mfc(X)\cdot \theta(y')$ belongs to $\mfc(\mfg_{\theta,x,r}+\varpi^{m}\mfg_{x,r}^{k+1})$.
This completes the proof of the claim.

\end{proof}

Now we back to the proof of Proposition \ref{prop:GV1}.
By the above claim for $k=(e+1)N$, we can find an element $y\in{G}_{x,r}$ satisfying
\[
yg\theta(y)^{-1}\in \mfc\bigl(\mfg_{\theta,x,r}+\varpi^{m}\mfg_{x,r}^{(e+1)N}\bigr).
\]
Since we have
\[
\mfg_{x,r}^{(e+1)N}
\subset\mfi_{s-1+}^{(e+1)N}
\subset\varphi^{(e+1)N}\cdot\mfi_{s-1}
=\varpi\mfi_{s}
\subset\varpi\mfg_{x,r}
\]
(note that we have $\mfi_{s-1}^{(e+1)N}\subset \mfi_{s-1}$ as $s-1\geq0$),
we can conclude
\[
yg\theta(y)^{-1}
\in \mfc\bigl(\mfg_{\theta,x,r}+\varpi^{m+1}\mfg_{x,r}\bigr)
= \mfc\bigl(\mfg_{\theta,x,r}\oplus\varpi^{m+1}\mfg_{x,r}^{d\theta=-1}\bigr).
\]
As we have $\mfc(\mfg_{\theta,x,r}\oplus\varpi^{m+1}\mfg_{x,r}^{d\theta=-1})\rtimes\theta\subset\tc({G}_{x,r},G_{\theta,x,r})$, we can conclude that $g\rtimes\theta\in\tc({G}_{x,r},G_{\theta,x,r})$.
This completes the proof.
\end{proof}

We next extend this proposition to the cosets of the Moy--Prasad filtrations of parahoric subgroups.
First recall that, for $x\in\B(\G_{\theta},F)$ and $r\in\R_{>0}$, we can canonically identify $G_{\theta,x,r:r+}$ with $(G_{x,r:r+})^{\theta}\subset G_{x,r:r+}$ (Proposition \ref{prop:twistMP}).
Note that $G_{x,r}$ acts on $G_{x,r:r+}$ via $\theta$-conjugation (in other words, $G_{x,r}$ acts on $G_{x,r:r+}\rtimes\theta$ via conjugation).
If two elements $[g_{1}], [g_{2}] \in G_{x,r:r+}$ are $\theta$-conjugate by $G_{x,r}$, then we write $[g_{1}]\sim_{\theta}[g_{2}]$.
When we regard an element $[h]\in G_{\theta,x,r:r+}$ (resp. $[g]\in G_{x,r:r+}$) as a subset of $G_{\theta,x,r}$ (resp.\ $G_{x,r}$), we denote it by $[h]_{G_{\theta}}$ (resp.\ $[g]_{G}$).

\begin{cor}\label{cor:coset}
Let $x\in\B(\G_{\theta},F)$ and $r\in\R_{>0}$.
Let $[h]\in G_{\theta,x,r:r+}$.
Then we have 
\[
\tc({G}_{x,r},[h]_{G_{\theta}})
=\bigsqcup_{\begin{subarray}{c}[g]\in{G}_{x,r:r+}\\ [g]\sim_{\theta}[h]\end{subarray}}[g]_{G}\rtimes\theta.
\]
\end{cor}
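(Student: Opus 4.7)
The plan is to establish the two inclusions of the claimed equality separately; disjointness of the right-hand side is automatic since distinct classes $[g]\in G_{x,r:r+}$ correspond to disjoint cosets in $G_{x,r}$.

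For ``$\subseteq$'', I would argue directly: for any $z\in G_{x,r}$ and any $h'\in[h]_{G_{\theta}}=hG_{\theta,x,r+}$, one has $[h']=[h]$ in $G_{x,r:r+}$ because $G_{\theta,x,r+}\subset G_{x,r+}$, and hence the image of $\tc(z,h')=zh'\theta(z)^{-1}\rtimes\theta$ modulo $G_{x,r+}$ is $[z]\cdot[h]\cdot\theta([z])^{-1}\rtimes\theta$, which lies in a class $[g]$ with $[g]\sim_{\theta}[h]$.

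For ``$\supseteq$'', take $[g]\sim_{\theta}[h]$ and $g'\in[g]_{G}$, and choose $y\in G_{x,r}$ witnessing the relation so that $g'=yh\theta(y)^{-1}u'$ for some $u'\in G_{x,r+}$. Since $\tc(G_{x,r},[h]_{G_{\theta}})$ is stable under the conjugation action of $G_{x,r}$ on $\wG$, conjugating $g'\rtimes\theta$ by $y^{-1}$ reduces the problem to showing $hu\rtimes\theta\in\tc(G_{x,r+},[h]_{G_{\theta}})$ for an arbitrary $u\in G_{x,r+}$. I would recast the desired factorisation $hu=zhv\theta(z)^{-1}$ as $v=h^{-1}z^{-1}hu\theta(z)$, which automatically lies in $G_{x,r+}$ because $G_{x,r+}$ is normalised by $h\in G_{x,r}$. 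A direct manipulation then shows that the $\theta$-invariance condition $\theta(v)=v$, which by Proposition \ref{prop:twistMP} amounts to $v\in G_{\theta,x,r+}$, is equivalent to the single equation
\[
\theta(z)z^{-1}=c,\qquad c:=h\theta(u)u^{-1}h^{-1},
\]
and one verifies immediately that $\theta(c)=c^{-1}$.

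The main point will therefore be the surjectivity of the map $z\mapsto\theta(z)z^{-1}$ from $G_{x,r+}$ onto $\{c\in G_{x,r+}:\theta(c)=c^{-1}\}$. I would dispose of this by a squaring trick: since $r>0$, the subgroup $G_{x,r+}$ lies in the pro-unipotent radical $G_{x,0+}$ and is therefore pro-$p$, so the squaring map $z\mapsto z^{2}$ on $G_{x,r+}$ is a bijection (as $p\neq2$). Let $z\in G_{x,r+}$ be the unique square root of $c^{-1}$; then $\theta(z)^{2}=\theta(c^{-1})=c=(z^{-1})^{2}$, and injectivity of squaring forces $\theta(z)=z^{-1}$. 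Consequently $\theta(z)z^{-1}=z^{-2}=c$, and substituting this $z$ back into $v=h^{-1}z^{-1}hu\theta(z)$ produces the desired $v\in G_{\theta,x,r+}$ and the factorisation $hu=zhv\theta(z)^{-1}$, completing the proof.
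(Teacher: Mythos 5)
Your proposal for the direction ``$\subseteq$'' and the disjointness of the right-hand side is fine, but the ``$\supseteq$'' direction has a genuine computational gap. Setting $w=\theta(z)z^{-1}$, a direct calculation gives
\[
\theta(v)=v \iff z\theta(z)^{-1}\,h\theta(u)\,=\,hu\,\theta(z)z^{-1}
\iff w\,(hu)\,w = h\theta(u),
\]
which is a \emph{nonabelian quadratic} equation in $w$, not the single linear condition $w=c$ with $c=h\theta(u)u^{-1}h^{-1}$ that you assert. (Even after abelianizing one would get $w^2=c$, not $w=c$.) Your squaring trick therefore solves the wrong equation, and it is not at all clear how to solve $w(hu)w=h\theta(u)$ subject to $\theta(w)=w^{-1}$ by purely group-theoretic manipulations; the obstruction is precisely the noncommutativity of $G_{x,r+}$.

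There are two further structural problems. First, by reducing to $hu\rtimes\theta\in\tc(G_{x,r+},[h]_{G_\theta})$ you are attempting to prove something strictly stronger than the corollary, which only asserts membership in $\tc(G_{x,r},[h]_{G_\theta})$, and there is no reason the conjugator $z$ should lie in $G_{x,r+}$ rather than $G_{x,r}$. Second, your argument never invokes Proposition \ref{prop:GV1}, which is the essential analytic input: the paper obtains the corollary by first decomposing $\tc(G_{x,r},G_{\theta,x,r})=\bigcup_{[h]}\tc(G_{x,r},[h]_{G_\theta})$, then showing this union is \emph{disjoint} via an elementary computation in the abelian quotient $G_{x,r:r+}$ (applying $\theta$ and using $p\neq 2$ to cancel a factor of $2$), and finally comparing this partition of $G_{x,r}\rtimes\theta=\tc(G_{x,r},G_{\theta,x,r})$ with the coset partition $\bigsqcup_{[g]}[g]_G\rtimes\theta$. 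This sidesteps any need to solve a factorisation equation directly. Your attempt to bypass Proposition \ref{prop:GV1} by an explicit construction would, if it worked, reprove that proposition for free, which should be a warning sign.
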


\begin{proof}
We have
\[
\tc({G}_{x,r},G_{\theta,x,r})
=\tc\biggl({G}_{x,r},\bigsqcup_{[h]\in G_{\theta,x,r:r+}} [h]_{G_{\theta}}\biggr)
=\bigcup_{[h]\in G_{\theta,x,r:r+}}\tc({G}_{x,r},[h]_{G_{\theta}}).
\]
Here note that the union in the right-hand side is in fact disjoint.
Indeed, if $[h_{1}],[h_{2}]\in G_{\theta,x,r:r+}$ satisfy $\tc({G}_{x,r},[h_{1}]_{G_{\theta}})\cap\tc({G}_{x,r},[h_{2}]_{G_{\theta}})\neq\emptyset$, then, for some element $y$ of ${G}_{x,r}$, we have $[h_{2}]=y[h_{1}]\theta(y)^{-1}$ in $G_{x,r:r+}$.
In other words, we have $[h_{2}]=[y]+[h_{1}]-[\theta(y)]$.
By this equality, we get
\[
2[h_{2}]
=[h_{2}]+\theta([h_{2}])
=[y]+[h_{1}]-\theta([y])+\theta\bigl([y]+[h_{1}]-\theta([y])\bigr)
=[h_{1}]+\theta([h_{1}])
=2[h_{1}].
\]
Since we assume that $p$ is not equal to $2$, we get $[h_{1}]=[h_{2}]$.

On the other hand, by Proposition \ref{prop:GV1}, we have
\[
\tc({G}_{x,r},G_{\theta,x,r})
={G}_{x,r}\rtimes\theta
=\bigsqcup_{[g]\in {G}_{x,r:r+}} [g]_{G}\rtimes\theta.
\]
Then, by noting that
\[
\tc({G}_{x,r},[h]_{G_{\theta}})
\subset\bigsqcup_{\begin{subarray}{c}[g]\in{G}_{x,r:r+}\\ [g]\sim_{\theta}[h]\end{subarray}} [g]_{G}\rtimes\theta,
\]
we get the assertion.
\end{proof}

Now we consider the semisimple descent of the characteristic functions of the cosets of the Moy--Prasad filtrations.
We first recall the following lemma:

\begin{lem}[{\cite[Lemma 4.2.4 (ii)]{MR3709003}}]\label{lem:GV2}
Let $C_{\theta}$ be an open compact subset of $G_{\theta,\tu}$, and $K$ a compact open subgroup of ${G}_{\tu}$.
We assume that $C_{\theta}$ is closed under conjugation by $K\cap G^{\theta}$.
Then $\vol(K\cap{G}^{\theta})^{-1}\mathbbm{1}_{C_{\theta}}\in C_{c}^{\infty}(G_{\theta})$ is a semisimple descent of $\vol(K)^{-1}\mathbbm{1}_{\tc(K,C_{\theta})}\in C_{c}^{\infty}(\w{G})$.
\end{lem}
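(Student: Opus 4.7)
The plan is to verify the defining identity $I_\gamma(f_\theta) = I_{\gamma\rtimes\theta}(f)$ pointwise for each strongly regular semisimple $\gamma \in G_{\theta,\tu}$. First I would perform the standard reductions: for such $\gamma$ one has $G_{\gamma\rtimes\theta} = (G_\theta)_\gamma$ (denote this torus by $T^\theta$), the twisted and untwisted Weyl discriminants $|D_{\w G}(\gamma\rtimes\theta)|$ and $|D_{G_\theta}(\gamma)|$ coincide, and the conventions above use the same Haar measure on $T^\theta$ for both orbital integrals. After cancelling these common factors, the claim reduces to
\[
\vol(K)^{-1}\int_{T^\theta\backslash G}\mathbbm{1}_{\tc(K,C_\theta)}\bigl(g^{-1}(\gamma\rtimes\theta)g\bigr)\,d\dot g = \vol(K\cap G^\theta)^{-1}\int_{T^\theta\backslash G_\theta}\mathbbm{1}_{C_\theta}(h^{-1}\gamma h)\,d\dot h.
\]

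Set $\phi_\gamma(g):=g^{-1}\gamma\theta(g)$ for $g\in G$ and $\psi_\gamma:=\phi_\gamma|_{G_\theta}$. The condition $g^{-1}(\gamma\rtimes\theta)g\in\tc(K,C_\theta)$ is equivalent to the existence of $k\in K$ with $\phi_\gamma(gk)\in C_\theta$, so the left-hand side equals $\vol(K)^{-1}\vol\bigl(T^\theta\backslash(\phi_\gamma^{-1}(C_\theta)\cdot K)\bigr)$. The key structural step is to prove $\phi_\gamma^{-1}(C_\theta)\subset G_\theta$, so that $\phi_\gamma^{-1}(C_\theta) = \psi_\gamma^{-1}(C_\theta)$. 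For this I would invoke the identification $\Gamma(G_{\theta,\tu})\cong\Gamma(\mathcal U)$: for any $c \in C_\theta$ in the image of $\phi_\gamma$, the (strongly regular, topologically unipotent) element $c$ is $G$-twisted-conjugate to $\gamma$, so by injectivity of this bijection $c = \psi_\gamma(h)$ for some $h\in G_\theta$. Since the fibers of $\phi_\gamma$ are left $T^\theta$-cosets (from $\phi_\gamma(g_1)=\phi_\gamma(g_2)\Leftrightarrow g_2 g_1^{-1}\in G_{\gamma\rtimes\theta}=T^\theta$) and $T^\theta\subset G_\theta$, the whole fiber $T^\theta\cdot h$ lies in $G_\theta$.

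Next I would compute the fibers of the multiplication $m:\psi_\gamma^{-1}(C_\theta)\times K\to\psi_\gamma^{-1}(C_\theta)\cdot K$. Fix $g = h_0 k_0\in\psi_\gamma^{-1}(C_\theta)\cdot K$. Any other representation $g = h k$ with $h\in\psi_\gamma^{-1}(C_\theta)$, $k\in K$ is of the form $h = h_0 k_1$, $k = k_1^{-1}k_0$ for some $k_1\in K$; the condition $h_0 k_1 \in G_\theta$ forces $k_1\in K^\theta:=K\cap G^\theta$ (since $\theta(h_0)=h_0$), and conversely for any $k_1\in K^\theta$ one has $\psi_\gamma(h_0 k_1) = k_1^{-1}\psi_\gamma(h_0)k_1\in C_\theta$ by the assumed $K^\theta$-stability of $C_\theta$. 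Hence $m^{-1}(g)$ is a translate of $K^\theta$, of volume $\vol(K\cap G^\theta)$, yielding
\[
\vol\bigl(\psi_\gamma^{-1}(C_\theta)\cdot K\bigr) = \vol\bigl(\psi_\gamma^{-1}(C_\theta)\bigr)\,\vol(K)\,\vol(K\cap G^\theta)^{-1}.
\]

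Finally, since $T^\theta\subset T_\gamma$ commutes with $\gamma$, both $\psi_\gamma^{-1}(C_\theta)$ and $\psi_\gamma^{-1}(C_\theta)\cdot K$ are left-$T^\theta$-invariant. Dividing both displayed volumes by $\vol(T^\theta)$ and substituting into the two sides of the reduced identity, each is equal to $\vol(\psi_\gamma^{-1}(C_\theta))/\bigl(\vol(K\cap G^\theta)\vol(T^\theta)\bigr)$, completing the proof. The main obstacle is the containment $\phi_\gamma^{-1}(C_\theta)\subset G_\theta$ in the second paragraph: it rests on the bijection $\Gamma(G_{\theta,\tu})\cong\Gamma(\mathcal U)$, i.e., on the non-trivial fact that $\theta$-twisted $G$-conjugacy collapses to $G_\theta$-conjugacy on the strongly regular topologically unipotent locus, without which the left-hand integral would acquire spurious contributions from elements $g \notin G_\theta$ that have no counterpart on the right.
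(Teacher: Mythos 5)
Your route is a from-scratch volume computation, whereas the paper's proof of this lemma is an appeal to \cite[Lemma 4.2.4 (ii)]{MR3709003} for cases (1)--(3) plus a remark that case (4) follows ``by the same argument'', the two ingredients being the matching of Weyl discriminants for topologically unipotent elements (\cite[Lemma 4.1.3]{MR3709003}) and Kottwitz's descent lemma \cite[Lemma 2.3]{MR2192014}. So you are not reconstructing the paper's proof; you are in effect re-deriving Kottwitz's descent in this special case by hand. That is a legitimate plan, but as written it has gaps beyond being informal.

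First, you dismiss the identity $|D_{\w G}(\gamma\rtimes\theta)| = |D_{G_\theta}(\gamma)|$ as a ``standard reduction''. It is precisely one of the two nontrivial inputs the paper flags as needing verification (it is \cite[Lemma 4.1.3]{MR3709003}); it is not automatic and it requires an actual computation with the twisted Weyl discriminant. Simply cancelling it is not an allowed step in a self-contained proof.

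Second, and more seriously, you identify $G_{\gamma\rtimes\theta}$ with $(G_\theta)_\gamma$ and deduce $T^\theta\subset G_\theta$, from which $\phi_\gamma^{-1}(C_\theta)\subset G_\theta$. But $G_{\gamma\rtimes\theta}=(G^\theta)_\gamma$, not $(G_\theta)_\gamma$, and these differ in case (1): for $\G=\GL_{2n+1}$ one has $G^\theta=\mathrm{O}_{2n+1}\supsetneq \mathrm{SO}_{2n+1}=G_\theta$, and the central element $-1\in (G^\theta)_\gamma\setminus G_\theta$. So the fibre of $\phi_\gamma$ over a point of $C_\theta$ is a coset of $(G^\theta)_\gamma$, which is \emph{not} contained in $G_\theta$, and your containment $\phi_\gamma^{-1}(C_\theta)\subset G_\theta$ fails. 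Your fibre count likewise really produces $K\cap G_\theta$ (the condition $h_0k_1\in G_\theta$ with $h_0\in G_\theta$ gives $k_1\in G_\theta$, not merely $k_1\in G^\theta$); writing $K\cap G^\theta$ papers over the distinction. The rescue in this case is that $K\subset G_{\tu}$ and, for $p$ odd, the non-identity component $-G_\theta$ of $G^\theta$ contains no topologically unipotent elements, so in fact $K\cap G^\theta = K\cap G_\theta$ — but you must notice and argue this, and you must also account for the extra $T^\theta$-translates of $\psi_\gamma^{-1}(C_\theta)$ inside $\phi_\gamma^{-1}(C_\theta)$ when passing from the $T^\theta\backslash G$-integral to the $(G_\theta)_\gamma\backslash G_\theta$-integral. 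As written, case (1) is simply not covered.

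Finally, the fibre-volume step is heuristic: the fibre $m^{-1}(g)$ is a translate of $K\cap G_\theta$ sitting as a measure-zero graph inside $\psi_\gamma^{-1}(C_\theta)\times K$, so it does not literally have volume $\vol(K\cap G^\theta)$ for the product measure. What one really wants is a Fubini/Weil integration argument of the shape
\[
\vol_{G_\theta}(\psi_\gamma^{-1}(C_\theta))\cdot\vol_G(K)
=\int_{\psi_\gamma^{-1}(C_\theta)K}\vol_{G_\theta}\bigl(\psi_\gamma^{-1}(C_\theta)\cap gK\bigr)\,dg
\]
together with the observation that each inner volume equals $\vol_{G_\theta}(K\cap G_\theta)$, and then a descent to $T^\theta\backslash G$. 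Likewise ``dividing by $\vol(T^\theta)$'' is only formal, since $T^\theta$ need not be compact; the honest statement compares quotient measures. These measure-theoretic points are exactly what Kottwitz's descent lemma encodes, which is why the paper invokes it rather than redoing the computation.
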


Here we note that, in \cite{MR3709003}, this lemma is proved for pairs $(\G,\bfH)$ only in the cases of (1), (2), and (3).
However we can show the same assertion for the case of (4) by the same argument. Namely, we check the matching of Weyl discriminants for topologically unipotent elements (\cite[Lemma 4.1.3]{MR3709003} for the case of (4)) and use Kottwitz's descent lemma (\cite[Lemma 2.3]{MR2192014}).
By combining Lemma \ref{lem:GV2} with Corollary \ref{cor:coset}, we get the following consequence:

\begin{cor}\label{cor:MPdesc}
Let $x\in\B(\G_{\theta},F)$ and $r\in\R_{>0}$.
Let $[h]\in G_{\theta,x,r:r+}$.
Then $\vol(G_{\theta,x,r})^{-1}\mathbbm{1}_{[h]_{G_{\theta}}}\in C_{c}^{\infty}(G_{\theta})$ is a semisimple descent of 
\[
\vol({G}_{x,r})^{-1}
\sum_{\begin{subarray}{c}[g]\in{G}_{x,r:r+}\\ [g]\sim_{\theta}[h]\end{subarray}}\mathbbm{1}_{[g]_{G}\rtimes\theta}\in C_{c}^{\infty}(\w{G}).
\]
\end{cor}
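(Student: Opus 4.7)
The plan is to deduce this statement as a direct consequence of Lemma \ref{lem:GV2} and Corollary \ref{cor:coset}, by specializing the former to the open compact set $C_{\theta} = [h]_{G_{\theta}}$ and the compact open subgroup $K = G_{x,r}$, and then using the latter to evaluate $\tc(G_{x,r},[h]_{G_{\theta}})$ explicitly.

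First I would verify the hypotheses of Lemma \ref{lem:GV2} for this choice. Since $r>0$, both $G_{x,r}\subset G_{\tu}$ and $G_{\theta,x,r}\subset G_{\theta,\tu}$, so $K$ is a compact open subgroup of $G_{\tu}$ and $C_{\theta}=[h]_{G_{\theta}}$, being a coset of $G_{\theta,x,r+}$ inside the compact group $G_{\theta,x,r}$, is an open compact subset of $G_{\theta,\tu}$. The only nontrivial condition is that $C_{\theta}$ is closed under conjugation by $K\cap G^{\theta}=G_{\theta,x,r}$. This I would check using the standard commutator estimate for Moy--Prasad filtrations: for $a\in G_{\theta,x,r}$ and $b=hc\in[h]_{G_{\theta}}$ with $c\in G_{\theta,x,r+}$, one has $aba^{-1}=(aha^{-1}h^{-1})\cdot h\cdot (aca^{-1})$, where $aha^{-1}h^{-1}\in[G_{\theta,x,r},G_{\theta,x,r}]\subset G_{\theta,x,2r}\subset G_{\theta,x,r+}$ and $aca^{-1}\in G_{\theta,x,r+}$ by normality, so that $aba^{-1}\in h\cdot G_{\theta,x,r+}=[h]_{G_{\theta}}$.

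Having confirmed the hypotheses, Lemma \ref{lem:GV2} immediately yields that $\vol(G_{\theta,x,r})^{-1}\mathbbm{1}_{[h]_{G_{\theta}}}$ is a semisimple descent of $\vol(G_{x,r})^{-1}\mathbbm{1}_{\tc(G_{x,r},[h]_{G_{\theta}})}$. To finish, I would invoke Corollary \ref{cor:coset} to rewrite
\[
\tc(G_{x,r},[h]_{G_{\theta}})
=
\bigsqcup_{\substack{[g]\in G_{x,r:r+}\\ [g]\sim_{\theta}[h]}}[g]_{G}\rtimes\theta,
\]
whereupon the characteristic function of the left-hand side decomposes as the finite sum $\sum_{[g]\sim_{\theta}[h]}\mathbbm{1}_{[g]_{G}\rtimes\theta}$, matching the expression in the corollary.

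There is no real obstacle here: the content of the corollary lies entirely in Proposition \ref{prop:GV1} (which upgrades surjectivity of $\tc$ on the level of Moy--Prasad filtrations, beyond what Ganapathy--Varma proved) and Corollary \ref{cor:coset} (which uses the assumption $p\neq 2$ to pin down the $\theta$-conjugacy class of $[h]$ in $G_{x,r:r+}$). Once those are in hand, the present statement is a formal packaging of the Ganapathy--Varma descent lemma, and the only small verification I would need to be careful about is the commutator estimate ensuring $G$-conjugation invariance of the coset $[h]_{G_{\theta}}$.
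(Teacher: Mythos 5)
Your proposal is correct and follows exactly the route the paper takes: specialize Lemma \ref{lem:GV2} to $C_{\theta}=[h]_{G_{\theta}}$ and $K=G_{x,r}$, then use Corollary \ref{cor:coset} to decompose $\mathbbm{1}_{\tc(G_{x,r},[h]_{G_{\theta}})}$ into the indicated sum of coset characteristic functions. The only cosmetic difference is that you spell out the commutator estimate $[G_{\theta,x,r},G_{\theta,x,r}]\subset G_{\theta,x,r+}$ justifying the conjugation-invariance of the coset, while the paper leaves that implicit and instead devotes a remark to checking $G_{x,r}\cap G^{\theta}=G_{\theta,x,r}$ in case (1), where $\G^{\theta}=\mathrm{O}_{2n+1}\neq\SO_{2n+1}=\G_{\theta}$ (which you assert without comment, though it does follow from Proposition \ref{prop:twistMP} since $(G_{x,r})^{\theta}=G_{x,r}\cap G^{\theta}$).
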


\begin{proof}
We take $C_{\theta}$ and $K$ in Lemma \ref{lem:GV2} to be $[h]_{G_{\theta}}$ and ${G}_{x,r}$, respectively.
Then we have ${G}_{x,r}\cap{G}^{\theta}={G}_{x,r}\cap ({G}_{\theta})=G_{\theta,x,r}$ (note that we have $\G^{\theta}\neq \G_{\theta}$ only when $\G=\GL_{2n+1}$, and that, in this case, we have $\G^{\theta}=\mathrm{O}_{2n+1}=\pm\SO_{2n+1}=\pm\G_{\theta}$).
In particular, $[h]_{G_{\theta}}$ is stable under the conjugation by $K\cap G^{\theta}$, hence the assumption of Lemma \ref{lem:GV2} is satisfied.
Therefore $\vol(G_{\theta,x,r})^{-1}\mathbbm{1}_{[h]_{G_{\theta}}}\in C_{c}^{\infty}(G_{\theta})$ is a semisimple descent of $\vol({G}_{x,r})^{-1}\mathbbm{1}_{\tc({G}_{x,r},[h]_{G_{\theta}})}\in C_{c}^{\infty}(\w{G})$.
By combining this with Corollary \ref{cor:coset}, we get the assertion.
\end{proof}

\section{Evaluation of the maximum of depth in an $L$-packet}\label{sec:max}

\subsection{Semisimple descent and the transfer}
In this subsection, we recall the compatibility of the semisimple descent with the endoscopic transfer.

We first note that, for each pair $(\G,\bfH)$ in Section \ref{sec:pre}.1, the relationship between $\G_{\theta}$ and $\bfH$ is described as follows:
\begin{itemize}
\item[(1)]
In this case, $\G_{\theta}=\SO_{2n+1}$ and $\bfH=\Sp_{2n}$.
Thus $((\G_{\theta})_{\mathrm{sc}},\bfH_{\mathrm{sc}})$ can be extended to a nonstandard endoscopic triplet.

\item[(2)]
In this case, $\G_{\theta}=\Sp_{2n}$ and $\bfH=\SO_{2n+1}$.
Thus $((\G_{\theta})_{\mathrm{sc}},\bfH_{\mathrm{sc}})$ can be extended to a nonstandard endoscopic triplet.

\item[(3)]
In this case, $\G_{\theta}=\Sp_{2n}$ and $\bfH=\SO_{2n}$.
Thus $\bfH$ can be regarded as a standard endoscopic group of $\G_{\theta}$.

\item[(4)]
In this case, $\G_{\theta}=\bfH=\U_{E/F}(N)$.
\end{itemize}

In particular, in each case, we can define the notion of \textit{matching orbital integral} for test functions of $C_{c}^{\infty}(\mfg_{\theta})$ and $C_{c}^{\infty}(\mfh)$.
See \cite[Section 1.8]{MR2418405} for the cases of (1) and (2), and \cite[Section 5.5]{MR1687096} for the case of (3).
In the case of (4), we say that $f^{\mfg_{\theta}}\in C_{c}^{\infty}(\mfg_{\theta})$ and $f^{\mfh}\in C_{c}^{\infty}(\mfh)$ have matching orbital integrals if $SI(f^{\mfg_{\theta}})=SI(f^{\mfh})$ (namely, coincide as elements of $\mathcal{SI}(\mfg_{\theta})=\mathcal{SI}(\mfh)$).
Here we use the same notations as in the group case such as $SI$ and $\mathcal{SI}$.
Furthermore, in every case, similarly to Theorem \ref{thm:LSK}, we have a \textit{transfer} map from $\mathcal{I}(\mfg_{\theta})$ to $\mathcal{SI}(\mfh)$ characterized by this matching orbital integral condition.
See also \cite[Sections 6.6 and 10.5]{MR3709003}.

Then, the relationship between the three notions of matching orbital integrals, that is, the transfer from $\mathcal{I}(\w{G})$ to $\mathcal{SI}(H)$, the semisimple descent from $\mathcal{I}(\mathcal{U})$ to $\mathcal{I}(G_{\theta,\tu})$, and the transfer from $\mathcal{I}(\mfg_{\theta,\tn})$ to $\mathcal{SI}(\mfh_{\tn})$, can be stated as follows:

\begin{prop}\label{prop:ss-tran}
Let $f\in C_{c}^{\infty}(\mathcal{U})$ and $f_{\theta}\in C_{c}^{\infty}(G_{\theta,\tu})$ such that $f_{\theta}$ is a semisimple descent of $f$.
Then, for $f^{H}\in C_{c}^{\infty}(H_{\tu})$, $f^{H}$ is a transfer of $f$ if and only if $f_{\theta}\circ\mfc \in C_{c}^{\infty}(\mfg_{\theta,\tn})$ and $f^{H}\circ\mfc'\in C_{c}^{\infty}(\mfh_{\tn})$ have matching orbital integrals.
\end{prop}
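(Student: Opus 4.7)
The plan is to connect the three matching conditions via a chain of two bijections: the semisimple descent $\Gamma(G_{\theta,\tu}) \cong \Gamma(\mathcal{U})$ converts twisted orbital integrals on $\w{G}$ into ordinary ones on $G_\theta$, and the Cayley transforms $\mfc$ and $\mfc'$ convert group-level orbital integrals on $G_{\theta,\tu}$ and $H_{\tu}$ into Lie-algebra orbital integrals on $\mfg_{\theta,\tn}$ and $\mfh_{\tn}$. Under these bijections, the transfer identity defining ``$f^H$ is a transfer of $f$'' should correspond term-by-term to the identity defining ``$f_\theta \circ \mfc$ and $f^H \circ \mfc'$ have matching orbital integrals''.

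Concretely, I would start from the transfer condition
\[
SI_\gamma(f^H) = \sum_{\tilde\delta \leftrightarrow \gamma / \sim} \Delta^{\mathrm{IV}}(\gamma, \tilde\delta)\, I_{\tilde\delta}(f)
\]
for each strongly $\G$-regular semisimple $\gamma \in H_{\tu}$. The semisimple descent hypothesis replaces each $I_{\tilde\delta}(f)$ by $I_{\delta}(f_\theta)$, where $\delta \in G_{\theta,\tu}$ is the descent representative of $\tilde\delta$, and the norm correspondence on $\w{G}$ becomes the natural stable-conjugacy correspondence between $G_{\theta,\tu}$ and $H_{\tu}$ attached to the pair $(\G_\theta, \bfH)$: the identity in case (4), an ordinary endoscopic correspondence in case (3), and a nonstandard endoscopic correspondence in cases (1) and (2). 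Applying the Cayley transforms via Proposition \ref{prop:Cayleyhomeo} transports each orbital integral from the group to the Lie algebra, and yields
\[
SI_X(f^H \circ \mfc') = \sum_{X' \leftrightarrow X /\sim} \Delta(X, X')\, I_{X'}(f_\theta \circ \mfc)
\]
for each strongly regular semisimple $X \in \mfh_{\tn}$, provided the transfer factors match up correctly.

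The main obstacle is precisely this compatibility of transfer factors. The twisted transfer factor $\Delta^{\mathrm{IV}}$ for $(\G, \bfH)$ must, after passing through the semisimple descent and the two Cayley transforms, reproduce the (nonstandard, standard, or trivial) transfer factor for $(\mfg_\theta, \mfh)$. The renormalization in the definition of $\mfc'$---the factor $\frac{1}{2}$ inside $\mfc$ and the subsequent squaring---is engineered precisely for this compatibility, as the correspondence between conjugacy classes in $G_\theta$ and in $H$ is quadratic at the level of eigenvalues, and the chosen normalization is exactly what is needed to absorb this doubling. Verifying the compatibility requires a case-by-case matching of Weyl discriminants in the spirit of \cite[Lemma 4.1.3]{MR3709003}, combined with Kottwitz's descent lemma \cite[Lemma 2.3]{MR2192014}. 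Once it is in place, every step in the chain of equalities is reversible, so both directions of the desired equivalence follow simultaneously.
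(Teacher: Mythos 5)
Your high-level plan---convert the twisted orbital integral to an ordinary one on $G_\theta$ via the descent hypothesis, then transport both sides to the Lie algebras via the Cayley transforms---is indeed the structure of the argument. However, there are two concrete issues with the proposal as written.

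First, a scoping issue: the paper does not reprove the proposition for all four cases. It cites \cite[Lemma 6.6.4]{MR3709003} for cases (1) and (2) and \cite[Lemma 7.7.2]{MR3709003} for case (3), and gives a proof only for case (4), where $\G_\theta=\bfH=\U_{E/F}(N)$. In case (4) the pivotal fact---which your sketch gestures at with ``the identity in case (4)'' but never actually deploys---is that the Kottwitz--Shelstad transfer factor $\Delta^{\mathrm{IV}}$ is \emph{trivial}, which the paper takes from \cite[1.10 Proposition]{MR2672539}. Once that is noted, there is no $\Delta(X,X')$ to reconcile at all: the Lie-algebra matching condition for $(\mfg_\theta,\mfh)$ in case (4) is defined simply as $SI(f^{\mfg_\theta})=SI(f^{\mfh})$ with no transfer factor, and inserting an undefined $\Delta(X,X')$ into the displayed identity obscures this.

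Second, the proposal never supplies the two concrete facts that actually drive the proof, and it misattributes a third. What is needed is (i) the eigenvalue description of the norm correspondence for $\w{G}$, which shows that if $\t\delta\in\mathcal{U}$ then every norm of $\t\delta$ lies in $H_{\tu}$ (so the transfer identity is vacuous outside $H_{\tu}$ and the supports are compatible), and (ii) the verification that $\mfc'(Y)$ is a norm of $\mfc(Y)\rtimes\theta$, together with the fact that the inclusion $G_{\theta,\tu}\hookrightarrow\mathcal{U}$ induces a bijection on stable conjugacy classes (\cite[Corollary 4.0.4]{MR3709003}), which identifies the index set $\{\t\delta\leftrightarrow\gamma/\sim\}$ with $\{\delta\sim_{\mathrm{st}}\mfc(Y)/\sim\}$ in $G_{\theta,\tu}$. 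Your appeal to ``Weyl discriminant matching plus Kottwitz's descent lemma'' is misplaced here: in this paper those tools are used to establish the descent statement Lemma \ref{lem:GV2} itself (the step that produces $f_\theta$ from $f$), not to compare transfer factors. With (i), (ii), the triviality of $\Delta^{\mathrm{IV}}$, and the descent hypothesis $I_\delta(f_\theta)=I_{\delta\rtimes\theta}(f)$, the transfer identity collapses directly to $SI_Y(f^H\circ\mfc')=SI_Y(f_\theta\circ\mfc)$, and the equivalence is immediate. As written, your proposal correctly locates where the argument should go but leaves its central verification unsubstantiated and attaches the wrong lemmas to it.
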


This proposition can be interpreted as the commutativity of the following diagram:
\[
\xymatrix{
\mathcal{I}(G_{\theta,\tu})\ar@{=}[r]&\mathcal{I}(\mathcal{U})\ar^-{\text{transfer}}[rr]&&\mathcal{SI}(H_{\tu})\\
\mathcal{I}(\mfg_{\theta,\tu})\ar@{->}^-{\cong}_-{(\mfc^{-1})^{\ast}}[u]\ar^-{\text{transfer}}[rrr]&&&\mathcal{SI}(\mfh_{\tu})\ar@{->}^-{\cong}_-{(\mfc'^{-1})^{\ast}}[u]
}
\]
The cases of (1) and (2) of this proposition are proved in \cite[Lemma 6.6.4]{MR3709003}, and the case of (3) is proved in \cite[Lemma 7.7.2]{MR3709003}.
Finally, we can show the assertion for the case of (4) in the same manner as in these three cases.
However, for the sake of completeness, we explain the proof.
In the rest of this subsection, we focus on the case where $\G=\mathrm{Res}_{E/F}\GL_{N}$ and $\G_{\theta}=\bfH=\U_{E/F}(N)$.

First, we recall a description of the norm correspondence between $\G$ and $\bfH$ in terms of the eigenvalues of elements.
Let $\t\delta$ and $\gamma$ be semisimple elements of $\w{G}$ and $H$, respectively. 
Then, by the semisimplicity, these elements can be diagonalized.
Namely, we can take $x\in \G(\overline{F})$ and $y\in\bfH(\overline{F})$ satisfying 
\[
x \t\delta x^{-1}
= 
\bigl(\diag(t_{1},\ldots,t_{N}),\diag(s_{1},\ldots,s_{N})\bigr)\rtimes\theta
\]
and
\[
y\gamma y^{-1}
=
\diag(v_{1},\ldots,v_{N}).
\]
Here note that we have $\G_{\overline{F}}=(\mathrm{Res}_{E/F}\GL_{N,E})_{\overline{F}}\cong\GL_{N,\overline{F}}\times\GL_{N,\overline{F}}$.
Then, $\gamma$ is a norm of $\t\delta$ if and only if we have
\[
\{v_{1},\ldots,v_{N}\}
=
\biggl\{\frac{t_{1}}{s_{N}},\ldots,\frac{t_{N}}{s_{1}}\biggr\}.
\]

On the other hand, also the topological unipotency is characterized in terms of the eigenvalues.
Namely, for a semisimple element $g$ of a classical group over $F$, it is topologically unipotent if and only if we have $\mathrm{val}(\alpha(g)-1)>0$ for every eigenvalue $\alpha(g)$ of $g$.
Hence if $g\in G_{\theta,\tu}$ is a semisimple element, then, for some element $z\in \G_{\theta}(\overline{F})$, we have $zgz^{-1}=\diag(t_{1},\ldots,t_{N})$ and $\mathrm{val}(t_{i}-1)>0$ for every $1\leq i\leq N$.
Thus, if $\t\delta\in\w{G}$ is a semisimple element belonging to $\mathcal{U}$, by the above interpretation of the norm correspondence via the eigenvalues, every norm of $\t\delta$ belongs to $H_{\tu}$. 

Finally we recall that, for the pair $(\G=\mathrm{Res}_{E/F}\GL_{N}, \bfH=\U_{E/F}(N)$, the Kottwitz--Shelstad transfer factor $\Delta^{\mathrm{IV}}$ is trivial.
See, for example, \cite[1.10 Proposition]{MR2672539}.
Note that we implicitly consider the standard base change $L$-embedding from ${}^{L}\bfH$ to ${}^{L}\G$ (in the sense of Rogawski, see \cite[Section 4.7]{MR1081540} or \cite[Section 2.1]{MR3338302}), and we have $I^{-}=\emptyset$ (hence $d^{-}=0$) in the notation of \cite{MR2672539}.
In particular, every term in the formula of \cite[1.10 Proposition]{MR2672539} is trivial.

\begin{proof}[Proof of Proposition $\ref{prop:ss-tran}$ in the case of $(4)$]
We take $f\in C_{c}^{\infty}(\mathcal{U})$ and $f_{\theta}\in C_{c}^{\infty}(G_{\theta,\tu})$ such that $f_{\theta}$ is a semisimple descent of $f$.
Let $f^{H}\in C_{c}^{\infty}(H_{\tu})$.
Then, by the definition of the transfer, $f^{H}$ is a transfer of $f$ if and only if, for every strongly regular semisimple element $\gamma\in H$, we have
\[
SI_{\gamma}(f^{H})
=
\sum_{\t\delta\leftrightarrow\gamma/\sim} \Delta^{\mathrm{IV}}(\gamma,\t\delta)I_{\t\delta}(f).
\tag{$\ast$}
\]
However, by the above observation on the norm correspondence for topologically unipotent elements, if $\gamma\notin H_{\tu}$, then every $\t\delta\in\w{G}$ corresponding to $\gamma$ does not belong to $\mathcal{U}$.
Thus, since $f^{H}$ and $f$ are supported in the topologically unipotent elements, the condition $(\ast)$ is trivial for $\gamma$ such that $\gamma\notin H_{\tu}$.

Now we consider the condition $(\ast)$ for $\gamma\in H_{\tu}$.
We put $\gamma=\mfc'(Y)$ (recall that every element of $H_{\tu}$ can be written in this form since we have $\mfc(\mfh_{\tn})=H_{\tu}$ and the map $h\mapsto h^{2}$ gives a bijection from $H_{\tu}$ to itself, see \cite[Lemma 3.2.7]{MR3709003}).
For such an element $\gamma$, let us consider the index set of the sum in the right-hand side of $(\ast)$.
First, by the above description of the norm correspondence in terms of the eigenvalues, the element $\mfc(Y)\rtimes\theta$ is contained in this index set (namely, $\mfc'(Y)$ is a norm of $\mfc(Y)\rtimes\theta$).
Moreover, every other element appearing in the index set is $\G(\overline{F})$-conjugate to this element $\mfc(Y)\rtimes\theta$.
Namely, the index set is given by the $G$-conjugacy classes of stable conjugacy classes of $\mfc(Y)\rtimes\theta$ in $\mathcal{U}$.
Since the inclusion
\[
G_{\theta,\tu}\hookrightarrow\mathcal{U};\, x\mapsto x\rtimes\theta
\]
induces a bijection between the conjugacy classes and a bijection between the stable conjugacy classes (\cite[Corollary 4.0.4]{MR3709003}), we get the equality
\[
\{\t\delta\in\mathcal{U}\mid \text{$\gamma$ is a norm of $\t\delta$}\}/(\text{conjugacy})
\]
\[
=
\{\delta\in G_{\theta,\tu} \mid \text{$\delta$ is stably conjugate to $\mfc(Y)$}\}/(\text{conjugacy}).
\]

Thus, by combining this observation with the triviality of $\Delta^{\mathrm{IV}}$, the condition $(\ast)$ is equivalent to
\[
SI_{\gamma}(f^{H})
=
\sum_{\delta\sim_{\mathrm{st}}\mfc(Y)/\sim} I_{\delta\rtimes\theta}(f).
\]
However, since $\phi$ is a semisimple descent of $f$, we have $I_{\delta}(f_{\theta})=I_{\delta\rtimes\theta}(f)$.
Thus the above equality is furthermore equivalent to
\[
SI_{\gamma}(f^{H})
=
SI_{\mfc(Y)}(f_{\theta}).
\]
As we have $SI_{\gamma}(f^{H})=SI_{Y}(f^{H}\circ\mfc')$ and $SI_{\mfc(Y)}(f_{\theta})=SI_{Y}(f_{\theta}\circ\mfc)$, we can rephrase this condition as that $f_{\theta}\circ\mfc$ and $f^{H}\circ\mfc'$ have matching orbital integrals.
\end{proof}

If we assume that the residual characteristic $p$ is large enough, then we can extend this proposition to functions supported on $\mathcal{U}_{r}$, $G_{\theta,r}$, and $H_{r}$.
More precisely, for a pair $(\G,\bfH)$ in Section \ref{sec:pre}.1, if we put the condition that
\[
p>
\begin{cases}
2n & \text{the cases of (1), (2), and (3),}\\
N & \text{the case of (4)},
\end{cases}
\]
then every maximal torus in $\G$, $\G_{\theta}$, and $\bfH$ splits over a tamely ramified extension.
Then, for every $r>0$, the regions $\mathcal{U}_{r}$, $G_{\theta,r}$ and $H_{r}$ can be characterized in terms of the eigenvalues.
As a consequence, we can get the following diagram (see \cite[Remarks 10.1.5 and 10.5.1]{MR3709003} for details):
\[
\xymatrix{
\mathcal{I}(G_{\theta,r})\ar@{=}[r]&\mathcal{I}(\mathcal{U}_{r})\ar^-{\text{transfer}}[rr]&&\mathcal{SI}(H_{r})\\
\mathcal{I}(\mfg_{\theta,r})\ar@{->}^-{\cong}_-{(\mfc^{-1})^{\ast}}[u]\ar^-{\text{transfer}}[rrr]&&&\mathcal{SI}(\mfh_{r})\ar@{->}^-{\cong}_-{(\mfc'^{-1})^{\ast}}[u]
}
\]


\subsection{Character expansion and the endoscopic character relation: comparison of radii}

We next recall the \textit{homogeneity} of the characters of representations.
Let $\J$ be a connected reductive group over $F$.
For an irreducible smooth representation $\pi$ of $J$, we denote its character by $\Theta_{\pi}$.
For a nilpotent orbit $\mathcal{O}$ of $\mfj$, we write $\widehat{\mu_{\mathcal{O}}}$ for the following $J$-invariant distribution on $C_{c}^{\infty}(\mfj)$:
\[
f
\mapsto
\mu_{\mathcal{O}}(\hat{f}),
\]
where $\mu_{\mathcal{O}}$ is the orbital integral with respect to the nilpotent orbit $\mathcal{O}$ and $\hat{f}$ is the Fourier transform of $f$.
Here we do not recall the normalizations (i.e., the choices of measures) of these orbital integrals and the Fourier transform.
See, for example, Sections 3.1 and 3.4 in \cite{MR1914003} for the details.

\begin{defn}[character expansion]
Let $r\in\R_{>0}$ and $\mfc_{\J}$ be a $J$-equivariant homeomorphism from $\mathfrak{j}_{r}$ to $J_{r}$.
Let $\pi$ be an irreducible smooth representation of $J$.
We say that $\Theta_{\pi}$ has a \textit{character expansion} on $J_{r}$ with respect to $\mfc_{\J}$ if there exists a complex number $c_{\mathcal{O}}$ for each nilpotent orbit $\mathcal{O}$ of $\mfj$ such that, for every $f\in C_{c}^{\infty}(\mathfrak{j}_{r})$, the following equality holds:
\[
\Theta_{\pi}(f\circ\mfc_{\J}^{-1})
=
\sum_{\mathcal{O}\in\Nil(\mfj)}c_{\mathcal{O}}\cdot\widehat{\mu_{\mathcal{O}}}(f).
\]
In other words, as $J$-invariant distributions on $C_{c}^{\infty}(\mfj_{r})$, we have
\[
\Theta_{\pi}\circ(\mfc_{\J}^{-1})^{\ast}
=
\sum_{\mathcal{O}\in\Nil(\mfj)}c_{\mathcal{O}}\cdot\widehat{\mu_{\mathcal{O}}}.
\]
\end{defn}

The following is the \textit{homogeneity} of the characters of representations, which was established by DeBacker:

\begin{thm}[{\cite[Theorem 3.5.2]{MR1914003}}]\label{thm:hom}
Let $\bfH$ be a quasi-split classical group over $F$ as in Section $\ref{sec:pre}.1$ and $\mfc'$ the Cayley transform defined in Section $\ref{sec:pre}.4$.
We assume that the residual characteristic is large enough to satisfy hypotheses in \cite{MR1914003}.
Let $\Theta_{\pi}$ be the character of an irreducible smooth representation $\pi$ of $H$ of depth $r$.
Then $\pi$ has a character expansion on $H_{r+}$ with respect to $\mfc'$.
\end{thm}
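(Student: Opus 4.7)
The plan is to follow DeBacker's approach in \cite{MR1914003}, combining Harish-Chandra's qualitative local character expansion with a homogeneity theorem for $H$-invariant distributions on $\mfh$. First I would extract a filtration invariance from the depth hypothesis: since $\depth(\pi) = r$, Proposition \ref{prop:depth} provides a point $x \in \mathcal{B}(\bfH, F)$ with $\pi^{H_{x,r+}} \neq 0$. A standard matrix-coefficient calculation translates this into an invariance of $\Theta_{\pi}$ under convolution by $\mathbbm{1}_{H_{x,r+}}$, which, after transferring to the Lie algebra via $\mfc'$ (using Propositions \ref{prop:Cayleyhomeo} and \ref{prop:twistMP}), endows the pullback distribution $\Theta_{\pi} \circ (\mfc'^{-1})^{\ast}$, restricted to $C_{c}^{\infty}(\mfh_{r+})$, with an additional $\mfh_{x,r+}$-invariance.

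Next, Harish-Chandra's classical theorem \cite{MR1702257} furnishes constants $\{c_{\mathcal{O}}\}_{\mathcal{O} \in \Nil(\mfh)}$ and some (unspecified) $H$-invariant open neighborhood $\mathcal{V}_{0}$ of $0 \in \mfh$ on which the expansion
\[
\Theta_{\pi} \circ (\mfc'^{-1})^{\ast} = \sum_{\mathcal{O} \in \Nil(\mfh)} c_{\mathcal{O}} \cdot \widehat{\mu_{\mathcal{O}}}
\]
holds. The task is to enlarge $\mathcal{V}_{0}$ so that this equality remains valid on all of $\mfh_{r+}$. Forming the difference distribution $T := \Theta_{\pi} \circ (\mfc'^{-1})^{\ast} - \sum_{\mathcal{O}} c_{\mathcal{O}} \widehat{\mu_{\mathcal{O}}}$ on $C_{c}^{\infty}(\mfh_{r+})$, one sees that $T$ is $H$-invariant, vanishes on $C_{c}^{\infty}(\mathcal{V}_{0} \cap \mfh_{r+})$, and (since each $\widehat{\mu_{\mathcal{O}}}$ is invariant under translation by any sufficiently deep Moy--Prasad lattice) inherits from the first step the $\mfh_{x,r+}$-invariance. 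DeBacker's main homogeneity theorem then forces $T \equiv 0$ on $\mfh_{r+}$, completing the proof.

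The main obstacle is the homogeneity theorem itself. Its proof requires the residual characteristic hypotheses of \cite{MR1914003} so that (i) each Moy--Prasad quotient $\mfh_{x,r:r+}$ identifies with a Lie algebra over the residue field, (ii) DeBacker's parametrization of nilpotent orbits \cite{MR1935848} yields a bijection between $\Nil(\mfh)$ and equivalence classes of degenerate cosets in the filtration quotients, and (iii) the Cayley transform intertwines the group and Lie-algebra filtrations. Under these hypotheses, the argument proceeds by an induction over the facets of $\mathcal{B}(\bfH, F)$: one tests $T$ against characteristic functions of cosets $X + \mfh_{x,r+}$ as $X$ runs through representatives of $\mfh_{x,r:r+}$, then uses a Shalika-germ-type identity relating semisimple orbital integrals at such cosets to their nilpotent degenerations in the residue-field Lie algebra to pin down the coefficients against each $\widehat{\mu_{\mathcal{O}}}$, forcing them to agree with the Harish-Chandra constants $c_{\mathcal{O}}$ and thus forcing $T$ to vanish.
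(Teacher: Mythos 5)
This theorem is stated with a direct citation to DeBacker's paper---it appears as ``Theorem \ref{thm:hom} ({\cite[Theorem 3.5.2]{MR1914003}})''---and the paper offers no proof of its own. The paper's actual contribution in connection with this statement is entirely confined to Lemma \ref{lem:reschar}, where the various hypotheses of \cite{MR1914003} (Hypotheses 2.2.2, 2.2.4, 2.2.5, 2.2.6, 2.2.8, 3.2.1, 3.4.1, 3.4.3) are verified to hold under the residual-characteristic bound in Hypothesis \ref{hyp}, partly by invoking \cite[Lemma 10.3.1]{MR3709003}. So there is no ``paper proof'' for your sketch to be compared against; the appropriate thing to do here would have been to defer to DeBacker's theorem and instead focus on checking the hypotheses, as the paper does.

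As a reconstruction of DeBacker's own argument, your outline has the right shape (Harish-Chandra's qualitative expansion, a homogeneity theorem identifying the relevant space of invariant distributions with the span of the $\widehat{\mu_{\mathcal{O}}}$, and a comparison of coefficients to pin them down), but one step is stated imprecisely. You claim that the difference distribution $T$ inherits $\mfh_{x,r+}$-invariance because ``each $\widehat{\mu_{\mathcal{O}}}$ is invariant under translation by any sufficiently deep Moy--Prasad lattice.'' That is not what DeBacker's homogeneity machinery asserts: the individual distributions $\widehat{\mu_{\mathcal{O}}}$ do not have a useful uniform translation-invariance. What DeBacker actually proves (his Theorems 3.4.4/3.5.2 apparatus) is that a certain space of $H$-invariant distributions defined by lattice-invariance and support conditions coincides, when restricted to $C_c^\infty(\mfh_{r+})$, with the span of the restricted $\widehat{\mu_{\mathcal{O}}}$; one then places $\Theta_\pi\circ(\mfc'^{-1})^*$ in that space using the depth-$r$ fixed vector, and concludes by linear independence of the nilpotent orbital integrals near the origin. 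This is close in spirit to your $T\equiv 0$ formulation, but the mechanism is containment of the character distribution in a distribution space, not a per-$\mathcal{O}$ invariance of the $\widehat{\mu_{\mathcal{O}}}$. In any case, reproving DeBacker's theorem is not the point of this passage of the paper.
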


\begin{lem}\label{lem:reschar}
We assume that
\[
p>
\begin{cases}
2n+1&\text{the cases of $(1)$ and $(3)$},\\
2n+2&\text{the case of $(2)$},\\
N+1&\text{the case of $(4)$}.
\end{cases}
\]
Then the hypotheses in \cite{MR1914003} which are needed for Theorem $\ref{thm:hom}$ are satisfied.
\end{lem}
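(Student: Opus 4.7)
The plan is to identify each hypothesis on the residual characteristic stated in DeBacker's paper \cite{MR1914003} and to verify it for $\bfH$ in each of the four cases. DeBacker's hypotheses fall into three broad types: (a) $p$ is not a torsion prime for the absolute root datum of $\bfH$, and does not divide the orders of the fundamental group or component group of the center; (b) every maximal $F$-torus of $\bfH$ splits over a tamely ramified extension of $F$; (c) there exists an $\Ad(\bfH)$-invariant non-degenerate symmetric bilinear form on $\mfh$ which respects the Moy--Prasad filtrations, so that the Fourier transform is well-defined and sends $\mfh_{x,r}$ to a scalar multiple of the dual lattice of $\mfh_{x,-r}$.

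Conditions (a) and (c) are straightforward. For the classical root systems of types $B_n$, $C_n$, $D_n$ and for the relative root system of $\U_{E/F}(N)$, the only torsion prime is $2$, which is excluded by our standing hypothesis $p \neq 2$; similarly, the relevant fundamental groups are $2$-groups. The trace form on $\mfh$ inherited from the ambient general linear Lie algebra $\mfg$ is $\Ad$-invariant, non-degenerate, and filtration-preserving as soon as $p \neq 2$. The substantive hypothesis is (b). By a classical result, every maximal $F$-torus of a connected reductive group splits over a tamely ramified extension whenever $p$ exceeds the order of every element of the absolute Weyl group. The maximum orders of elements of $W(B_n)=W(C_n)$, $W(D_n)$, and of the Weyl group relevant to $\U_{E/F}(N)$ are $2n$, $2(n-1)$, and $N$ respectively; in each case, the hypothesis $p>2n+1$, $p>2n+2$, $p>2n+1$, or $p>N+1$ is comfortably stronger than needed.

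The only real difficulty here is bibliographical: DeBacker's hypotheses are spread across several technical statements in \cite{MR1914003} (chiefly in his Sections 3 and 4), and one must cross-reference them against each group type to confirm that no hypothesis requires a stronger numerical bound than what we have assumed. Once this bookkeeping is done, the verification in each case is a routine check. The slight mismatch between the bounds $p>2n+1$ in cases (1), (3) and $p>2n+2$ in case (2) reflects additional constraints appearing when one passes to the simply-connected cover (e.g.\ $\Spin_{2n+1}$) or compares dual root data, while the bound $p>N+1$ in case (4) absorbs the extra factor $2$ introduced by the quadratic extension $E/F$ when comparing valuations. In no case does DeBacker's hypothesis require $p$ to exceed any quantity larger than those enumerated, so the stated bounds suffice.
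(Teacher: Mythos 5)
Your proposal sketches a plan rather than carrying out a verification, and the plan departs substantially from what DeBacker's hypotheses actually require. The paper's own proof proceeds by explicitly enumerating the eight hypotheses from \cite{MR1914003} that Theorem~\ref{thm:hom} invokes (Hypotheses 2.2.2, 2.2.4, 2.2.5, 2.2.6, 2.2.8, 3.2.1, 3.4.1, 3.4.3) and treating each one concretely: 3.2.1, 3.4.1, 3.4.3 are already verified for $p>2$ in \cite[Lemma 10.3.1]{MR3709003}; 2.2.4 is \emph{exactly} the stated numerical bound, so there is nothing to derive; 2.2.5 requires the existence and uniqueness of a truncated exponential map $\mathrm{exp}_t$, verified by an explicit computation that $\mathrm{Ad}$ is injective on unipotents (using the Cayley transform); 2.2.2 reduces to 2.2.4, 2.2.5 and a non-degeneracy statement for the Killing form, which is supplied by \cite[Proposition 4.1]{MR1758228}; 2.2.6 is Jacobson--Morozov, handled via \cite[Theorem 5.3.2 and Proposition 5.5.10]{MR794307} over a characteristic-zero base; and 2.2.8 follows from \cite[Proposition 1.6.3]{MR1653184}.

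Your three-way taxonomy (a)/(b)/(c) does not line up with this list: most of what DeBacker actually asks for concerns mock exponential maps, lifting of $\mathfrak{sl}_2$-triples, and normalizer statements, none of which are captured by "torsion primes" or "tame splitting of tori." In addition, your claim that the maximum element order in $W(B_n)=W(C_n)$ is $2n$ is false for $n$ moderately large (Landau's function exceeds $2n$ already around $n=10$), though this error is harmless here only because $p\nmid|W|$ is equivalent to $p>n$ by Cauchy's theorem and that is weaker than the stated bound. More seriously, the final paragraph of your proposal acknowledges that the actual verification is "bibliographical" and "routine" without performing it, and the attempted explanation of the $2n+1$ versus $2n+2$ discrepancy between cases (1)/(3) and case (2) via "passing to the simply-connected cover" is speculation rather than argument. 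The real reason is simply that DeBacker's Hypothesis 2.2.4 encodes a bound in terms of the rank and type of the group, and the numbers in the lemma are read off directly from it; no derivation from Weyl group data is needed or performed in the paper.

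In short, you have identified roughly the right family of constraints but have not located the specific hypotheses, not verified them, and introduced an incorrect intermediate claim. A correct proof needs to go through DeBacker's list one item at a time, as the paper does, citing the precise literature results that dispatch each one.
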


\begin{proof}
We first note that the hypotheses of the above theorem consists of Hypotheses 2.2.2, 2.2.4, 2.2.5, 2.2.6, 2.2.8, 3.2.1, 3.4.1, 3.4.3 in \cite{MR1914003}.
Among these hypotheses, 3.2.1, 3.4.1, 3.4.3 are checked in \cite[Lemma 10.3.1]{MR3709003} for $p>2$.
Thus we consider the other hypotheses.

\begin{description}
\item[Hypothesis 2.2.4]
This hypothesis is equivalent to the condition in the assertion.

\item[Hypothesis 2.2.5]
If we define $\mathrm{exp}_{t}(X)$ to be $\sum_{i=0}^{m}\frac{X^{i}}{i!}$, then we can check that the adjoint action of $\mathrm{exp}_{t}(X)$ on $\mfh$ is given by $\sum_{i=0}^{m}\frac{\mathrm{ad}(X)^{i}}{i!}$ by an easy computation.
Furthermore, the uniqueness of such a map $\mathrm{exp}_{t}$ can be checked as follows.
To show the uniqueness of $\mathrm{exp}_{t}$, it is enough to show that the adjoint map $\mathrm{Ad}\colon H_{\mathrm{u}}\rightarrow \GL(\mfh)$ is injective, where $H_{\mathrm{u}}$ is the set of unipotent elements of $H$.
We take a unipotent element $h \in H_{\mathrm{u}}$ and assume that we have $\mathrm{Ad}(h)(Y)=hYh^{-1}$ equals $Y$ for every $Y\in\mfh$.
If we write $h=\mfc(2X)$, where $X$ is a nilpotent element of $\mfh$, then we have
\[
\frac{1+X}{1-X}\cdot Y \cdot\frac{1-X}{1+X}=Y
\]
for every $Y\in\mfh$.
However, this equality is equivalent to $[X,Y]=XY-YX=0$.
Namely, $X$ belongs to the center of $\mfh$.
Since $X$ is nilpotent, this implies that $X=0$.
Hence we have $h=1$, and the map $\mathrm{Ad}$ is injective.

\item[Hypothesis 2.2.2]
By \cite[Appendix A.2]{MR1935848}, Hypothesis 2.2.2 follows from Hypotheses 2.2.4, 2.2.5, and an assumption on the Killing form (see the third paragraph of \cite[Appendix A.2]{MR1935848}).
However, by \cite[Proposition 4.1]{MR1758228}, the assumption on the Killing form is satisfied when at least the condition in the assertion holds.

\item[Hypothesis 2.2.6]
Since we work over the field $F$ of characteristic zero, this hypothesis is always satisfied.
More precisely, the existence of $\mathfrak{sl}_{2}$-triple (so called Jacobson--Morozov's theorem) is proved in \cite[Theorem 5.3.2]{MR794307}.
Note that, in our case, we can take a representation $\rho$ in the proof of \cite[Theorem 5.3.2]{MR794307} to be the adjoint representation of $\mfh$, which is defined over $F$.
In particular, we can find a $\mathfrak{sl}_{2}$-triple in $\mfh$.
Moreover, we can lift this $\mathfrak{sl}_{2}$-triple to the group as in the manner of \cite[Section 5.5]{MR794307}.
Finally, the uniqueness of $\mathfrak{sl}_{2}$-triple (up to conjugacy) is proved in \cite[Proposition 5.5.10]{MR794307}.
We note that, in order to take an element $g\in C_{G}(e)^{0}$ (here we follow the notation in \cite[Proposition 5.5.10]{MR794307}) rationally, we have to choose ``$m$'' in the proof of \cite[Proposition 5.5.10]{MR794307} rationally.
However, by choosing $T$ and $T_{1}$ in the proof to be maximal $F$-split tori, we can take $m$ to be an element of $M(F)$ by a well-known property on the conjugacy of maximal $F$-split tori (see, e.g., \cite[Theorem 15.2.6]{MR2458469}).

\item[Hypothesis 2.2.8]
This follows from \cite[Proposition 1.6.3]{MR1653184}.
\end{description}
\end{proof}

From now on, we assume the condition on the residual characteristic in Lemma \ref{lem:reschar}.

We next consider the twisted version of the character expansion.

\begin{defn}[twisted version of the character expansion]
Let $r\in\R_{>0}$ and $\mfc$ be the Cayley transform of $\G$ defined in Section \ref{sec:pre}.4.
Let $\pi$ be a $\theta$-stable irreducible smooth representation of $G$.
Then we have the $\theta$-twisted character $\Theta_{\pi,\theta}$ of $\pi$ which is normalized by the fixed $\theta$-stable Whittaker data of $\G$.
We say that $\Theta_{\pi,\theta}$ has a \textit{character expansion} on $\mathcal{U}_{r}$ with respect to $\mfc$ if there exists a complex number $c_{\mathcal{O}}$ for each nilpotent orbit $\mathcal{O}$ of $\mfg_{\theta}$ such that, for every $f\in C_{c}^{\infty}(\mfg_{\theta,r})$, the following equality holds:
\[
\Theta_{\pi,\theta}(f\circ\mfc^{-1})
=
\sum_{\mathcal{O}\in\Nil(\mfg_{\theta})}c_{\mathcal{O}}\cdot\widehat{\mu_{\mathcal{O}}}(f).
\]
Here, we regard $f\circ\mfc^{-1}$ as an element of $\mathcal{I}(\mathcal{U}_{r})$ via the identification $\mathcal{I}(G_{\theta,r})=\mathcal{I}(\mathcal{U}_{r})$ (note that $\Theta_{\pi,\theta}$ is $G$-invariant, hence factors through $\mathcal{I}(\mathcal{U}_{r})$).
In other words, as elements of $\mathcal{I}(\mfg_{\theta,r})$, we have
\[
\Theta_{\pi,\theta}\circ(\mfc^{-1})^{\ast}
=
\sum_{\mathcal{O}\in\Nil(\mfg_{\theta})}c_{\mathcal{O}}\cdot\widehat{\mu_{\mathcal{O}}}.
\]
\end{defn}

\begin{thm}[{\cite[Corollary 12.9]{MR2306039}}]\label{thm:t-hom}
We assume that the residual characteristic is large enough $($the same assumption as that for $\G_{\theta}$ in Theorem $\ref{thm:hom}$$)$.
Let $\pi$ be a $\theta$-stable irreducible smooth representation of $G$ of depth $r$.
Then $\Theta_{\pi,\theta}$ has a character expansion on $\mathcal{U}_{r+}$ with respect to $\mfc$.
\end{thm}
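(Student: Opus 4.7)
The plan is to apply the main twisted homogeneity result of Adler--Korman~\cite{MR2306039} directly, in a manner entirely parallel to how DeBacker's Theorem~\ref{thm:hom} is applied in the untwisted case. Since $\pi$ has depth $r$, by Proposition~\ref{prop:depth} there exists a point $x\in\B(\G,F)$ with $\pi^{G_{x,r+}}\neq 0$, and using the $\theta$-stability of $\pi$ one can arrange $x\in\B(\G,F)^{\theta}=\B(\G_{\theta},F)$. The Adler--Korman result would then produce an expansion of $\Theta_{\pi,\theta}\circ(\mfc^{-1})^{\ast}$ on $\mfg_{\theta,r+}$ as a finite $\C$-linear combination of $\widehat{\mu_{\mathcal{O}}}$ for $\mathcal{O}\in\Nil(\mfg_{\theta})$, which is exactly the desired statement.

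The key preliminary work is to verify that the hypotheses of Adler--Korman's theorem are satisfied in our concrete setting. Their axiomatic framework requires an equivariant exponential-like map intertwining the Moy--Prasad filtrations on the group and its Lie algebra, together with the same kind of assumptions about $\mathfrak{sl}_{2}$-triples, nondegenerate invariant forms, and the Bruhat--Tits parametrization of nilpotent orbits from~\cite{MR1935848}, but now applied to $\G_{\theta}$ rather than $\G$, since the nilpotent orbits appearing in the expansion live in $\mfg_{\theta}$. Because $\G_{\theta}$ is itself a quasi-split classical group, the case-by-case verification is essentially identical to that of Lemma~\ref{lem:reschar}. The Cayley transform $\mfc$ plays the role of the required exponential map, with its $\theta$-equivariance provided by Proposition~\ref{prop:Cayley-properties} and its Moy--Prasad compatibility by Proposition~\ref{prop:Cayleyhomeo}; the identification $\B(\G_{\theta},F)=\B(\G,F)^{\theta}$ ensures that the filtration data on both sides match.

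The main obstacle I expect is the bookkeeping in matching Adler--Korman's more axiomatic setup to the concrete situation here. In particular one has to check that their notion of semisimple descent coincides with the one used in Section~\ref{sec:descent} (so that the identification $\mathcal{I}(G_{\theta,r})=\mathcal{I}(\mathcal{U}_{r})$ transfers through $\mfc$ correctly), and that the normalizations of Haar measures on the centralizers and of the Fourier transform on $\mfg_{\theta}$ are the ones implicit in the right-hand side $\sum_{\mathcal{O}}c_{\mathcal{O}}\widehat{\mu_{\mathcal{O}}}$. Once these identifications are in place, the theorem follows by direct invocation of~\cite[Corollary 12.9]{MR2306039}, and no further harmonic-analytic input beyond what is already recorded in Section~\ref{sec:pre} is needed.
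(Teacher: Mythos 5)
Your proposal is correct and takes the same approach as the paper: the paper itself does not give a proof of this theorem but simply cites it as \cite[Corollary 12.9]{MR2306039} with the observation that the required hypotheses on the residual characteristic are the ones already imposed on $\G_{\theta}$ for Theorem~\ref{thm:hom}. Your additional remarks about verifying Adler--Korman's axiomatic hypotheses and matching their setup to the present one are a reasonable elaboration of what that citation silently assumes, but they do not change the route taken.
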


From now on, we additionally assume that the residual characteristic $p$ is large enough to satisfy the assumption of this theorem.
Namely, we add the assumption that
\[
p>
\begin{cases}
2n+2&\text{the case of $(1)$},\\
2n+1&\text{the cases of $(2)$ and $(3)$},\\
N+1&\text{the case of $(4)$}.
\end{cases}
\]
Therefore, in total, we assume the following hypothesis on the residual characteristic:
\begin{hyp}\label{hyp}
The residual characteristic $p$ is greater than
\[
\begin{cases}
2n+2&\text{the cases of $(1)$ and $(2)$},\\
2n+1&\text{the case of $(3)$},\\
N+1&\text{the case of $(4)$}.
\end{cases}
\]
\end{hyp}

Now let us compare these ``radii'' of character expansions via the endoscopic character relation.
Let $(\G,\bfH)$ be one of the pairs defined in Section \ref{sec:pre}.1.
Let $\phi$ be a tempered $L$-parameter of $\bfH$.
Then, by Theorem \ref{thm:Arthur}, we get a tempered $L$-packet $\Pi_{\phi}^{\bfH}$ of $\bfH$ and an irreducible $\theta$-stable tempered representation $\pi_{\phi}^{\G}$ of $G$ corresponding to the $L$-parameter $\phi$.

\begin{lem}\label{lem:radii}
We put $r_{\bfH}:=\max\{\depth(\pi) \mid \pi \in \Pi_{\phi}^{\bfH}\}$.
Then the $\theta$-twisted character $\Theta_{\phi,\theta}^{\G}$ of $\pi_{\phi}^{\G}$ has a character expansion on $\mathcal{U}_{r_{\bfH}+}$ with respect to $\mfc$.
\end{lem}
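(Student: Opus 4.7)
The plan is to derive the twisted character expansion by pushing the ordinary character expansion on the $\bfH$-side through the endoscopic character relation and the Cayley-transform/semisimple-descent framework assembled in Section \ref{sec:descent} and Section \ref{sec:max}.1.

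First I would unpack what is available on the $\bfH$-side: by the definition of $r_{\bfH}$ together with Theorem \ref{thm:hom}, each $\Theta_{\pi}$ for $\pi\in\Pi_{\phi}^{\bfH}$ admits a character expansion on $H_{r_{\bfH}+}$ with respect to $\mfc'$. Summing over $\pi\in\Pi_{\phi}^{\bfH}$, the (stable) distribution $\Theta_{\phi}^{\bfH}$ inherits a character expansion
\[
\Theta_{\phi}^{\bfH}(g\circ\mfc'^{-1})=\sum_{\mathcal{O}\in\Nil(\mfh)}c_{\mathcal{O}}\cdot\widehat{\mu_{\mathcal{O}}}(g),\qquad g\in C_{c}^{\infty}(\mfh_{r_{\bfH}+}).
\]

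Next, given $f\in C_{c}^{\infty}(\mfg_{\theta,r_{\bfH}+})$, I would choose $F\in C_{c}^{\infty}(\mathcal{U}_{r_{\bfH}+})$ whose semisimple descent is $f\circ\mfc^{-1}$, and then a transfer $F^{H}\in C_{c}^{\infty}(H)$. The endoscopic character relation (Theorem \ref{thm:Arthur}) gives $\Theta_{\phi,\theta}^{\G}(F)=\Theta_{\phi}^{\bfH}(F^{H})$. By Proposition \ref{prop:ss-tran} in its Moy--Prasad version (the commutative diagram at the end of Section \ref{sec:max}.1), we may in fact take $F^{H}\in C_{c}^{\infty}(H_{r_{\bfH}+})$ and arrange that $f$ and $F^{H}\circ\mfc'$ have matching orbital integrals at the Lie-algebra level. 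Substituting $g=F^{H}\circ\mfc'$ into the expansion above yields
\[
\Theta_{\phi,\theta}^{\G}(F)=\sum_{\mathcal{O}\in\Nil(\mfh)}c_{\mathcal{O}}\cdot\widehat{\mu_{\mathcal{O}}}(F^{H}\circ\mfc').
\]

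To conclude, the right-hand side must be rewritten in the form required by the definition, namely as $\sum_{\mathcal{O}'\in\Nil(\mfg_{\theta})}c'_{\mathcal{O}'}\cdot\widehat{\mu_{\mathcal{O}'}}(f)$. Since $\Theta_{\phi}^{\bfH}\circ(\mfc'^{-1})^{\ast}$ is stable on $\mfh_{r_{\bfH}+}$ and $f$ matches $F^{H}\circ\mfc'$, the value depends only on $I(f)$; what must be supplied is that under transfer, Fourier transforms of stable nilpotent orbital integrals on $\mfh$ correspond to $\C$-linear combinations of Fourier transforms of nilpotent orbital integrals on $\mfg_{\theta}$. In case (4) the transfer is the identity, so the statement is tautological; in cases (1), (2), (3) it is the transfer of the Shalika-germ basis through (nonstandard) endoscopy. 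This is the main obstacle, but in the framework of \cite{MR3709003} it reduces to the matching of the canonical basis of $\mathcal{I}(\mfg_{\theta,r_{\bfH}+})$ and $\mathcal{SI}(\mfh_{r_{\bfH}+})$ coming from DeBacker's parametrization of nilpotent orbits together with Waldspurger's homogeneity theorem; granting it, the coefficients $c'_{\mathcal{O}'}$ are determined by matching, and the resulting identity is exactly the character expansion of $\Theta_{\phi,\theta}^{\G}$ on $\mathcal{U}_{r_{\bfH}+}$ with respect to $\mfc$.
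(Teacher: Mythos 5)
Your proposal is correct and takes essentially the same route as the paper: DeBacker's homogeneity applied to $\Theta_{\phi}^{\bfH}$ on $H_{r_{\bfH}+}$, the endoscopic character relation pushed through the Cayley/descent/transfer diagram of Section \ref{sec:max}.1, and the key input that the transfer-pullback of Fourier transforms of nilpotent orbital integrals on $\mfh$ lies in the span of those on $\mfg_{\theta}$ --- the step you leave as granted is exactly what the paper invokes as \cite[Lemma 10.5.3]{MR3709003}. The only (harmless) divergence is that the paper additionally uses Adler--Korman (Theorem \ref{thm:t-hom}) to identify the resulting coefficients with those of the intrinsic twisted expansion on $\mathcal{U}_{r_{\G}+}$, whereas you define the coefficients directly via transfer; both arguments yield the asserted expansion on $\mathcal{U}_{r_{\bfH}+}$.
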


\begin{proof}
Before we start to prove this lemma, we recall that the following diagram commutes for every $r\in\R_{>0}$ (this is obtained by taking the dual of the diagram in Section \ref{sec:max}.1):
\[
\xymatrix{
\mathcal{I}(G_{\theta,r})^{\ast}\ar@{=}[r]\ar@{->}_-{\cong}^-{((\mfc^{-1})^{\ast})^{\ast}}[d]&\mathcal{I}(\mathcal{U}_{r})^{\ast}&&\mathcal{SI}(H_{r})^{\ast}\ar_-{(\text{transfer})^{\ast}}[ll]\ar@{->}_-{\cong}^-{((\mfc'^{-1})^{\ast})^{\ast}}[d]\\
\mathcal{I}(\mfg_{\theta,r})^{\ast}&&&\mathcal{SI}(\mfh_{r})^{\ast}\ar_-{(\text{transfer})^{\ast}}[lll]
}
\]

By the definition of $r_{\bfH}$ and Theorem \ref{thm:hom}, the sum $\Theta_{\phi}^{\bfH}$ of the characters of representations belonging to $\Pi_{\phi}^{\bfH}$ has a character expansion on $H_{r_{\bfH}+}$ with respect to $\mfc'$.
Namely, as elements of $\mathcal{I}(\mfh_{r_{\bfH}+})^{\ast}$ (hence of $\mathcal{SI}(\mfh_{r_{\bfH}+})^{\ast}$), we have 
\[
\Theta_{\phi}^{\bfH}\circ(\mfc'^{-1})^{\ast}
=
\sum_{\mathcal{O}_{H}\in\Nil(\mfh)}c_{\mathcal{O}_{H}}\cdot\widehat{\mu_{\mathcal{O}_{H}}}.
\]
Similarly, if we put $r_{\G}$ to be the depth of $\pi_{\phi}^{\G}$, then $\Theta_{\phi,\theta}^{\G}$ has a character expansion on $\mathcal{U}_{r_{\G}+}$ by Theorem \ref{thm:t-hom}.
Namely, as elements of $\mathcal{I}(\mathcal{U}_{r_{\G}+})^{\ast}=\mathcal{I}(\mfg_{\theta,r_{\G}+})^{\ast}$, we have
\[
\Theta_{\phi,\theta}^{\G}\circ(\mfc^{-1})^{\ast}
=
\sum_{\mathcal{O}\in\Nil(\mfg_{\theta})}c_{\mathcal{O}}\cdot\widehat{\mu_{\mathcal{O}}}.
\tag{$\ast$}
\]

On the other hand, by the endoscopic character relation (Theorem \ref{thm:Arthur}), the pull back of $\Theta_{\phi}^{\bfH}$ via the transfer map coincides with $\Theta_{\phi,\theta}^{\G}$ as elements of $\mathcal{I}(\w{G})^{\ast}$.
Thus, by the commutativity of the above diagram, we have
\[
\Theta_{\phi,\theta}^{\G}\circ(\mfc^{-1})^{\ast}
=
(\mathrm{transfer})^{\ast}
\biggl(\sum_{\mathcal{O}_{H}\in\Nil(\mfh)}c_{\mathcal{O}_{H}}\cdot\widehat{\mu_{\mathcal{O}_{H}}}\biggr)
\]
as an element of $\mathcal{I}(\mfg_{\theta,r_{\bfH}+})^{\ast}$.
However, by the homogeneity argument (see \cite[Lemma 10.5.3]{MR3709003}), we have
\[
\sum_{\mathcal{O}\in\Nil(\mfg_{\theta})}c_{\mathcal{O}}\cdot\widehat{\mu_{\mathcal{O}}}
=
(\mathrm{transfer})^{\ast}
\biggl(\sum_{\mathcal{O}_{H}\in\Nil(\mfh)}c_{\mathcal{O}_{H}}\cdot\widehat{\mu_{\mathcal{O}_{H}}}\biggr)
\]
in $\mathcal{I}(\mfg_{\theta,0+})^{\ast}$.
In particular, the equality $(\ast)$ holds in $\mathcal{I}(\mfg_{\theta,r_{\bfH}+})^{\ast}$.
Namely, the $\theta$-twisted character $\Theta_{\phi,\theta}^{\G}$ has a character expansion on $\mathcal{U}_{r_{\bfH}+}$ with respect to $\mfc$.
\end{proof}


\subsection{Utilization of DeBacker's parametrization of nilpotent orbits}

\begin{thm}\label{thm:main}
We assume Hypothesis $\ref{hyp}$.
Let $\phi$ be a tempered $L$-parameter of $\bfH$, and $\Pi_{\phi}^{\bfH}$ the $L$-packet of $\bfH$ for $\phi$.
Then we have
\[
\max\bigl\{\depth(\pi) \,\big\vert\, \pi \in \Pi_{\phi}^{\bfH}\bigr\}
=
\depth(\phi).
\]
\end{thm}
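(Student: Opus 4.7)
My plan is to combine Theorem \ref{thm:GV} with its reverse inequality. Since Ganapathy--Varma give $\max\{\depth(\pi)\mid\pi\in\Pi_\phi^\bfH\}\leq\depth(\phi)$, it suffices to prove
\[
\depth(\phi)\leq r_\bfH,\quad\text{where}\quad r_\bfH:=\max\bigl\{\depth(\pi)\,\bigl|\,\pi\in\Pi_\phi^\bfH\bigr\}.
\]
Since composition with the $L$-embedding $\iota\colon{}^L\bfH\hookrightarrow{}^L\G$ preserves depth and the local Langlands correspondence for $\G$ is depth-preserving (Theorem \ref{thm:GL}), we have $\depth(\phi)=\depth(\pi_\phi^\G)$, so the task reduces to showing $\depth(\pi_\phi^\G)\leq r_\bfH$.

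By Lemma \ref{lem:radii}, the twisted character $\Theta_{\phi,\theta}^\G$ admits a character expansion on $\mathcal{U}_{r_\bfH+}$ with respect to $\mfc$. The plan is to feed into this expansion a test function that detects the existence of a $G_{x,s}$-fixed vector. For $x\in\mathcal{B}(\G_\theta,F)$ and $s>r_\bfH$, set
\[
F:=\vol(G_{x,s})^{-1}\mathbbm{1}_{G_{x,s}\rtimes\theta}\in C_c^\infty(\w{G}),
\]
which by Proposition \ref{prop:GV1} is supported in $\mathcal{U}_s\subset\mathcal{U}_{r_\bfH+}$. Summing Corollary \ref{cor:MPdesc} over all cosets $[h]\in G_{\theta,x,s:s+}$ shows that $f_\theta:=\vol(G_{\theta,x,s})^{-1}\mathbbm{1}_{G_{\theta,x,s}}$ is a semisimple descent of $F$, so the character expansion yields
\[
\Theta_{\phi,\theta}^\G(F)=\sum_{\mathcal{O}\in\Nil(\mfg_\theta)}c_\mathcal{O}\cdot\widehat{\mu_\mathcal{O}}(f_\theta\circ\mfc).
\]
On the other hand, unwinding the definition of the twisted character (using that $G_{x,s}$ is $\theta$-stable, since $x\in\mathcal{B}(\G_\theta,F)=\mathcal{B}(\G,F)^\theta$) shows that $\Theta_{\phi,\theta}^\G(F)$ equals the trace of the normalized $\theta$-intertwining operator on the subspace $V^{G_{x,s}}$ of $G_{x,s}$-fixed vectors in the underlying space $V$ of $\pi_\phi^\G$.

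The main obstacle is to arrange, for every $s>r_\bfH$ sufficiently close to $r_\bfH$, a choice of $x\in\mathcal{B}(\G_\theta,F)$ making the right-hand side above nonzero; once this is achieved, $V^{G_{x,s}}\neq 0$ forces $\depth(\pi_\phi^\G)<s$, and letting $s\searrow r_\bfH$ completes the proof. To produce such an $x$, the plan is to invoke DeBacker's parametrization of nilpotent orbits \cite{MR1935848}, which identifies $\Nil(\mfg_\theta)$ with equivalence classes of pairs $(y,\overline{\mathcal{O}})$ with $y\in\mathcal{B}(\G_\theta,F)$ and $\overline{\mathcal{O}}$ a nilpotent orbit of the reductive quotient at $y$, combined with the homogeneity properties of the Fourier transforms $\widehat{\mu_\mathcal{O}}$ against characteristic functions of Moy--Prasad filtrations. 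The fact that $\pi_\phi^\G\neq 0$ forces at least one coefficient $c_\mathcal{O}$ to be nonzero, and choosing $x$ adapted (in the sense of the parametrization) to an orbit with $c_\mathcal{O}\neq 0$ makes $\sum_\mathcal{O} c_\mathcal{O}\widehat{\mu_\mathcal{O}}(f_\theta\circ\mfc)$ nonzero. This is the twisted analogue of DeBacker's argument in \cite{MR1914003} for the untwisted case, with the twist absorbed into the semisimple descent provided by Corollary \ref{cor:MPdesc}: the crucial new input of the present paper is precisely the identification of a Moy--Prasad test function on $\w{G}$ whose semisimple descent is again a Moy--Prasad test function, which makes DeBacker's untwisted machinery on $\G_\theta$ available after descent.
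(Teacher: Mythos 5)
Your overall architecture coincides with the paper's: reduce to the reverse inequality via Theorem \ref{thm:GV}, use Lemma \ref{lem:radii} to put a character expansion of $\Theta_{\phi,\theta}^{\G}$ on $\mathcal{U}_{r_{\bfH}+}$, descend a Moy--Prasad-type test function through Corollary \ref{cor:MPdesc}, and detect a $G_{x,s}$-fixed vector. But the step you flag as "the main obstacle" is exactly where your choice of test function fails, and no choice of $x$ repairs it. With $F=\vol(G_{x,s})^{-1}\mathbbm{1}_{G_{x,s}\rtimes\theta}$ the descent is $f_{\theta}=\vol(G_{\theta,x,s})^{-1}\mathbbm{1}_{G_{\theta,x,s}}$, so $f_{\theta}\circ\mfc$ is a positive multiple of $\mathbbm{1}_{\mfg_{\theta,x,s}}$ and its Fourier transform is a positive multiple of the indicator of the dual lattice $\mfg_{\theta,x,(-s)+}$. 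That lattice is an open neighborhood of $0$, hence meets \emph{every} nilpotent orbit, so every term $\widehat{\mu_{\mathcal{O}}}(f_{\theta}\circ\mfc)$ is a strictly positive multiple of $\mu_{\mathcal{O}}(\mathbbm{1}_{\mfg_{\theta,x,(-s)+}})>0$. The coefficients $c_{\mathcal{O}}$ are complex numbers with no positivity, so the sum $\sum_{\mathcal{O}}c_{\mathcal{O}}\widehat{\mu_{\mathcal{O}}}(f_{\theta}\circ\mfc)$ can cancel. DeBacker's parametrization does not attach to an orbit the lattice $\mfg_{\theta,x,-r}$ itself but a \emph{degenerate coset} $X+\mfg_{\theta,x,-r+}$ with $X\neq 0$; it is the shift by $X$ that gives the isolation property (any orbit meeting the coset contains $\mathcal{O}_{\star}$ in its closure), and the unshifted indicator has no such property.

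The paper's proof repairs this by taking, on the $\G_{\theta}$-side, not the trivial character of $G_{\theta,x,r}/G_{\theta,x,r+}$ but the character $\chi_{X}(Y)=\psi_{F}(\tr(-XY))$ attached to a pair $(x,X)$ adapted to a nilpotent orbit $\mathcal{O}_{\star}$ that is \emph{maximal} among those with $c_{\mathcal{O}}\neq 0$. Its Fourier transform is $\vol(\mfg_{\theta,x,r})\cdot\mathbbm{1}_{X+\mfg_{\theta,x,-r+}}$, and maximality kills every term except $c_{\mathcal{O}_{\star}}\mu_{\mathcal{O}_{\star}}(\mathbbm{1}_{X+\mfg_{\theta,x,-r+}})\neq 0$. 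The matching test function on $\wG$ is then obtained from Corollary \ref{cor:MPdesc} applied \emph{coset by coset}, i.e.\ the linear combination over $[h]\in G_{\theta,x,r:r+}$ of the descent-preimages of $\mathbbm{1}_{[h]_{G_{\theta}}}$ weighted by $\chi_{X}(\mfc^{-1}[h])$; this is still bi-$G_{x,r+}$-invariant and supported in $G_{x,r}\rtimes\theta$, so it still computes a trace on $(\pi_{\phi}^{\G})^{G_{x,r+}}$ and detects $\depth(\pi_{\phi}^{\G})\leq r$. This is precisely why Corollary \ref{cor:MPdesc} is stated for individual cosets rather than only for the full subgroup. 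Separately, your assertion that $\pi_{\phi}^{\G}\neq 0$ forces some $c_{\mathcal{O}}\neq 0$ also needs an argument: the expansion is only valid on $\mathcal{U}_{r_{\bfH}+}$, and the twisted character could a priori vanish there; the paper deduces its non-vanishing from the non-vanishing of $\Theta_{\phi}^{\bfH}$ near the identity (where positivity of $\dim\pi^{H_{x,r+}}$ is available) together with the injectivity of the pullback via the transfer.
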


\begin{proof}
By Theorem \ref{thm:GV} (\cite[Corollary 10.6.4]{MR3709003}), our task is to show the inequality
\[
\max\bigl\{\depth(\pi) \,\big\vert\, \pi \in \Pi_{\phi}^{\bfH}\bigr\}
\geq
\depth(\phi).
\]
In order to show this inequality, it is enough to show
\[
r
\geq
\depth(\phi)
\]
for every $r\in\R_{>0}$ which is strictly greater than $\max\bigl\{\depth(\pi) \,\big\vert\, \pi \in \Pi_{\phi}^{\bfH}\bigr\}$.
For this, we consider the converse direction of the arguments of Ganapathy-Varma in \cite[Corollary 10.6.4]{MR3709003}.

By Lemma \ref{lem:radii}, the twisted character $\Theta_{\phi,\theta}^{\G}$ of the endoscopic lift of $\Pi_{\phi}^{\bfH}$ has a local character expansion on $\mathcal{U}_{r}$:
\[
\Theta_{\phi,\theta}^{\G}\circ(\mfc^{-1})^{\ast}
=
\sum_{\mcO\in\Nil(\mfg_{\theta})}
c_{\mcO}\cdot \widehat{\mu_{\mcO}}.
\tag{$\ast$}
\]
We take a maximal (in the sense of the closure relation) nilpotent orbit $\mcO_{\star}$ of $\mfg_{\theta}$ satisfying $c_{\mcO_{\star}}\neq0$.
Here we note that there exists a nilpotent orbit $\mathcal{O}$ whose $c_{\mcO}$ is not zero (namely, the distribution in $(\ast)$ is not identically zero on $\mathcal{I}(\mathcal{U}_{r})$).
Indeed, if $(\ast)$ is identically zero on $\mathcal{I}(\mathcal{U}_{r})$, then also 
$\Theta_{\phi}^{\bfH}\circ(\mfc'^{-1})^{\ast}$ is zero on $\mathcal{SI}(\mfh_{r})$ by the injectivity of the pullback via the transfer, which is proved in the proof of \cite[Lemma 10.5.4]{MR3709003}.
However, it contradicts to the nonzeroness of $\Theta_{\phi}^{\bfH}\circ(\mfc'^{-1})^{\ast}$ in a neighborhood of the origin, which is proved in the second paragraph in the proof of \cite[Corollary 10.6.4]{MR3709003}.

By the same argument of the second paragraph of the proof of \cite[Corollary 10.6.4]{MR3709003} (namely, as a consequence of DeBacker's parametrizing result of nilpotent orbits via Bruhat-Tits theory, see \cite[Theorem 5.6.1]{MR1935848} and also \cite[Section 2.5]{MR1914003}), for this nilpotent orbit $\mcO_{\star}$, we can take a point $x\in\mathcal{B}(\G_{\theta},F)$ and an element $X\in\mfg_{\theta,x,-r}$ satisfying the following conditions:
\begin{itemize}
\item we have $X\in\mcO_{\star}$, and
\item if a nilpotent orbit $\mcO$ meets $X+\mfg_{\theta,x,-r+}$, then we have $\mcO_{\star}\subset\overline{\mcO}$.
\end{itemize}
Here we remark that, in order to use DeBacker's parametrization, we have to put some assumptions on the residual characteristic.
However, it is the same as the assumption used in Theorem \ref{thm:t-hom}.
Hence we do not have to add further assumptions on the residual characteristic.

As in the third paragraph of the proof of \cite[Corollary 10.6.4]{MR3709003}, we define a homomorphism $\chi_{X}$ from $\mfg_{\theta,x,r}$ to $\C^{\times}$ to be
\[
Y \mapsto \psi_{F}\bigl(\tr(-XY)\bigr),
\]
where $\psi_{F}$ is a nontrivial additive character of $F$ of level zero.
Then, since $X$ belongs to $\mfg_{\theta,x,-r}$, this homomorphism $\chi_{X}$ is $\mfg_{\theta,x,r+}$-invariant.
Hence, by composing the inverse of the Cayley transform isomorphism $\mfg_{\theta,x,r:r+}\cong G_{\theta,x,r:r+}$ (Proposition \ref{prop:Cayleyhomeo}) and considering the zero extension, we can regard $\chi_{X}\circ\mfc^{-1}$ as an element of $C_{c}^{\infty}(G_{\theta,x,r})$ which is bi-$G_{\theta,x,r+}$-invariant.

By Corollary \ref{cor:MPdesc}, there exists a bi-$G_{x,r+}$-invariant test function $f$ of $C_{c}^{\infty}(G_{x,r})$ such that $\chi_{X}\circ\mfc^{-1}$ is a semisimple descent of $f$.
Since $G_{x,r+}$ is $\theta$-stable and $f$ is bi-$G_{x,r+}$-invariant, we have
\[
\Theta_{\phi,\theta}^{\G}(f)
=
\tr\Bigl(\pi_{\phi}^{\G}(f)\circ I_{\theta} \,\Big\vert\, (\pi_{\phi}^{\G})^{G_{x,r+}} \Bigr)
\]
by the definition of the $\theta$-twisted character distribution.
Here, $I_{\theta}$ is an intertwiner $I_{\theta}\colon\pi_{\phi}^{\G}\cong(\pi_{\phi}^{\G})^{\theta}$ normalized by using the fixed $\theta$-stable Whittaker data of $\G$.
Thus, if we can show the non-vanishing of $\Theta_{\phi,\theta}^{\G}(f)$, then the depth of $\pi_{\phi}^{\G}$ (hence the depth of $\phi$, by Theorem \ref{thm:GL}) is bounded by $r$ and the proof is completed.

By the local character expansion $(\ast)$, we have
\[
\Theta_{\phi,\theta}^{\G}(f)
=
\sum_{\mcO\in\Nil(\mfg_{\theta})}
c_{\mcO}\cdot \widehat{\mu_{\mcO}}(\chi_{X}).
\]
By noting that the Fourier transform of $\chi_{X}$ on $\mfg_{\theta}$ with respect to the non-degenerate bilinear form 
\[
\mfg_{\theta} \times \mfg_{\theta} \rightarrow \C^{\times};\quad (Y_{1},Y_{2}) \mapsto \psi_{F}\bigl(\tr(Y_{1}Y_{2})\bigr)
\]
is given by $\vol(\mfg_{\theta,x,r})\cdot\mathbbm{1}_{X+\mfg_{\theta,x,-r+}}$, we have
\begin{align*}
\sum_{\mcO\in\Nil(\mfg_{\theta})}
c_{\mcO}\cdot \widehat{\mu_{\mcO}}(\chi_{X})
&=
\sum_{\mcO\in\Nil(\mfg_{\theta})}
c_{\mcO}\cdot \mu_{\mcO}(\widehat{\chi_{X}})\\
&=
\vol(\mfg_{\theta,x,r})
\sum_{\mcO\in\Nil(\mfg_{\theta})}
c_{\mcO}\cdot \mu_{\mcO}(\mathbbm{1}_{X+\mfg_{\theta,x,-r+}}).
\end{align*}
Furthermore, by the properties of $\mcO_{\star}$ and $X$, we can compute the right-hand side as follows:
\begin{align*}
\vol(\mfg_{\theta,x,r})
\sum_{\mcO\in\Nil(\mfg_{\theta})}
c_{\mcO}\cdot \mu_{\mcO}(\mathbbm{1}_{X+\mfg_{\theta,x,-r+}})
&=
\vol(\mfg_{\theta,x,r})
\sum_{\begin{subarray}{c}\mcO\in\Nil(\mfg_{\theta})\\\mcO_{\star}\subset\overline{\mcO} \end{subarray}}
c_{\mcO}\cdot \mu_{\mcO}(\mathbbm{1}_{X+\mfg_{\theta,x,-r+}})\\
&=
\vol(\mfg_{\theta,x,r})c_{\mcO_{\star}}\cdot \mu_{\mcO_{\star}}(\mathbbm{1}_{X+\mfg_{\theta,x,-r+}}).
\end{align*}
In particular, this is not equal to zero.
This completes the proof.
\end{proof}

We finally comment on the depth of nontempered $L$-packets.
In Theorem \ref{thm:Arthur}, the local Langlands correspondence is stated only for tempered $L$-packets, and we do not have the endoscopic character relation for nontempered $L$-packets.
However, by using the theory of \textit{Langlands classification}, we can extend the local Langlands correspondence for tempered representations (Theorem \ref{thm:Arthur}) to nontempered representations as follows.

Let $\phi$ be an $L$-parameter of $\bfH$.
If we regard $\phi$ as a representation of $W_{F}\times\SL_{2}(\C)$ by composing it with the $L$-embedding from ${}^{L}\bfH$ to ${}^{L}\G$, we can decompose $\phi$ into a direct sum of representations:
\[
\phi
=
\bigoplus_{i=1}^{k}(\phi_{i}\otimes|\cdot|_{F}^{r_{i}})
\oplus\phi_{0}
\oplus\bigoplus_{i=1}^{k}(\phi_{i}\otimes|\cdot|_{F}^{r_{i}})^{\vee}.
\]
Here,
\begin{itemize}
\item
$\phi_{i}$ is a tempered $L$-parameter of $\GL_{N_{i}}$, 
\item
$\phi_{0}$ is a tempered $L$-parameter of a smaller quasi-split classical group $\bfH_{0}$ of the same type as $\bfH$ (if we put the size of $\widehat{\bfH_{0}}$ to be $N_{0}$, then we have $N=N_{0}+\sum_{i=1}^{k}2N_{i}$), 
\item
$r_{i}$'s are real numbers satisfying $r_{1}>\cdots>r_{k}\geq0$, and
\item
$|\cdot|_{F}$ is the character of the Weil group $W_{F}$ corresponding to the absolute value of $F^{\times}$ under the local class field theory for $F$.
\end{itemize}
In this situation, we can regard $\GL_{N_{1}}\times\cdots\times\GL_{N_{k}}\times\bfH_{0}$ as a Levi subgroup of $\bfH$.
Let $\mathbf{P}$ be a parabolic subgroup of $\bfH$ having $\GL_{N_{1}}\times\cdots\times\GL_{N_{k}}\times\bfH_{0}$ as its Levi subgroup.
Then, by the theory of Langlands classification, for
\begin{itemize}
\item
irreducible tempered representations $\pi_{i}$ of $\GL_{N_{i}}(F)$ corresponding to $\phi_{i}$ under the local Langlands correspondence for $\GL_{N_{i}}$, and
\item
every member $\pi_{0}$ of the tempered $L$-packet $\Pi_{\phi_{0}}^{\bfH_{0}}$ of $\bfH_{0}$ for $\phi_{0}$,
\end{itemize}
the normalized parabolic induction
\[
\nInd_{P}^{H} \bigl((\pi_{1}\otimes|\det(\cdot)|^{r_{1}})\boxtimes\cdots\boxtimes(\pi_{k}\otimes|\det(\cdot)|^{r_{k}})\boxtimes\pi_{0}\bigr)
\]
has a unique irreducible quotient.
We define the $L$-packet $\Pi_{\phi}^{\bfH}$ for $\phi$ to be the set of irreducible smooth representations of $H$ obtained as such irreducible quotients.

Now we recall the following compatibility of the parabolic induction and the depth of representations:
\begin{prop}[{\cite[Theorem 5.2]{MR1371680}}]\label{prop:parab}
Let $\J$ be a connected reductive group over $F$ and $\mathbf{P}_{\J}$ a parabolic subgroup of $\J$ with a Levi decomposition $\mathbf{P}_{\J}=\mathbf{M}_{\J}\mathbf{N}_{\J}$.
Let $\rho$ be an irreducible smooth representation of $M_{J}$ and $\pi$ an irreducible subquotient of $\nInd_{P_{J}}^{J}\rho$.
Then we have $\depth(\pi)=\depth(\rho)$.
\end{prop}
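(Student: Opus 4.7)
The plan is to establish the two inequalities $\depth(\pi) \leq \depth(\rho)$ and $\depth(\pi) \geq \depth(\rho)$ separately, via the compatibility of Moy--Prasad filtrations with the Iwahori-type decomposition attached to $\mathbf{P}_{\J} = \mathbf{M}_{\J}\mathbf{N}_{\J}$. Fix a maximal $F$-split torus $\mathbf{S}$ of $\J$ contained in $\mathbf{M}_{\J}$; then the apartment $\mathcal{A}(\mathbf{S}, F)$ embeds simultaneously in $\B(\J, F)$ and in $\B(\mathbf{M}_{\J}, F)$, and for every point $y$ in it and every $r > 0$ the parahoric filtration admits the decomposition
\[
J_{y, r+} = \bar{N}_{J, y, r+} \cdot M_{J, y, r+} \cdot N_{J, y, r+},
\]
where $\bar{\mathbf{N}}_{\J}$ is the unipotent radical opposite to $\mathbf{N}_{\J}$. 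This structural identity is the backbone of the entire argument.

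For $\depth(\pi) \leq \depth(\rho)$, set $r := \depth(\rho)$; by Proposition \ref{prop:depth} applied to $\mathbf{M}_{\J}$ and after an $M_{J}$-conjugation, fix $y \in \mathcal{A}(\mathbf{S}, F)$ and a nonzero vector $v \in \rho^{M_{J, y, r+}}$. I would then produce an element of $(\nInd_{P_{J}}^{J} \rho)^{J_{y, r+}}$ as a smooth section supported on the open Bruhat cell $P_{J} \cdot \bar{N}_{J, y, r+}$ and taking value $v$ at the identity; the decomposition above forces this section to be $J_{y, r+}$-invariant, showing $\depth(\nInd_{P_{J}}^{J} \rho) \leq r$. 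To transfer the bound to the irreducible subquotient $\pi$, I would invoke Moy--Prasad's theory of unrefined minimal $K$-types: the induced representation admits a minimal $K$-type of depth $\leq r$, and compatibility of minimal $K$-types with Jordan--H\"older composition ensures that every irreducible subquotient inherits such a $K$-type, and so has depth $\leq r$.

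For the reverse inequality, set $s := \depth(\pi)$. By Frobenius reciprocity and second adjointness, the fact that $\pi$ is an irreducible subquotient of $\nInd_{P_{J}}^{J} \rho$ forces $\rho$ to appear as a subquotient of the normalized Jacquet module $\pi_{N_{\J}}$, up to the usual modular twist. The key compatibility is that, for $y \in \mathcal{A}(\mathbf{S}, F)$, the Jacquet projection $\pi \twoheadrightarrow \pi_{N_{\J}}$ maps $\pi^{J_{y, s+}}$ onto $(\pi_{N_{\J}})^{M_{J, y, s+}}$: surjectivity is obtained by averaging a lift over the pro-$p$ group $N_{J, y, s+}$, the Iwahori decomposition once more ensuring $J_{y, s+}$-invariance of the averaged element. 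Consequently $\pi_{N_{\J}}$ has depth $\leq s$ as an $M_{J}$-representation, and the minimal-$K$-type argument applied on $\mathbf{M}_{\J}$ yields $\depth(\rho) \leq s$.

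The main obstacle I expect is the combinatorial bookkeeping needed to drag the depth-witnessing points into a common Levi-compatible apartment, together with the invocation of Moy--Prasad's minimal $K$-type machinery to pass between a representation and its irreducible subquotients; without the latter, the upper bound argument only controls $\nInd_{P_{J}}^{J} \rho$ rather than each Jordan--H\"older factor. Once these two inputs are in place, the Iwahori decomposition carries the remaining arithmetic symmetrically in both directions.
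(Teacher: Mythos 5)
The paper does not supply a proof; it cites Moy--Prasad \cite[Theorem 5.2]{MR1371680} verbatim. Your sketch attempts to reconstruct that argument and gets several structural ingredients right: the Iwahori factorization $J_{y,r+}=\bar N_{J,y,r+}\,M_{J,y,r+}\,N_{J,y,r+}$ for $y$ in a Levi-compatible apartment, the averaging argument showing that the Jacquet projection carries $\pi^{J_{y,s+}}$ onto $(\pi_{N_{\J}})^{M_{J,y,s+}}$, and the use of Bernstein--Casselman reciprocity to place a Weyl conjugate of $\rho$ inside $\pi_{N_{\J}}$.

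There is, however, a genuine gap at the step where you pass from the full induced module, or from the full Jacquet module, to a prescribed irreducible subquotient. Exhibiting a nonzero $J_{y,r+}$-invariant section of $\nInd_{P_{J}}^{J}\rho$ does not control the depth of an arbitrary irreducible subquotient $\pi$: a reducible smooth representation can have Jordan--H\"older factors of strictly larger depth than any one invariant vector witnesses (take a direct sum of two irreducibles of different depths). Symmetrically, $(\pi_{N_{\J}})^{M_{J,y,s+}}\neq 0$ does not imply that the particular constituent $\rho$ of $\pi_{N_{\J}}$ carries such a fixed vector. You propose to bridge both gaps with a ``compatibility of minimal $K$-types with Jordan--H\"older composition,'' but no such general compatibility exists; the fact that it holds in the context of normalized parabolic induction and Jacquet restriction is precisely the content of Moy--Prasad's Theorem 5.2, established there through a nontrivial analysis of unrefined minimal $K$-types and associate classes that your sketch does not reproduce. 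As written, the invocation is circular: it assumes the theorem you set out to prove.
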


By using this, we can extend Theorem \ref{thm:main} to nontempered representations:
\begin{thm}\label{thm:main-non-temp}
We assume Hypothesis $\ref{hyp}$.
Let $\phi$ be an $L$-parameter of $\bfH$, and $\Pi_{\phi}^{\bfH}$ the $L$-packet of $\bfH$ for $\phi$.
Then we have
\[
\max\bigl\{\depth(\pi) \,\big\vert\, \pi \in \Pi_{\phi}^{\bfH}\bigr\}
=
\depth(\phi).
\]
\end{thm}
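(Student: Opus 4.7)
The plan is to reduce Theorem \ref{thm:main-non-temp} to the tempered case (Theorem \ref{thm:main}) via the Langlands-classification description of nontempered $L$-packets recalled just above the statement. Given an $L$-parameter $\phi$ of $\bfH$ decomposed as
\[
\iota\circ\phi
=
\bigoplus_{i=1}^{k}(\phi_{i}\otimes|\cdot|_{F}^{r_{i}})
\oplus\phi_{0}
\oplus\bigoplus_{i=1}^{k}(\phi_{i}\otimes|\cdot|_{F}^{r_{i}})^{\vee},
\]
every $\pi\in\Pi_{\phi}^{\bfH}$ is by construction the Langlands quotient of $\nInd_{P}^{H}\rho$ with $\rho=(\pi_{1}\otimes|\det(\cdot)|^{r_{1}})\boxtimes\cdots\boxtimes(\pi_{k}\otimes|\det(\cdot)|^{r_{k}})\boxtimes\pi_{0}$, where $\pi_{i}$ is the irreducible tempered representation of $\GL_{N_{i}}(F)$ corresponding to $\phi_{i}$ and $\pi_{0}$ ranges over the tempered $L$-packet $\Pi_{\phi_{0}}^{\bfH_{0}}$.

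First I would handle the Galois side. Since $|\cdot|_{F}$ is unramified, it is trivial on every $I_{F}^{r+}$, so twisting by $|\cdot|_{F}^{r_{i}}$ or taking contragredients does not alter the level at which the restriction to the wild ramification filtration becomes trivial; hence $\depth(\phi_{i}\otimes|\cdot|_{F}^{r_{i}})=\depth((\phi_{i}\otimes|\cdot|_{F}^{r_{i}})^{\vee})=\depth(\phi_{i})$. Using that $\iota$ is an $L$-embedding (hence does not change the depth) and that the depth of a direct sum of Galois representations is the maximum of the depths of its summands, we obtain
\[
\depth(\phi)=\max\bigl\{\depth(\phi_{1}),\ldots,\depth(\phi_{k}),\depth(\phi_{0})\bigr\}.
\]

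Second I would handle the representation side. By Proposition \ref{prop:parab}, $\depth(\pi)=\depth(\rho)$. Since the Bruhat--Tits building and the Moy--Prasad filtrations of a direct product of reductive groups factor as products, the depth of an external tensor product of irreducible smooth representations is the maximum of the depths of its factors, so
\[
\depth(\rho)=\max\bigl\{\depth(\pi_{1}\otimes|\det(\cdot)|^{r_{1}}),\ldots,\depth(\pi_{k}\otimes|\det(\cdot)|^{r_{k}}),\depth(\pi_{0})\bigr\}.
\]
Twisting by the unramified character $|\det(\cdot)|^{r_{i}}$ does not change depth, so each term above equals $\depth(\pi_{i})$, which by Theorem \ref{thm:GL} equals $\depth(\phi_{i})$. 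Taking the maximum over $\pi\in\Pi_{\phi}^{\bfH}$ only affects the last slot, and Theorem \ref{thm:main} applied to the tempered parameter $\phi_{0}$ of $\bfH_{0}$ (which satisfies the same Hypothesis \ref{hyp}, as $\bfH_{0}$ is a smaller classical group of the same type) gives $\max_{\pi_{0}\in\Pi_{\phi_{0}}^{\bfH_{0}}}\depth(\pi_{0})=\depth(\phi_{0})$. Combining these equalities yields $\max\{\depth(\pi)\mid\pi\in\Pi_{\phi}^{\bfH}\}=\depth(\phi)$.

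There is no real obstacle here: the argument is essentially bookkeeping that packages together Langlands classification, the elementary behavior of depth under unramified twist, duality, direct sum, and external tensor product, the known depth-preservation for $\GL_{N}$ (Theorem \ref{thm:GL}), and the tempered case of our own theorem (Theorem \ref{thm:main}). The only point where one has to pay minor attention is verifying that the smaller group $\bfH_{0}$ appearing in the Langlands decomposition still satisfies Hypothesis \ref{hyp}, which is immediate since $\bfH_{0}$ is of the same type as $\bfH$ but of smaller rank, so the hypothesis on $p$ for $\bfH$ is stronger than that for $\bfH_{0}$.
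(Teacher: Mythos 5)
Your proof is correct and follows essentially the same route as the paper: decompose $\phi$ via the Langlands classification, apply Proposition~\ref{prop:parab} together with the behavior of depth under external tensor product and unramified twist, and then invoke Theorem~\ref{thm:GL} on the $\GL_{N_i}$ factors and Theorem~\ref{thm:main} on $\bfH_0$. The only difference is that you spell out the Galois-side computation $\depth(\phi)=\max\{\depth(\phi_i),\depth(\phi_0)\}$ and the verification that $\bfH_0$ inherits Hypothesis~\ref{hyp}, both of which the paper leaves implicit.
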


\begin{proof}
We consider the direct sum decomposition of $\phi$ using tempered $L$-parameters:
\[
\phi
=
\bigoplus_{i=1}^{k}(\phi_{i}\otimes|\cdot|_{F}^{r_{i}})
\oplus\phi_{0}
\oplus\bigoplus_{i=1}^{k}(\phi_{i}\otimes|\cdot|_{F}^{r_{i}})^{\vee}.
\]
Then, by Proposition \ref{prop:parab} and the construction of nontempered $L$-packets, for every member $\pi$ of $\Pi_{\phi}^{\bfH}$, we have
\begin{align*}
\depth(\pi)
&=
\depth\bigl((\pi_{1}\otimes|\det(\cdot)|^{r_{1}})\boxtimes\cdots\boxtimes(\pi_{k}\otimes|\det(\cdot)|^{r_{k}})\boxtimes\pi_{0}\bigr)\\
&=
\max\{\depth(\pi_{1}),\ldots,\depth(\pi_{k}),\depth(\pi_{0})\}
\end{align*}
(note that the determinant twist does not change the depth).
Hence we have
\[
\max\bigl\{\depth(\pi) \,\big\vert\, \pi \in \Pi_{\phi}^{\bfH}\bigr\}
=
\max\bigl\{\depth(\pi_{1}),\ldots,\depth(\pi_{k}),\depth(\pi_{0}) \,\big\vert\, \pi_{0} \in \Pi_{\phi_{0}}^{\bfH_{0}}\bigr\}.
\]
Therefore, by using Theorems \ref{thm:GL} and \ref{thm:main}, we get
\[
\max\bigl\{\depth(\pi) \,\big\vert\, \pi \in \Pi_{\phi}^{\bfH}\bigr\}
=
\max\{\depth(\phi_{1}),\ldots,\depth(\phi_{k}),\depth(\phi_{0})\}.
\]
The right-hand side equals $\depth(\phi)$.
\end{proof}

\section{Evaluation of the minimum of depth in an $L$-packet for unitary groups}\label{sec:min-unitary}
In this section, we consider the case where $\G:=\mathrm{Res}_{E/F}\GL_{N}$ for a quadratic extension $E/F$, and $\bfH$ is the quasi-split unitary group with respect to $E/F$ in $N$ variables.
Recall that, for these groups, we have $\G_{\theta}=\bfH$.

By combining Corollary \ref{cor:MPdesc} with Proposition \ref{prop:ss-tran}, we get the following:
\begin{thm}\label{thm:MPFL}
Let $x\in\mathcal{B}(\bfH,F)=\mathcal{B}(\G_{\theta},F)$ and $r\in\R_{>0}$.
Then $\vol(H_{x,r})^{-1}\mathbbm{1}_{H_{x,r}}\in C_{c}^{\infty}(H)$ is a transfer of $\vol(G_{x,r})^{-1}\mathbbm{1}_{G_{x,r}\rtimes\theta}\in C_{c}^{\infty}(\w{G})$.
\end{thm}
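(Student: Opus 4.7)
The strategy is to combine Corollary \ref{cor:MPdesc} with Proposition \ref{prop:ss-tran}. The first step will be to sum the coset-level semisimple descent from Corollary \ref{cor:MPdesc} into a global identity on all of $G_{x,r}\rtimes\theta$, and the second step will be to feed this into Proposition \ref{prop:ss-tran}, which converts semisimple descents into transfers.

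For the first step, fix $x\in\mathcal{B}(\bfH,F)$ and $r>0$, and sum the identity of Corollary \ref{cor:MPdesc} over all cosets $[h]\in G_{\theta,x,r:r+}=H_{x,r:r+}$. Since by Proposition \ref{prop:GV1} together with the disjointness argument already carried out in the proof of Corollary \ref{cor:coset}, every coset $[g]\in G_{x,r:r+}$ appears exactly once as $[g]_{G}\rtimes\theta$ in the double union over $[h]$ and $[g]\sim_\theta[h]$, the summed identity reads: $f_\theta:=\vol(H_{x,r})^{-1}\mathbbm{1}_{H_{x,r}}$ is a semisimple descent of $f:=\vol(G_{x,r})^{-1}\mathbbm{1}_{G_{x,r}\rtimes\theta}$. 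Here I have also used that $G_{\theta,x,r}=H_{x,r}$ in the unitary case, since $\G_\theta=\bfH$.

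For the second step, I apply Proposition \ref{prop:ss-tran} with the candidate transfer $f^H:=f_\theta$. The proposition reduces the assertion to checking that $f_\theta\circ\mfc$ and $f^H\circ\mfc'$ have matching orbital integrals on the common Lie algebra $\mfg_\theta=\mfh$. By Proposition \ref{prop:Cayleyhomeo}, $\mfc$ restricts to a homeomorphism $\mfh_{x,r}\xrightarrow{\cong}H_{x,r}$, so $f_\theta\circ\mfc=\vol(H_{x,r})^{-1}\mathbbm{1}_{\mfh_{x,r}}$. By Definition \ref{defn:Cayley-classic} the twisted Cayley transform factors as $\mfc'(X)=\mfc(X)^2$, and the main (and really only) technical obstacle will be to show that $\mfc'^{-1}(H_{x,r})=\mfh_{x,r}$, equivalently that the squaring map on $H_{x,r}$ is bijective. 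This is standard: for $r>0$ the filtration $H_{x,r}$ is a pro-$p$ group and $p\neq 2$, so squaring is a homeomorphism of $H_{x,r}$; one can also verify injectivity on the larger set $H_{\tu}$ via Jordan decomposition, using that a topologically unipotent element has all eigenvalues in $1+\mfp_{\bar{F}}$, which admits a unique square root in that set when $p\neq 2$. Once this is established, $f^H\circ\mfc'=f_\theta\circ\mfc=\vol(H_{x,r})^{-1}\mathbbm{1}_{\mfh_{x,r}}$ as functions on $\mfh$, so the matching orbital integrals condition is trivial, and Proposition \ref{prop:ss-tran} yields the theorem.
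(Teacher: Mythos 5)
Your proposal is correct and follows essentially the same route as the paper: sum Corollary \ref{cor:MPdesc} over cosets to produce the semisimple-descent relation (the paper states this tersely but the summing argument you spell out, including the well-definedness from the disjointness in Corollary \ref{cor:coset}, is exactly what is implicit), and then apply Proposition \ref{prop:ss-tran} and observe that both $\mathbbm{1}_{H_{x,r}}\circ\mfc$ and $\mathbbm{1}_{H_{x,r}}\circ\mfc'$ equal $\mathbbm{1}_{\mfh_{x,r}}$ because squaring is a bijection on $H_{x,r}$. The paper cites Lemma 3.2.7 of Ganapathy--Varma for the squaring bijection rather than invoking the pro-$p$ structure of $H_{x,r}$ directly as you do, but these are two equally standard justifications for the same fact.
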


\begin{proof}
By Corollary \ref{cor:MPdesc}, $\vol(H_{x,r})^{-1}\mathbbm{1}_{H_{x,r}}\in C_{c}^{\infty}(H)$ is a semisimple descent of $\vol(G_{x,r})^{-1}\mathbbm{1}_{G_{x,r}\rtimes\theta}\in C_{c}^{\infty}(\w{G})$.
Thus, by Proposition \ref{prop:ss-tran}, in order to show the assertion, it is enough to check that $\vol(H_{x,r})^{-1}\mathbbm{1}_{H_{x,r}}\circ\mfc$ and $\vol(H_{x,r})^{-1}\mathbbm{1}_{H_{x,r}}\circ\mfc'$ have matching orbital integrals.
However, since we assume that the residual characteristic $p$ is not equal to $2$, the map $x\mapsto x^{2}$ from $H_{\tu}$ to itself is bijective and induces a bijection on $H_{x,r}$ (see, e.g., \cite[Lemma 3.2.7]{MR3709003} for a proof).
Namely, we have 
\[
\vol(H_{x,r})^{-1}\mathbbm{1}_{H_{x,r}}\circ\mfc
=\vol(H_{x,r})^{-1}\mathbbm{1}_{H_{x,r}}\circ\mfc'
=\vol(H_{x,r})^{-1}\mathbbm{1}_{\mfh_{x,r}}.
\]
In particular, they have matching orbital integrals.
\end{proof}

\begin{prop}\label{prop:main2}
Let $\phi$ be a tempered $L$-parameter of $\bfH$, and $\Pi_{\phi}^{\bfH}$ the $L$-packet of $\bfH$ for $\phi$.
Then we have
\[
\min\bigl\{\depth(\pi) \,\big\vert\, \pi \in \Pi_{\phi}^{\bfH}\bigr\}
\geq
\depth(\phi).
\]
\end{prop}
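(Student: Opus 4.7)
The plan is to combine the Moy--Prasad ``fundamental lemma'' of Theorem \ref{thm:MPFL} with the endoscopic character relation of Theorem \ref{thm:Arthur}, using the depth preserving property for $\GL_{N}$ (Theorem \ref{thm:GL}) as the bridge from $\pi_{\phi}^{\G}$ back to $\phi$. Concretely, let $r_{0} := \min\{\depth(\pi)\mid \pi\in\Pi_{\phi}^{\bfH}\}$ and choose $\pi_{0}\in\Pi_{\phi}^{\bfH}$ attaining this minimum. By Proposition \ref{prop:depth}, pick a point $x_{0}\in\mathcal{B}(\bfH,F)$ with $\pi_{0}^{H_{x_{0},r_{0}+}}\neq 0$. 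Choose $\varepsilon>0$ small enough that neither of the Moy--Prasad filtrations $\{H_{x_{0},s}\}$ and $\{G_{x_{0},s}\}$ jumps on $(r_{0},r_{0}+\varepsilon]$, and set $r:=r_{0}+\varepsilon$. Theorem \ref{thm:MPFL}, applied at this $r$, produces a transfer
\[
f^{H} := \vol(H_{x_{0},r_{0}+})^{-1}\mathbbm{1}_{H_{x_{0},r_{0}+}}
\qquad\text{of}\qquad
f := \vol(G_{x_{0},r_{0}+})^{-1}\mathbbm{1}_{G_{x_{0},r_{0}+}\rtimes\theta}.
\]

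Next I feed $f$ and $f^{H}$ into the endoscopic character relation of Theorem \ref{thm:Arthur}:
\[
\Theta_{\phi,\theta}^{\G}(f)
=
\sum_{\pi\in\Pi_{\phi}^{\bfH}}\Theta_{\pi}(f^{H}).
\]
Using the standard identity $\Theta_{\pi}(\vol(K)^{-1}\mathbbm{1}_{K})=\dim\pi^{K}$ for a compact open subgroup $K\subset H$, the right-hand side equals $\sum_{\pi\in\Pi_{\phi}^{\bfH}}\dim\pi^{H_{x_{0},r_{0}+}}$, a nonnegative integer whose $\pi_{0}$-contribution is strictly positive by the choice of $x_{0}$. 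Hence $\Theta_{\phi,\theta}^{\G}(f)\neq 0$. On the $\G$-side, the subgroup $K_{0}:=G_{x_{0},r_{0}+}$ is $\theta$-stable, so $\pi_{\phi}^{\G}(\vol(K_{0})^{-1}\mathbbm{1}_{K_{0}})$ is the projector onto the finite-dimensional subspace $(\pi_{\phi}^{\G})^{K_{0}}$ and the twisted character evaluates to
\[
\Theta_{\phi,\theta}^{\G}(f)
=
\tr\bigl(I_{\theta}\,\big|\,(\pi_{\phi}^{\G})^{K_{0}}\bigr),
\]
where $I_{\theta}$ is the intertwiner normalized by the fixed $\theta$-stable Whittaker data. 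The non-vanishing of this trace forces $(\pi_{\phi}^{\G})^{G_{x_{0},r_{0}+}}\neq 0$, i.e., $\depth(\pi_{\phi}^{\G})\leq r_{0}$. Since composition with the $L$-embedding $\iota$ preserves the depth of an $L$-parameter, and since the local Langlands correspondence for $\GL_{N}$ is depth preserving (Theorem \ref{thm:GL}), we conclude
\[
\depth(\phi)=\depth(\iota\circ\phi)=\depth(\pi_{\phi}^{\G})\leq r_{0},
\]
which is the desired inequality.

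The essential content of the argument has already been carried out in Theorem \ref{thm:MPFL}; once that Moy--Prasad fundamental lemma is in hand, the present proposition is essentially a formal consequence. The one bookkeeping point worth flagging is that the argument only uses the trivial implication ``nonzero twisted trace on $V$ implies $V\neq 0$'' rather than any positivity statement for a twisted trace, so no delicate sign issue arises on the side of $\w{G}$; the positivity lives entirely on the $\bfH$-side, as a sum of dimensions of invariant subspaces.
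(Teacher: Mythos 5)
Your proof is correct and follows essentially the same route as the paper's: apply the Moy--Prasad fundamental lemma (Theorem \ref{thm:MPFL}) together with the endoscopic character relation (Theorem \ref{thm:Arthur}), observe that on the $\bfH$-side the character of the $L$-packet at the relevant test function is a strictly positive sum of dimensions, deduce non-vanishing of the twisted trace and hence $(\pi_{\phi}^{\G})^{G_{x,r+}}\neq 0$, then invoke depth-preservation for $\GL_{N}$ (Theorem \ref{thm:GL}). Your explicit $\varepsilon$-step to reconcile the $r$-filtration appearing in Theorem \ref{thm:MPFL} with the $r+$-filtration needed for the positivity argument is a small clarification that the paper's proof tacitly elides.
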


\begin{proof}
Let $\pi$ be a member of $\Pi_{\phi}^{\bfH}$ having the minimal depth in $\Pi_{\phi}^{\bfH}$, and we put $r$ to be the depth of $\pi$.
Then, for a point $x\in\B(\bfH,F)$, $\pi$ has a non-zero $H_{x,r+}$-fixed vector by Proposition \ref{prop:depth}.
Thus we have
\[
\Theta_{\pi}(\mathbbm{1}_{H_{x,r+}})
=
\tr\Bigl(\pi(\mathbbm{1}_{H_{x,r+}})\,\Big\vert\, \pi^{H_{x,r+}} \Bigr)
=\dim(\pi^{H_{x,r+}})>0.
\]
Moreover, for other member $\pi'$ of $\Pi_{\phi}^{\bfH}$, we have
\[
\Theta_{\pi'}(\mathbbm{1}_{H_{x,r+}})
=\dim({\pi'}^{H_{x,r+}})\geq0.
\]
Therefore we can conclude that $\Theta_{\phi}^{\bfH}(\mathbbm{1}_{H_{x,r+}})$ is not zero.

On the other hand, by Theorem \ref{thm:MPFL}, $\vol(H_{x,r})^{-1}\mathbbm{1}_{H_{x,r}}\in C_{c}^{\infty}(H)$ is a transfer of $\vol(G_{x,r})^{-1}\mathbbm{1}_{G_{x,r}\rtimes\theta}\in C_{c}^{\infty}(\w{G})$.
Therefore, by using the endoscopic character relation (Theorem \ref{thm:Arthur}) for $\pi_{\phi}^{\G}$, which is the endoscopic lift of $\Pi_{\phi}^{\bfH}$, and $\Pi_{\phi}^{\bfH}$ to these functions, we get
\[
\vol(G_{x,r})^{-1}\cdot\Theta_{\phi,\theta}^{\G}(\mathbbm{1}_{G_{x,r+}\rtimes\theta})
=
\vol(H_{x,r})^{-1}\cdot\Theta_{\phi}^{\bfH}(\mathbbm{1}_{H_{x,r+}})
\neq0.
\]
Since we have 
\[
\Theta_{\phi,\theta}^{\G}(\mathbbm{1}_{G_{x,r+}\rtimes\theta})
=
\tr\Bigl(\pi_{\phi}^{\G}(\mathbbm{1}_{G_{x,r+}})\circ I_{\theta} \,\Big\vert\, (\pi_{\phi}^{\G})^{G_{x,r+}} \Bigr),
\]
where $I_{\theta}$ is an isomorphism $I_{\theta}\colon\pi_{\phi}^{\G}\cong(\pi_{\phi}^{\G})^{\theta}$ normalized by the fixed Whittaker data, in particular $(\pi_{\phi}^{\G})^{G_{x,r+}}$ is not zero.
Therefore the depth of $\pi_{\phi}^{\G}$ is not greater than $r$.
As we have $\depth(\pi_{\phi}^{\G})=\phi$ by Theorem \ref{thm:GL}, we get the assertion.
\end{proof}

\begin{rem}
In the above proof, we only have to assume that $p$ is not equal to $2$.
\end{rem}

\begin{rem}
We cannot deduce the converse inequality by swapping the roles of $\G$ and $\bfH$, since, a priori, it is not clear whether we can take a point $x$ of $\mathcal{B}(\G_{\theta})$ satisfying $(\pi_{\phi}^{\G})^{G_{x,r+}}\neq0$.
Furthermore, even if such a point $x$ exists, the non-vanishing of $(\pi_{\phi}^{\G})^{G_{x,r+}}$ does not necessarily imply the non-vanishing of $\Theta_{\phi,\theta}^{\G}(\mathbbm{1}_{G_{x,r+}\rtimes\theta})$.
\end{rem}

\begin{thm}\label{thm:main2}
We assume Hypothesis $\ref{hyp}$.
Let $\phi$ be a tempered $L$-parameter of $\bfH$, and $\Pi_{\phi}^{\bfH}$ the $L$-packet of $\bfH$ for $\phi$.
Then, for every $\pi\in\Pi_{\phi}^{\bfH}$, we have
\[
\depth(\pi)=\depth(\phi).
\]
\end{thm}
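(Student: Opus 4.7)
The plan is to observe that Theorem \ref{thm:main2} follows immediately by sandwiching $\depth(\pi)$ between a lower and upper bound that both equal $\depth(\phi)$, each of which has already been established in the paper. The hard work has been done; only the combinatorial combination remains.

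First, I would invoke Proposition \ref{prop:main2}, which gives
\[
\min\bigl\{\depth(\pi') \,\big\vert\, \pi' \in \Pi_{\phi}^{\bfH}\bigr\} \geq \depth(\phi).
\]
In particular, for any fixed $\pi \in \Pi_{\phi}^{\bfH}$, we have $\depth(\pi) \geq \depth(\phi)$. Note that Proposition \ref{prop:main2} requires only $p \neq 2$, so Hypothesis \ref{hyp} is more than sufficient here.

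Next, I would invoke Theorem \ref{thm:main} (the tempered case of the main theorem), which under Hypothesis \ref{hyp} asserts
\[
\max\bigl\{\depth(\pi') \,\big\vert\, \pi' \in \Pi_{\phi}^{\bfH}\bigr\} = \depth(\phi).
\]
For the same fixed $\pi$, this yields $\depth(\pi) \leq \depth(\phi)$. Combining the two inequalities gives the desired equality $\depth(\pi) = \depth(\phi)$ for every $\pi \in \Pi_{\phi}^{\bfH}$. In particular, the depth is constant across the $L$-packet, which is a noteworthy qualitative strengthening special to the unitary case (and indeed reflects the fact that for unitary groups the semisimple descent coincides with the Langlands--Shelstad--Kottwitz transfer, which is exactly the input underlying Proposition \ref{prop:main2} via Theorem \ref{thm:MPFL}).

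There is no real obstacle at this stage: the two nontrivial inputs—the upper bound from the Ganapathy--Varma-type argument (Theorem \ref{thm:main}) and the lower bound from the positive-depth fundamental lemma (Theorem \ref{thm:MPFL}) combined with the endoscopic character relation (Proposition \ref{prop:main2})—have already been proved in Sections \ref{sec:max} and \ref{sec:min-unitary}. The only point worth double-checking is that the hypotheses align: Theorem \ref{thm:main} needs the full Hypothesis \ref{hyp} (which for case (4) requires $p > N+1$), whereas Proposition \ref{prop:main2} only needs $p \neq 2$; under Hypothesis \ref{hyp} both are simultaneously available, so the sandwich argument goes through without any additional assumption.
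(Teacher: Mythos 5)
Your proposal is correct and follows essentially the same sandwich argument as the paper, combining the lower bound from Proposition \ref{prop:main2} with an upper bound on the maximal depth; the paper cites Theorem \ref{thm:GV} directly for the upper bound rather than the derived equality in Theorem \ref{thm:main}, but this is an immaterial difference since only the $\leq$ direction is needed.
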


\begin{proof}
By combining Proposition \ref{prop:main2} and Theorem \ref{thm:GV} (\cite[Corollary 10.6.4]{MR3709003}), we get
\[
\min\bigl\{\depth(\pi) \,\big\vert\, \pi \in \Pi_{\phi}^{\bfH}\bigr\}
\geq
\depth(\phi)
\geq
\max\bigl\{\depth(\pi) \,\big\vert\, \pi \in \Pi_{\phi}^{\bfH}\bigr\}.
\]
Thus $\depth(\pi)$ is constant on $\Pi_{\phi}^{\bfH}$ and equal to $\depth(\phi)$.
\end{proof}

By the same argument as in the proof of Theorem \ref{thm:main-non-temp}, we can extend this result to nontempered $L$-parameters:

\begin{thm}\label{thm:main2-non-temp}
We assume Hypothesis $\ref{hyp}$.
Let $\phi$ be an $L$-parameter of $\bfH$, and $\Pi_{\phi}^{\bfH}$ the $L$-packet of $\bfH$ for $\phi$.
Then, for every $\pi\in\Pi_{\phi}^{\bfH}$, we have
\[
\depth(\pi)=\depth(\phi).
\]
\end{thm}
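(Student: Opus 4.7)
The plan is to reduce the nontempered case to the tempered case (Theorem \ref{thm:main2}) by following exactly the template of the proof of Theorem \ref{thm:main-non-temp}. First I would invoke the Langlands classification description of $\Pi_{\phi}^{\bfH}$ recalled in the discussion preceding Theorem \ref{thm:main-non-temp}: writing
\[
\phi=\bigoplus_{i=1}^{k}(\phi_{i}\otimes|\cdot|_{F}^{r_{i}})\oplus\phi_{0}\oplus\bigoplus_{i=1}^{k}(\phi_{i}\otimes|\cdot|_{F}^{r_{i}})^{\vee}
\]
with $\phi_{i}$ a tempered $L$-parameter of $\GL_{N_{i}}$ and $\phi_{0}$ a tempered $L$-parameter of a smaller quasi-split unitary group $\bfH_{0}$, every $\pi\in\Pi_{\phi}^{\bfH}$ arises as the unique irreducible quotient of
\[
\nInd_{P}^{H}\bigl((\pi_{1}\otimes|\det(\cdot)|^{r_{1}})\boxtimes\cdots\boxtimes(\pi_{k}\otimes|\det(\cdot)|^{r_{k}})\boxtimes\pi_{0}\bigr),
\]
where $\pi_{i}$ corresponds to $\phi_{i}$ under the local Langlands correspondence for $\GL_{N_{i}}$ and $\pi_{0}\in\Pi_{\phi_{0}}^{\bfH_{0}}$.

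Next I would use Proposition \ref{prop:parab} (compatibility of depth with parabolic induction) together with the fact that twisting by $|\det(\cdot)|^{r_{i}}$ does not change the depth, exactly as in the proof of Theorem \ref{thm:main-non-temp}. This gives
\[
\depth(\pi)=\max\{\depth(\pi_{1}),\ldots,\depth(\pi_{k}),\depth(\pi_{0})\}.
\]
Now I would apply Theorem \ref{thm:GL} to each $\pi_{i}$ to get $\depth(\pi_{i})=\depth(\phi_{i})$, and, crucially, apply the tempered version Theorem \ref{thm:main2} (which is available since $\bfH_{0}$ is a quasi-split unitary group of smaller size and Hypothesis \ref{hyp} is preserved under passing to $\bfH_{0}$) to conclude $\depth(\pi_{0})=\depth(\phi_{0})$ for every $\pi_{0}\in\Pi_{\phi_{0}}^{\bfH_{0}}$. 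The latter point is the key input that distinguishes this theorem from Theorem \ref{thm:main-non-temp}: here the tempered depth is \emph{constant} across the $L$-packet, not merely bounded by the maximum, so the same constancy automatically propagates through the parabolic induction formula.

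Assembling these identifications yields
\[
\depth(\pi)=\max\{\depth(\phi_{1}),\ldots,\depth(\phi_{k}),\depth(\phi_{0})\},
\]
and this maximum equals $\depth(\phi)$ because the upper ramification filtration acts diagonally on the direct sum decomposition of $\phi$ and the twist by $|\cdot|_{F}^{r_{i}}$, being unramified, does not affect the depth of $\phi_{i}$. Since the right-hand side depends only on $\phi$ and not on the choice of $\pi_{0}\in\Pi_{\phi_{0}}^{\bfH_{0}}$, the depth is constant on $\Pi_{\phi}^{\bfH}$ and equal to $\depth(\phi)$, proving the theorem. There is no real obstacle here beyond bookkeeping; the only point requiring minor care is checking that Hypothesis \ref{hyp} for $\bfH$ implies the same hypothesis for the smaller group $\bfH_{0}$, which is immediate from the inequality $N_{0}\leq N$ in each of the four cases.
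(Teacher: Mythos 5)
Your proposal is correct and follows essentially the same route the paper takes: the paper proves this theorem by remarking that the argument of Theorem~\ref{thm:main-non-temp} (Langlands classification, Proposition~\ref{prop:parab}, and the tempered input Theorem~\ref{thm:GL}) carries over verbatim once Theorem~\ref{thm:main} is replaced by its constant-depth unitary counterpart Theorem~\ref{thm:main2}. Your additional observations — that constancy on the tempered piece propagates through the induction formula, and that Hypothesis~\ref{hyp} persists for $\bfH_{0}$ since $N_{0}\leq N$ — are both accurate and are exactly the minor checks implicit in the paper's one-line proof.
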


\end{document}